\documentclass[12pt, reqno]{amsart}
\usepackage{amscd,amsmath,amsthm,amssymb,graphics}
\usepackage{amsfonts,amssymb,amscd,amsmath,enumitem,verbatim}
\usepackage[a4paper,top=3cm,left=3cm,right=3cm]{geometry}
\usepackage{xcolor}
\usepackage{float}
\theoremstyle{plain}
\usepackage{booktabs}
\usepackage{hyperref}
\newtheorem{Theorem}{Theorem}

\newtheorem{Proposition}[Theorem]{Proposition}

\newtheorem{Conjecture}[Theorem]{Conjecture}

\newtheorem{Definition}[Theorem]{Definition}
\newtheorem{Remark}[Theorem]{Remark}

\newtheorem{Example}[Theorem]{Example}
\newtheorem{Assumption}[Theorem]{Assumption}

\title{Real convergence and periodicity of $p$--adic continued fractions}
\author{Giuliano Romeo}
\subjclass[2010]{11J70; 11D88; 11Y65; 12J25}
\keywords{Continued fractions; $p$-adic numbers, convergence, periodicity}

\begin{document}

\maketitle

\begin{center}
Department of Mathematical Sciences Giuseppe Luigi Lagrange,\\
Politecnico di Torino, Corso Duca degli Abruzzi 24, Torino
\ \\ \ \\
giuliano.romeo@polito.it
\end{center}

\begin{abstract}
Continued fractions have been generalized over the field of $p$--adic numbers, where it is still not known an analogue of the famous Lagrange's Theorem. In general, the periodicity of $p$--adic continued fractions is well studied and addressed as a hard problem. In this paper, we show a strong connection between periodic $p$--adic continued fractions and the convergence to real quadratic irrationals. In particular, in the first part we prove that the convergence in $\mathbb{R}$ is a necessary condition for the periodicity of the continued fraction of a quadratic irrational in $\mathbb{Q}_p$. Moreover, we leave several conjectures on the converse, supported by experimental computations. In the second part of the paper, we exploit these results to develop a probabilistic argument for the non-periodicity of Browkin's $p$-adic continued fractions. The probabilistic results are conditioned under the assumption of uniform distribution of the $p$--adic digits of a quadratic irrational, that holds for almost all $p$--adic numbers.
\end{abstract}

\section{Introduction}
Continued fractions are expressions of the form
\begin{equation}
a_0+\cfrac{1}{a_1+\cfrac{1}{a_2+\ddots}},
\end{equation}
and they are a very classical and powerful tool in number theory. They have been studied and used throughout the centuries in all areas of mathematics, mainly due to their excellent properties of approximation. Continued fractions have been employed, for example, to prove the irrationality of $\pi$ \cite{LAM}, to construct transcendental numbers \cite{AB2,LIO}, to find indecomposable algebraic integers in quadratic number fields \cite{DS} and for  the factorization of integers \cite{LP}. To have a fairly complete introduction on the classical theory of continued fractions, see \cite{KHI, OLDS, WALL}. In the field of real numbers, continued fractions completely characterize rational numbers and quadratic irrationals. A real number $\alpha\in\mathbb{R}$ is rational if and only if its continued fraction is finite. The latter is an easy consequence of the finiteness of the Euclidean division algorithm in $\mathbb{Z}$. Moreover, $\alpha\in\mathbb{R}$ is a quadratic irrational, i.e. a root of an irreducible rational polynomial of degree $2$, if and only if its continued fraction is eventually periodic. The latter is the famous Lagrange's Theorem \cite{LAG}. Because of their optimal properties, continued fractions are highly studied in mathematics and there exist several generalizations. The topic of this paper is the study of continued fractions in the field of $p$--adic numbers $\mathbb{Q}_p$, introduced by Mahler in \cite{MAH}. See \cite{R} for a recent survey on the theory of $p$--adic continued fractions. The problem of defining a continued fraction algorithm in $\mathbb{Q}_p$ sharing all the optimal properties enjoyed by classical continued fractions is still open. In particular, it is not known an algorithm that provides an eventually periodic continued fraction for every quadratic irrational in $\mathbb{Q}_p$. Hence, $p$--adic continued fractions miss, up to now, an analogue of Lagrange's Theorem for classical continued fractions.\smallskip

The first definitions date back around the 1970s by Ruban \cite{RUB}, Schneider \cite{SCH} and Browkin \cite{BI}. The reason of the non-existence of a unique standard algorithm is that there is not a unique choice for the floor function of a $p$--adic number. The choices of Ruban, Schneider and Browkin are all meaningful with respect to different points of view. However, Ruban's and Schneider's continued fractions are not finite for every rational number, but they can also be infinite periodic  \cite{BUN,LAO}. Moreover, these two algorithms do not provide a periodic continued fraction for every $p$--adic quadratic irrational. In fact, the analogues of Lagrange's Theorem have been disproved in \cite{TIL} for Schneider's algorithm and in \cite{CVZ} for Ruban's algorithm. In 1978 and 2000, Browkin \cite{BI,BII} came out with the definition of two $p$--adic continued fractions algorithms (we call them \textit{Browkin I} and \textit{Browkin II}) that terminate if and only if the input is a rational number. The proof of finiteness for \textit{Browkin I} is contained in Browkin's first paper \cite{BI}, while for \textit{Browkin II} was conjectured by Browkin in \cite{BII} and proved later on by Barbero, Cerruti and Murru in \cite{BCMI}. Browkin's first algorithm is really similar to Ruban's one, but it allows to have also negative numbers as partial quotients. In fact, Browkin's choice is to take, in the $p$--adic expansion of an element of $\mathbb{Q}_p$, the representatives modulo $p$ in $\{-\frac{p-1}{2},\ldots,\frac{p-1}{2}\}$ instead of $\{0,\ldots,p-1\}$. We will refer to these kind of algorithms, choosing representatives in the symmetric interval $\{-\frac{p-1}{2},\ldots,\frac{p-1}{2}\}$, as \textit{Browkin-type} algorithms.\smallskip

The problem of understanding if \textit{Browkin-type} continued fractions are always periodic for $p$--adic quadratic irrationals or not is still open, and it is usually addressed as a hard problem. The main difference with Ruban's and Schneider's cases is the presence of negative partial quotients in the continued fraction expansion (see Section \ref{Sec: generalremarks} for more details). The periodicity of \textit{Browkin I} has been studied by Bedocchi in \cite{BEI,BEII} and by Capuano, Murru and Terracini in \cite{CMT}. The periodicity of \textit{Browkin II} has been deepened in \cite{BCMI,MRSI}. In particular, \textit{Browkin II} computationally appears to outperform \textit{Browkin I} in the number of periodic continued fractions provided for quadratic irrational numbers. Moreover, the convergence conditions proved in \cite{MRSII} allowed to define a new algorithm in \cite{MR}, that is introduced in Section \ref{Sec: preliminari} as Algorithm \eqref{Alg: MR}. The algorithm defined in \cite{MR} is similar to \textit{Browkin II} but it improves its periodicity properties from both theoretical and computational points of view. Other algorithms have been defined and studied in \cite{BCMII, DW} for standard $p$--adic continued fractions and in \cite{MT1,MT2,MT3} for the multidimensional case. See also \cite{MULA,CMT2} for other generalizations of $p$--adic continued fractions. All these algorithms improve and generalize the known algorithms for $p$--adic continued fractions from several different points of view.\bigskip

The main object of study of the present paper is the periodicity of the \textit{Browkin-type} algorithms defined in \cite{BI,BII,MR} and introduced in the next section, i.e. \textit{Browkin I}, \textit{Browkin II} and Algorithm \eqref{Alg: MR}. Although Lagrange's Theorem has not been proved or disproved for any these three algorithms, it is widely believed to fail (see, for example, the computational analysis performed in \cite{MR}). However, as we discuss in Section \ref{Sec: generalremarks}, it does not exist at the present time an argument to deal efficiently with the periodicity of \textit{Browkin-type} continued fractions. The main reason is that the argument carried out in the pioneering work of Capuano, Veneziano and Zannier \cite{CVZ} for the periodicity of Ruban's continued fractions, is specific for continued fractions with positive partial quotients. For these reasons, at the present time, no quadratic irrational has been proved to have non-periodic $p$--adic continued fraction with a \textit{Browkin-type} algorithm.\bigskip

In this paper, we explore a different and novel argument for the non-periodicity of \textit{Browkin-type} continued fractions. In particular, we highlight a strong connection between the periodicity of a $p$--adic continued fraction and the convergence in $\mathbb{R}$ to a real quadratic irrational. The core result and the starting point for our analysis is the following necessary condition for the periodicity.
\begin{Proposition}\label{Pro: Conv}
Let $\alpha$ be a quadratic irrational that has both a real image $\alpha^{(r)}$ and a $p$--adic image $\alpha^{(p)}$. If the $p$--adic continued fraction $[a_0,a_1,\ldots]$ of $\alpha^{(p)}$ obtained with \textit{Browkin I}, \textit{Browkin II} or Algorithm \eqref{Alg: MR} is eventually periodic, then $[a_0,a_1,\ldots]$ converges in $\mathbb{R}$ to either $\alpha^{(r)}$ or $\overline{\alpha}^{(r)}$ .
\end{Proposition}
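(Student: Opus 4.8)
The plan is to pass to the matrix formalism for the convergents and to read off both the real and the $p$-adic behaviour from the spectrum of a single rational matrix. I would write $M_i = \left(\begin{smallmatrix} a_i & 1 \\ 1 & 0 \end{smallmatrix}\right)$, so that $\left(\begin{smallmatrix} P_n & P_{n-1} \\ Q_n & Q_{n-1}\end{smallmatrix}\right) = M_0 M_1 \cdots M_n$ and the convergent $P_n/Q_n$ is the image of $\infty \in \mathbb{P}^1$ under the attached M\"obius map. Since every partial quotient of a \emph{Browkin-type} algorithm lies in $\mathbb{Z}[1/p] \subset \mathbb{Q}$, all these matrices have rational entries. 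Assuming the expansion is eventually periodic, with pre-period $a_0, \ldots, a_{k-1}$ and period $a_k, \ldots, a_{k+\ell-1}$, I set $A = M_0 \cdots M_{k-1}$ and let $M = M_k \cdots M_{k+\ell-1}$ be the period matrix; then the convergent of index $k + m\ell + j$ is obtained by applying $A\,M^{m}\,(M_k\cdots M_{k+j-1})$ to $\infty$. Both $A$ and $M$ are rational, and $\det M = (-1)^\ell = \pm 1$. Because the $p$-adic convergents tend to $\alpha^{(p)}$ by construction, the rational-quadratic value of the periodic fraction is the algebraic number $\alpha$ itself, so its two archimedean realisations are exactly $\alpha^{(r)}$ and $\overline{\alpha}^{(r)}$.

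Next I would analyse the eigenvalues $\lambda_1, \lambda_2$ of $M$, the roots of $x^2 - (\operatorname{tr} M)x + \det M$, and compare them with the fixed points of the induced M\"obius map, which are the purely periodic value $\gamma$ and its conjugate $\overline{\gamma}$, i.e. the roots of $M_{21}x^2 + (M_{22}-M_{11})x - M_{12}$. A direct computation shows that both quadratics share the discriminant $(M_{11}-M_{22})^2 + 4 M_{12}M_{21}$. Since $\alpha$, hence $\gamma = A^{-1}(\alpha)$, is a genuine quadratic irrational, its real images $\gamma^{(r)} \ne \overline{\gamma}^{(r)}$ are distinct, so this common discriminant is strictly positive. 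Consequently $\lambda_1, \lambda_2$ are real and distinct, and $\lambda_1 \lambda_2 = \det M = \pm 1$.

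Real, distinct eigenvalues with product $\pm 1$ automatically have distinct absolute values, with the single exception of the pair $\{1, -1\}$ (which forces $\det M = -1$, $\operatorname{tr} M = 0$). This exceptional case is where I expect the only genuine difficulty to lie, and it is exactly where the hypothesis of $p$-adic convergence becomes indispensable: if $\{\lambda_1, \lambda_2\} = \{1, -1\}$, then $M$ acts on $\mathbb{P}^1$ as an involution, its powers cycle with period two, and the convergents cluster on finitely many rational values, so they cannot converge $p$-adically to the irrational $\alpha^{(p)}$, a contradiction. (Equivalently, $\pm 1$ are $p$-adic units, so $M$ has no dominant eigenvalue over $\mathbb{Q}_p$ and the $p$-adic iteration cannot converge.) Having excluded this case, I conclude $|\lambda_1| \ne |\lambda_2|$ in $\mathbb{R}$.

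Finally, with a strictly dominant real eigenvalue the standard power-iteration principle applies: for any $w \in \mathbb{P}^1(\mathbb{R})$ other than the repelling fixed point, $M^m(w)$ converges to the attracting fixed point $\gamma_{\mathrm{dom}}^{(r)} \in \{\gamma^{(r)}, \overline{\gamma}^{(r)}\}$. The intermediate points $w_j = (M_k\cdots M_{k+j-1})(\infty)$ are rational, hence distinct from the irrational fixed points $\gamma^{(r)}, \overline{\gamma}^{(r)}$, so $M^m(w_j) \to \gamma_{\mathrm{dom}}^{(r)}$ for every $j \in \{0,\ldots,\ell-1\}$. Applying the fixed rational, hence $\mathbb{P}^1(\mathbb{R})$-continuous, transformation $A$, every subsequence $P_{k+m\ell+j}/Q_{k+m\ell+j} = A\,M^m(w_j)$ tends to the common limit $A(\gamma_{\mathrm{dom}}^{(r)})$; since $A(\gamma^{(r)}) = \alpha^{(r)}$ and $A(\overline{\gamma}^{(r)}) = \overline{\alpha}^{(r)}$, the full sequence of real convergents converges to $\alpha^{(r)}$ or $\overline{\alpha}^{(r)}$, as claimed.
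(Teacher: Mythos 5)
Your argument is correct, and it takes a genuinely different route from the paper's proof, which disposes of the statement in a few lines by invoking Wall's convergence criterion for periodic continued fractions with complex partial quotients (Theorem 8.1 of \cite{WALL}, restated as Theorem \ref{Thm: Wall}) and asserting that its hypotheses are met. What you have done, in effect, is to reprove the relevant case of Wall's theorem from scratch via the period matrix $M$ and power iteration, and --- more importantly --- to supply the verification that the paper leaves implicit: Wall's criterion requires either a repeated root or a strict inequality singling out an attracting fixed point, and this can fail precisely when the two eigenvalues of $M$ have equal modulus. Your observation that the real embedding forces the common discriminant to be positive (so the eigenvalues are real and distinct), and that the residual case $\{\lambda_1,\lambda_2\}=\{1,-1\}$ is incompatible with $p$--adic convergence to an irrational because $M$ would then act as an involution, is exactly the content hidden in the phrase ``the hypotheses of Theorem \ref{Thm: Wall} are satisfied''. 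The same goes for your remark that the intermediate points $w_j$ are rational and therefore cannot equal the irrational repelling fixed point, which is Wall's condition $\frac{A_n}{B_n}\neq\alpha_2$. So your proof buys a self-contained argument that makes visible where the $p$--adic hypothesis is actually used, at the cost of length. One small loose end: the deduction ``$\gamma^{(r)}\neq\overline{\gamma}^{(r)}$ implies the common discriminant $(M_{11}-M_{22})^2+4M_{12}M_{21}$ is positive'' presupposes that the fixed-point equation is a genuine quadratic, i.e. $M_{21}\neq 0$; otherwise (in particular when $M$ is scalar) the discriminant can vanish. This is dispatched by the same device you already use for the involution case: if $M_{21}=0$ then either $\gamma$ satisfies a nontrivial linear equation over $\mathbb{Q}$, contradicting its irrationality, or $M=\pm I$ and the convergents cycle through finitely many rational values, contradicting $p$--adic convergence to $\alpha^{(p)}$. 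With that sentence added, the proof is complete.
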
 

Proposition \ref{Pro: Conv} can be proved by using a result for the convergence of generalized continued fractions with complex partial quotients \cite[Theorem 8.1]{WALL}, based on the results of Lane \cite{LAN}, and it is discussed more thoroughly in Section \ref{Sec: realconve}. The idea is that, by requiring the periodicity, the continued fraction is a root of a polynomial of degree $2$ over $\mathbb{Q}$, and this is true both embedding the continued fraction in $\mathbb{Q}_p$ and in $\mathbb{R}$. Moreover, Proposition \ref{Pro: Conv} was also conjectured in a short note by Moore, that is not anymore available online.\bigskip

In Section \ref{Sec: generalremarks}, we highlight as working by embedding $p$--adic continued fractions into the field of real numbers has been, up to now, one of the most useful techniques to obtain effective results. In fact, it is the core idea of the nice characterization of the periodicity of Ruban's continued fractions proved in \cite{CVZ}. Also the strategy for the proof of finiteness of \textit{Browkin I} for all rational numbers, carried out in \cite{BI}, exploits an inequality for the Euclidean absolute value of the partial quotients. Moreover, it is also the underlying idea of the algorithms defined in \cite{DW}, producing periodic $p$--adic continued fractions of quadratic irrationals for small values of $p$.\bigskip

From the experimental computations, it seems that the convergence in $\mathbb{R}$ to either $\alpha^{(r)}$ or $\overline{\alpha}^{(r)}$ of the $p$--adic continued fraction of $\alpha^{(p)}$ is also a sufficient condition for periodicity, i.e. also the converse of Proposition \ref{Pro: Conv} is true. In fact, in all the cases where the continued fraction $[a_0,a_1,\ldots]$ of a quadratic irrational $\alpha^{(p)}$ seems to be non-periodic, as a period is not detected for many steps, then $[a_0,a_1,\ldots]$ seems converging in $\mathbb{R}$ very neatly to a limit different from $\alpha^{(r)}$ (see the computational analysis performed in Section \ref{Sec: computations}). For this reason, we leave the following conjecture.

\begin{Conjecture}\label{Conj: realconv}
Let $[a_0,a_1,\ldots]$ be the \textit{Browkin I}, \textit{Browkin II} or Algorithm \eqref{Alg: MR} continued fraction of a $p$-adic quadratic irrational $\alpha^{(p)}$ that can be embedded in the real numbers. Let us call $\alpha^{(r)}$ the image of $\alpha^{(p)}$ in $\mathbb{R}$. Then, the continued fraction $[a_0,a_1,\ldots]$ is eventually periodic if and only if it converges with respect to the Euclidean absolute value to either $\alpha^{(r)}$ or $\overline{\alpha}^{(r)}$.
\end{Conjecture}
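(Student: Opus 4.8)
The plan is to split the biconditional into its two implications and handle them by completely different means. The forward implication --- periodicity implies real convergence to $\alpha^{(r)}$ or $\overline{\alpha}^{(r)}$ --- is exactly Proposition \ref{Pro: Conv}, so there is nothing new to do: an eventually periodic expansion is a root of a quadratic over $\mathbb{Q}$ whose coefficients are read off from a period of convergents, this same quadratic is forced on the $p$-adic limit $\alpha^{(p)}$ and on the real limit (the latter existing by \cite[Theorem 8.1]{WALL}), and since the only quadratic irrationals annihilated by that polynomial are $\alpha^{(r)}$ and $\overline{\alpha}^{(r)}$, the real limit must be one of the two. The whole content of Conjecture \ref{Conj: realconv} therefore lies in the converse, and I would concentrate the effort there.

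For the converse I would attempt a Lagrange-style finiteness argument adapted to the Browkin setting. Let $\alpha_n$ denote the $n$-th complete quotient; it is obtained from $\alpha$ by the M\"obius transformation $\begin{pmatrix} p_{n-1} & p_{n-2} \\ q_{n-1} & q_{n-2}\end{pmatrix}$, of determinant $\pm 1$, so $\alpha_n$ is again a quadratic irrational with the same discriminant as $\alpha$ up to rational squares. Clearing denominators, write $A_n x^2 + B_n x + C_n$ for a primitive integral polynomial vanishing at $\alpha_n$. Eventual periodicity is equivalent to the sequence $(\alpha_n)$ taking only finitely many values, and this would follow by the pigeonhole principle from a uniform bound on the heights $\max(|A_n|,|B_n|,|C_n|)$.

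The algorithm already controls the $p$-adic side: the digits $a_n$ lie in the symmetric interval and $|\alpha_n|_p > 1$, which pins down $\alpha_n$ $p$-adically. The new ingredient must be to convert the hypothesis of real convergence into a matching control of the \emph{real} images $\alpha_n^{(r)}$ and their conjugates $\overline{\alpha_n^{(r)}}$, forcing them into a bounded region of $\mathbb{R}$ --- the analogue of the classical ``reduced'' inequalities $\alpha_n>1$, $-1<\overline{\alpha_n}<0$ that make Lagrange's Theorem work. A bound at the real place together with the $p$-adic bound would then be combined, place by place, into the desired height bound on $A_n, B_n, C_n$, closing the pigeonhole argument.

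The hard part --- and the reason this is stated as a conjecture --- is exactly this conversion. Unlike the positive-partial-quotient setting of \cite{CVZ}, the Browkin digits change sign and are unbounded in Euclidean absolute value, and the convergents $p_n, q_n$ carry denominators that are growing powers of $p$; consequently the primitive integral coefficients $A_n, B_n, C_n$ can a priori have height growing with $n$. Real convergence supplies only the \emph{asymptotic} statement $p_n/q_n \to \alpha^{(r)}$, whereas the finiteness argument needs a \emph{uniform}, non-asymptotic height bound valid for every $n$. Bridging an asymptotic convergence statement in $\mathbb{R}$ to such a uniform algebraic-height bound --- presumably by tying an effective rate of the real convergence to the $p$-adic valuations of the $q_n$, or by a product-formula coupling of the real and $p$-adic absolute values of the $\alpha_n$ --- is precisely the gap that no current technique closes.
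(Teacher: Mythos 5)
You have not proved the statement, and neither does the paper: this is Conjecture~\ref{Conj: realconv}, left open by the author, so there is no ``paper proof'' to match. Your treatment of the forward implication is exactly the paper's Proposition~\ref{Pro: Conv} (periodicity forces a quadratic relation, Theorem~8.1 of \cite{WALL} gives real convergence, and the real limit must be a root of the same minimal polynomial), so that half is fine and adds nothing beyond what the paper already establishes. For the converse you offer a programme, not an argument, and you say so yourself; the honest flagging of the gap is correct, but it means the proposal cannot be read as a proof of the biconditional.

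It is worth recording how your programme relates to what the paper actually does. Your plan --- reduce eventual periodicity to finiteness of the set of complete quotients $\alpha_n$, and seek a uniform height bound on primitive integral polynomials $A_nx^2+B_nx+C_n$ annihilating them --- is essentially the Lagrange/CVZ strategy that Section~\ref{Sec: generalremarks} of the paper explains \emph{fails} for Browkin-type algorithms: with positive partial quotients one gets $\alpha_n\overline{\alpha}_n=-A_{k-1}/B_k<0$ and hence bounds on $P_n,Q_n$, but sign changes in the Browkin digits destroy this, and the paper's Remark notes that even a weakened bound of the form $A_{k-1}/B_k>-K$ is not available. Your diagnosis that real convergence only gives asymptotic information $A_n/B_n\to\alpha^{(r)}$ while the pigeonhole needs uniform non-asymptotic bounds is a fair and concrete statement of the obstruction. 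The paper, by contrast, supports the converse not by any Diophantine argument but by the computations of Section~\ref{Sec: computations} and by the probabilistic heuristic of Section~\ref{Sec: prob} (under Assumption~\ref{Ass: equi}, $\mathbb{E}(|a|)$ grows linearly in $p$, so $|B_n|$ should grow exponentially and the real convergents should ``freeze'' away from $\alpha^{(r)}$ in the non-periodic case). Neither route closes the gap; if you intend to present this as a proof of the conjecture, you must either supply the missing uniform bound at the real place or reframe the write-up as a proof of the forward direction only.
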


In the next sections, we provide several examples in support of Conjecture \ref{Conj: realconv}. After many steps, the continued fraction of a quadratic irrational $\alpha^{(p)}$ tends to oscillate less and less around a real limit, and the latter seems to be, in the cases where no periodicity is observed, really far from $\alpha^{(r)}$ and $\overline{\alpha}^{(r)}$. Moreover, a big deviation between consecutive convergents must correspond to small values of the partial quotients $a_n$. Small values of the partial quotients are very unlikely, especially for large values of the prime $p$. In fact, it is widely believed that the digits of the $p$--adic expansions of irrational numbers are uniformly distributed in $\mathbb{Z}/p\mathbb{Z}$. Therefore, in Section \ref{Sec: prob}, we develop a probabilistic argument for the non-periodicity of $p$--adic continued fractions obtained with \textit{Browkin I}, and similar arguments can be applied for \textit{Browkin II} and Algorithm \eqref{Alg: MR}. The probabilistic results are conditioned under the following assumption.

\begin{Assumption}\label{Ass: equi}
Let us consider $\alpha=\sum\limits_{n=r}^{+\infty}c_n p^n$ be a random $p$--adic quadratic irrational. Then, for all $n\geq r$ and for all $k\in\{0,\ldots,p-1\}$,
\[\mathbb{P}(c_n=k)=\frac{1}{p},\]
with respect to the Haar measure. In other words, the coefficients of the $p$-adic expansion of $\alpha$ are uniformly distributed in the set $\{0,\ldots,p-1\}$.
\end{Assumption}

The reasonability of Assumption \ref{Ass: equi} is based on Borel's normal number theorem \cite{BOR}. In 1909, Borel proved that \textit{almost all} real numbers are normal, i.e. the set of non-normal numbers has Lebesgue measure $0$ in $\mathbb{R}$. A real number $\alpha$ is (absolutely) normal if, for any integer $b\geq 2$, every digit of its expansion in base $b$ has density $\frac{1}{b}$. It is known that Borel's argument, exploiting Borel-Cantelli Lemma, can be applied equivalently to the $p$--adic expansion of $p$--adic numbers. Therefore, almost all $p$--adic numbers are normal. In particular, all irrational numbers, both in $\mathbb{R}$ and in $\mathbb{Q}_p$, are conjectured to be normal. However, apart from Borel's existence result, only few numbers have been proved to be normal. For example, there is not a proof that $\sqrt{2}$, $\pi$ or $e$ are normal, although it is strongly considered true.\bigskip

In Theorem \ref{Thm: expvalue} of Section \ref{Sec: prob}, under Assumption \ref{Ass: equi}, we prove that the expected value of any \textit{Browkin I} partial quotient $a$ is
\begin{equation}\label{Eq: Ea}
\mathbb{E}(|a|)=\frac{p}{4}\left(1-\frac{1}{p^2(p^2+p+1)}\right),
\end{equation}
therefore it grows linearly with the prime $p$. This means that the convergents are really unlikely to deviate after several steps, especially for large $p$. For instance, see Example \ref{Exa: deviation} at the end of Section \ref{Sec: prob}, where the $7823$--adic continued fraction of $\sqrt{15648}$ has the convergent
\begin{equation}\label{Eq: A10000}
\frac{A_{10000}}{B_{10000}}\approx 3337.
\end{equation}

If its \textit{Browkin I} expansion became periodic, then, by Proposition \ref{Pro: Conv}, it should be convergent to $\sqrt{15648}\approx 125$. However, it is really unlikely for the subsequent convergents to deviate from \eqref{Eq: A10000}, as $\mathbb{E}(|a_n|)\approx 1955.75$.\bigskip

Notice that the growth of the denominators $|B_n|$ tends to be exponential, as the main term is $|a_0a_1\ldots a_n|$. Therefore, we leave the following conjecture.

\begin{Conjecture}
Let $\{|B_n|\}_{n\in\mathbb{N}}$ be the sequence of denominators of convergents for \textit{Browkin I}, \textit{Browkin II} and Algorithm \eqref{Alg: MR} $p$-adic continued fractions, where $p\geq 5$. Then $|B_n|$ tends to $+\infty$ with probability $1$.
\end{Conjecture}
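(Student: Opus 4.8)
The plan is to reduce the statement to a law of large numbers for the logarithms of the partial quotients. Recall that the denominators obey the usual recurrence $B_n = a_nB_{n-1}+B_{n-2}$ with $B_{-1}=0,\ B_0=1$, an identity that holds simultaneously in $\mathbb{Q}_p$ and in $\mathbb{R}$ because the $a_k$ are rational. Since every partial quotient with $k\geq 1$ satisfies $v_p(a_k)=v_p(\alpha_k)\leq -1$, an easy induction shows that the leading $p$-adic term never cancels in the recurrence, so $v_p(B_n)=\sum_{k=1}^n v_p(a_k)\leq -n$; in particular the $B_n$ are rationals whose denominators are exact powers of $p$. This clean $p$-adic behaviour is, however, not enough on its own: writing $B_n=N_n/p^{m_n}$ with $p\nmid N_n$ and $m_n=-v_p(B_n)$, the claim $|B_n|\to+\infty$ is equivalent to $|N_n|\gg p^{m_n}$, and a rational with large $p$-power denominator can still be small in the Euclidean absolute value. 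Hence the real growth must be extracted from the real recurrence itself.

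The heart of the argument is the assertion, alluded to in the text, that the main term of $B_n$ is $a_1a_2\cdots a_n$, i.e.
\[
\log|B_n|=\sum_{k=1}^n\log|a_k|+o(n).
\]
Granting this, I would invoke Assumption \ref{Ass: equi}: modelling the sequence of partial quotients as (asymptotically) stationary and ergodic, the strong law of large numbers gives $\frac1n\sum_{k=1}^n\log|a_k|\to\mathbb{E}(\log|a|)$ almost surely, whence $\frac1n\log|B_n|\to\mathbb{E}(\log|a|)$. The statement then follows once we check that this Lyapunov-type exponent is strictly positive. This is the decisive computation: using the distribution of a Browkin partial quotient (the valuation $j=-v_p(\alpha)\geq 1$ is geometric with $\mathbb{P}(j=m)=(p-1)/p^{m}$ and, given $j$, the digits are uniform in the symmetric interval), one evaluates $\mathbb{E}(\log|a|)$ and shows it is positive precisely for $p\geq 5$. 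Heuristically the positive contribution is governed by the geometric mean of $|c_0|$ over $\{1,\dots,(p-1)/2\}$, while the negative contribution, coming from the event $c_0=0$ which forces $|a|<1$, is suppressed by the factor $1/p$ and remains integrable; the two balance near the threshold $p=5$, and indeed for $p=3$ the expectation is $\leq 0$, which explains the hypothesis $p\geq 5$. The same computation carries over to \textit{Browkin II} and Algorithm \eqref{Alg: MR}, since in all three cases the digits lie in the symmetric interval and $\mathbb{E}(|a|)$ grows like $p/4$ as in \eqref{Eq: Ea}.

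The main obstacle is the justification of the displayed asymptotic $\log|B_n|=\sum_{k=1}^n\log|a_k|+o(n)$, which is exactly where the two absolute values part company. Setting $\beta_n=B_n/(a_1\cdots a_n)$ turns the recurrence into $\beta_n=\beta_{n-1}+\beta_{n-2}/(a_{n-1}a_n)$; the correction is negligible on the events where $|a_{n-1}a_n|$ is large, but because $|a_k|$ can occasionally be as small as $p^{-m}$ there is genuine cancellation between $a_nB_{n-1}$ and $B_{n-2}$ in the real recurrence, unlike the $p$-adic one. Controlling these rare large fluctuations so that $\log|\beta_n|=o(n)$ is the delicate point. The clean way to make this rigorous is to pass to the random matrix products $M_k=\begin{pmatrix} a_k & 1 \\ 1 & 0\end{pmatrix}$, for which $(B_n,B_{n-1})^{\mathsf{T}}=M_n\cdots M_1(1,0)^{\mathsf{T}}$: since $\mathbb{E}\log^{+}\|M_k\|<\infty$ by \eqref{Eq: Ea}, the Furstenberg--Kesten theorem yields $\frac1n\log\|M_n\cdots M_1\|\to\lambda$ almost surely, Furstenberg's positivity criterion (the $M_k$ generate a non-compact, strongly irreducible subgroup of $\mathrm{GL}_2(\mathbb{R})$) gives $\lambda>0$, and the Oseledets theorem guarantees that the fixed initial vector $(1,0)^{\mathsf{T}}$ almost surely avoids the contracting direction, so that $|B_n|$ itself, and not merely the matrix norm, grows exponentially. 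The remaining, more technical, gap is that Assumption \ref{Ass: equi} concerns the digits of the fixed irrational $\alpha$, whereas the probabilistic arguments above require stationarity and ergodicity of the induced sequence of partial quotients; bridging this would amount to establishing a Gauss--Kuzmin type ergodic theory for the $p$-adic Browkin map, which is itself open and is really the source of the conjectural status of the statement.
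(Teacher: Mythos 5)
The statement you are trying to prove is left in the paper as a conjecture: the author offers no proof, only the heuristic that the dominant term of $B_n$ is $a_1a_2\cdots a_n$ together with the expected-value computation $\mathbb{E}(|a|)=\frac{p}{4}\bigl(1-\frac{1}{p^2+p+1}\bigr)$ of Theorem \ref{Thm: expvalue}. Your proposal goes considerably further than the paper in structuring an attack --- the reduction to a Lyapunov exponent for the random matrix products $M_k=\begin{pmatrix} a_k & 1\\ 1 & 0\end{pmatrix}$ via Furstenberg--Kesten and Oseledets is the right framework and is more robust than the naive expansion $\log|B_n|=\sum_k\log|a_k|+o(n)$ --- but it is not a proof, and you correctly flag the two places where it breaks down.

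Beyond the gaps you acknowledge, one step you treat as routine is not: you assert that ``one evaluates $\mathbb{E}(\log|a|)$ and shows it is positive precisely for $p\geq 5$,'' but this computation is nowhere performed, and it cannot be read off from \eqref{Eq: Ea}. By Jensen's inequality $\mathbb{E}(\log|a|)\leq\log\mathbb{E}(|a|)$, so the paper's result $\mathbb{E}(|a|)>1$ for $p\geq 5$ gives no lower bound on the geometric mean; the events $c_0=0$ (probability $1/p$, forcing $|a|<1$ and contributing $\log|a|$ as negative as $-v_p(a)\log p$) could in principle drag $\mathbb{E}(\log|a|)$ below zero even when the arithmetic mean is large. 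Establishing $\mathbb{E}(\log|a|)>0$, or more precisely the positivity of the top Lyapunov exponent restricted to the direction $(1,0)^{\mathsf T}$, is the decisive quantitative step and is missing; the claim that the threshold is exactly $p\geq 5$ is likewise unsubstantiated. Two smaller points: your $p$-adic bound $v_p(B_n)\leq -n$ holds for \textit{Browkin I} but not for \textit{Browkin II} or Algorithm \eqref{Alg: MR}, where the even-indexed partial quotients have valuation $0$ (one only gets roughly $v_p(B_n)\leq -n/2$); and Furstenberg's positivity criterion presupposes an i.i.d.\ (or stationary ergodic) model for the $M_k$, which is precisely the Gauss--Kuzmin-type statement you identify at the end as open --- Assumption \ref{Ass: equi} concerns the digits of the fixed input $\alpha$, not the joint distribution of the complete quotients $\alpha_n$, so the passage from one to the other is itself a conjecture. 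In short: your strategy is sound and sharper than the paper's heuristic, but the statement remains conjectural under your argument for the same essential reasons it does in the paper.
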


\section{Preliminaries and main algorithms}\label{Sec: preliminari}
In this section we recall some basic facts from the theory of continued fractions and $p$--adic numbers. For more results and background we refer the reader to \cite{OLDS, WALL} for continued fractions and to \cite{FG} for $p$--adic numbers.\bigskip

In the following, let $p$ be an odd prime number. We denote by $v_p(\cdot)$ the $p$--adic valuation and by $|\cdot|$ and $|\cdot|_p$, respectively, the Euclidean absolute value and the $p$--adic absolute value. We denote the continued fraction expansion 
\begin{equation}
a_0+\cfrac{1}{a_1+\ddots}
\end{equation}
by $[a_0,a_1,\ldots]$. The coefficients $a_n$ are called \textit{partial quotients} and they are, for our purposes, rational numbers. For all $n\in\mathbb{N}$, the rational number
\begin{equation}\label{Eq: convergents}
\frac{A_n}{B_n}=[a_0,a_1,\ldots,a_n]=
a_0+\cfrac{1}{a_1+\ddots + \cfrac{1}{a_{n-1}+\cfrac{1}{a_n}}},
\end{equation}

corresponding to the continued fraction stopped at the $n$-th term, is called $n$-th \textit{convergent} of the continued fraction. It is not hard to prove that the sequences $\{A_n\}_{n\in\mathbb{N}}$ and $\{B_n\}_{n\in\mathbb{N}}$ in \eqref{Eq: convergents} satisfy the following recursions:
\begin{equation}\label{Eq: Recursions}
\begin{cases}
A_0=a_0,\\
A_1=a_1a_0+1,\\
A_n=a_nA_{n-1}+A_{n-2}, \ \ n \geq 2,
\end{cases}
\begin{cases}
B_0=1,\\
B_1=a_1,\\
B_n=a_nB_{n-1}+B_{n-2}, \ \ n \geq 2.
\end{cases}    
\end{equation}
Moreover, they satisfy, for all $n\in\mathbb{N}$, the relation:
\begin{equation}\label{Eq: converelation}
A_nB_{n+1}-A_{n+1}B_{n}=(-1)^{n+1}
\end{equation}

The standard algorithm to express a real number $\alpha$ through a simple continued fraction $[a_0,a_1,\ldots]$ works as follows, starting with $\alpha_0=\alpha$:
\begin{equation}\label{Alg: R}
\begin{cases}
a_n=\lfloor \alpha_n \rfloor \\
\alpha_{n+1}=\frac{1}{\alpha_n-a_n},
\end{cases}
\end{equation}

where $\lfloor \alpha_n \rfloor $ denotes the \textit{integer part} of $\alpha$. The elements $\alpha_n$ are called \textit{complete quotients}. If $\alpha_n=a_n$ for some $n\in\mathbb{N}$, then the algorithm terminates and the continued fraction is finite.\bigskip

One of the main strategies for introducing continued fractions in $\mathbb{Q}_p$ has been to search for an analogue of the floor function $\lfloor \cdot \rfloor$ that is meaningful inside $\mathbb{Q}_p$, in order to emulate the standard algorithm \eqref{Alg: R}. The first algorithms have been proposed by Ruban \cite{RUB} and Schneider \cite{SCH}. Browkin's first algorithm \cite{BI} is very similar to Ruban's algorithm, with the exception that the representatives of $\mathbb{Z}/p\mathbb{Z}$ are chosen in $\{-\frac{p-1}{2},\ldots,\frac{p-1}{2}\}$ instead of $\{0,\ldots,p-1\}$. This small variation is fundamental because Browkin's algorithm produces a finite continued fraction for each rational number, while Ruban's algorithm can also be periodic over the rationals. Given $\alpha=\sum\limits_{i=-r}^{+\infty}c_ip^i\in\mathbb{Q}_p$, with $c_i \in \{ -\frac{p-1}{2}, \ldots, \frac{p-1}{2} \}$, Browkin defines the floor function $s:\mathbb{Q}_p\rightarrow \mathbb{Q}$ as
\begin{equation}\label{Eq: sfunc}
s(\alpha)=\sum\limits_{i=-r}^{0}c_ip^i,
\end{equation}
and $s(\alpha)=0$ if $r<0$. Given the floor function $s$, the algorithm is the same as \eqref{Alg: R}. Browkin's first algorithm, that we call \textit{Browkin I}, work as follows. At the first step $\alpha_{0}=\alpha$ and, for all $n\geq 0$,
\begin{align} 
\begin{cases}\label{Alg: Br1}
a_n=s(\alpha_n)\\
\alpha_{n+1}=\frac{1}{\alpha_n-a_n}.
\end{cases}
\end{align}
If at some point $\alpha_n=a_n$, then the algorithm stops and $\alpha=[a_0,\ldots,a_n]$, i.e. $\alpha$ has finite \textit{Browkin I} continued fraction.\smallskip

In \cite{BII}, Browkin defined another floor function that is similar to the first function $s$, but excluding the summand with valuation zero. For $\alpha=\sum\limits_{i=-r}^{+\infty}c_ip^i\in\mathbb{Q}_p$, with $c_i \in \{ -\frac{p-1}{2}, \ldots, \frac{p-1}{2} \}$, the second floor function is the function $t:\mathbb{Q}_p\rightarrow \mathbb{Q}$, such that
\[t(\alpha)=\sum\limits_{i=-r}^{-1}c_ip^i,\]
and $t(\alpha)=0$ if $r\leq 0$. The second algorithm, that we call \textit{Browkin II}, works on an input $\alpha$ as follows. At the first step $\alpha_{0}=\alpha$ and, for all $n\geq 0$,
\begin{align} 
\begin{cases}\label{Alg: Br2}
a_n=s(\alpha_n) \ \ \ \ \ & \textup{if} \ n \ \textup{even}\\
a_n=t(\alpha_n) & \textup{if} \ n \ \textup{odd}\ \textup{and} \ v_p(\alpha_n-t(\alpha_n))= 0\\
a_n=t(\alpha_n)-sign(t(\alpha_n)) & \textup{if} \ n \ \textup{odd} \ \textup{and} \ v_p(\alpha_n-t(\alpha_n))\neq 0\\
\alpha_{n+1}=\frac{1}{\alpha_n-a_n}.
\end{cases}
\end{align}
If at some point $\alpha_n=a_n$, then the algorithm stops and $\alpha=[a_0,\ldots,a_n]$, i.e. $\alpha$ has finite \textit{Browkin II} continued fraction. One consequence of the alternation of the two functions is that, in \textit{Browkin II}, all the even partial quotients are by constructions integers and all the odd partial quotients are non-integer rationals. In \cite{MR}, a modification of \textit{Browkin II} has been defined, that does not use the sign function. For all $\alpha_0\in\mathbb{Q}_p$, the algorithm works as follows:
\begin{align} 
\begin{cases}\label{Alg: MR}
a_n=s(\alpha_n) \ \ \ \ \ & \textup{if} \ n \ \textup{even}\\
a_n=t(\alpha_n) & \textup{if} \ n \ \textup{odd}\\
\alpha_{n+1}=\frac{1}{\alpha_n-a_n}.
\end{cases}
\end{align}
It has been proved in \cite{MR} that Algorithm \eqref{Alg: MR} improves the periodicity properties of Browkin's second algorithm. For more details and a computational analysis on the performance of \textit{Browkin I}, \textit{Browkin II} and Algorithm \eqref{Alg: MR}, see \cite{MR}.

\section{General remarks on the periodicity}\label{Sec: generalremarks}
Lagrange's Theorem states that the standard continued fraction expansion of a real quadratic irrational number becomes eventually periodic. The most natural technique is to divide the proof in two steps. First, a theorem of Galois \cite{GAL} characterizes the purely periodic continued fractions.

\begin{Theorem}[Galois' Theorem]\label{Thm: Galois}
The continued fraction expansion of a quadratic irrational $\alpha\in\mathbb{R}$ is purely periodic if and only if it is \textit{reduced}, that is $\alpha>1$ and $-1<\overline{\alpha}<0$, where $\overline{\alpha}$ is the conjugate of $\alpha$ over $\mathbb{Q}$.
\end{Theorem}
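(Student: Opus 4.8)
The plan is to prove the two implications separately, with the reverse direction (reduced $\Rightarrow$ purely periodic) being the substantial one. For the forward direction, assume $\alpha=[\overline{a_0,\ldots,a_{k-1}}]$ is purely periodic. First I would record that $a_0\geq 1$: since the block repeats, $\alpha=\alpha_k$ is a complete quotient at a positive index, forcing $a_0=a_k=\lfloor\alpha_k\rfloor\geq 1$, whence $\alpha=a_0+1/\alpha_1>1$. Next I would use the convergent recursion \eqref{Eq: Recursions} together with periodicity to write $\alpha=\frac{A_{k-1}\alpha+A_{k-2}}{B_{k-1}\alpha+B_{k-2}}$, so that $\alpha$ is a root of $f(x)=B_{k-1}x^2+(B_{k-2}-A_{k-1})x-A_{k-2}$. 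Since $\overline{\alpha}$ is the other root, locating it reduces to a sign analysis of $f$: with the standard conventions $A_{-1}=1$, $B_{-1}=0$ one checks $f(0)=-A_{k-2}<0$ and $f(-1)=(A_{k-1}-A_{k-2})+(B_{k-1}-B_{k-2})>0$, using that the $A_n,B_n$ are positive and strictly increasing once $a_0\geq 1$. This places a root of $f$ in $(-1,0)$; as $\alpha>1$, that root must be $\overline{\alpha}$, giving $-1<\overline{\alpha}<0$.

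For the reverse direction, I would first prove that reducedness propagates along the complete quotients. If $\alpha_n$ is reduced, then $a_n=\lfloor\alpha_n\rfloor\geq 1$, and a direct check shows $\alpha_{n+1}=1/(\alpha_n-a_n)>1$, while conjugation over $\mathbb{Q}$ gives $\overline{\alpha_{n+1}}=1/(\overline{\alpha_n}-a_n)$; since $\overline{\alpha_n}\in(-1,0)$ and $a_n\geq 1$, the denominator lies in $(-1-a_n,-1)$, so $\overline{\alpha_{n+1}}\in(-1,0)$. Hence every complete quotient $\alpha_n$ is reduced. Writing $\alpha_n=(P_n+\sqrt{D})/Q_n$ with $D$ the fixed discriminant of $\mathbb{Q}(\alpha)$, the reducedness inequalities bound $P_n$ and $Q_n$, so only finitely many complete quotients can occur and some value must repeat: $\alpha_i=\alpha_{i+k}$ for suitable $i\geq 0$ and $k\geq 1$.

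The crux is then to show that the backward shift is injective on reduced quadratic irrationals, which upgrades this mere recurrence into pure periodicity. Taking conjugates in $\alpha_n=a_n+1/\alpha_{n+1}$ yields $-1/\overline{\alpha_{n+1}}=a_n-\overline{\alpha_n}$; because $\overline{\alpha_n}\in(-1,0)$, the right-hand side lies in $(a_n,a_n+1)$, so $a_n=\lfloor -1/\overline{\alpha_{n+1}}\rfloor$. Thus $a_n$ — and therefore $\alpha_n=a_n+1/\alpha_{n+1}$ — is completely determined by $\alpha_{n+1}$ alone. Applying this to $\alpha_i=\alpha_{i+k}$ and descending one index at a time, I obtain $\alpha_{i-1}=\alpha_{i+k-1},\ldots,\alpha_0=\alpha_k$, which is exactly the statement that $\alpha=[\overline{a_0,\ldots,a_{k-1}}]$ is purely periodic.

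I expect the main obstacle to be precisely this invertibility step. It is what forces the period to close up at the very beginning of the expansion rather than only eventually, and it rests entirely on the uniform estimate $-1<\overline{\alpha_n}<0$ holding for \emph{every} complete quotient, which is why the propagation of reducedness must be established first. The sign analysis in the forward direction and the discriminant bound in the reverse direction are routine once the reducedness inequalities are in place.
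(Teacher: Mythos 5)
Your proof is correct and follows the same outline the paper sketches for this classical theorem: propagation of reducedness along the complete quotients, the resulting finiteness of the admissible pairs $(P_n,Q_n)$, and hence a repetition $\alpha_i=\alpha_{i+k}$. You go beyond the paper's sketch in two ways worth noting: you also prove the forward implication (which the paper does not discuss) via the sign analysis of $f(x)=B_{k-1}x^2+(B_{k-2}-A_{k-1})x-A_{k-2}$ at $0$ and $-1$, and, more importantly, you supply the backward-injectivity step $a_n=\lfloor -1/\overline{\alpha_{n+1}}\rfloor$ that upgrades the mere recurrence among complete quotients to \emph{pure} periodicity --- a step the paper's sketch silently elides when it concludes only that ``the continued fraction is periodic.''
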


The idea to prove Galois' Theorem is that if a complete quotient 
\begin{equation}\label{Eq: quadirra}
\alpha_n=\frac{P_n+\sqrt{D}}{Q_n}
\end{equation}
is \textit{reduced}, also $\alpha_{n+1}$ is \textit{reduced}. Then, it is not hard to see that requiring $|\alpha_n|>1$ and $-1<\overline{\alpha}_n<0$ gives rise to only finitely many different choices for $P_n$ and $Q_n$. Therefore, a repetition $\alpha_{n}=\alpha_{n+k}$, $k\geq 1$, is met at some point, and the continued fraction is periodic. The standard proof of Lagrange's Theorem consists in proving that the expansion of every quadratic irrational eventually meets a \textit{reduced} quadratic irrational among its complete quotients. Hence, it starts to be periodic at some point.\smallskip

In $\mathbb{Q}_p$ there are some characterizations of purely periodic continued fractions, that are really similar to Galois' Theorem both in the statement and in the proof. 
\begin{Theorem}[\cite{CVZ},\cite{BEI}]\label{Thm: pureBR}
Let $\alpha\in\mathbb{Q}_p$ having a periodic Ruban's (or Browkin's) continued fraction. Then the continued fraction is purely periodic if and only if $|\alpha|_p>1$ and $|\overline{\alpha}|_p<1$.
\end{Theorem}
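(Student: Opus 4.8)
The plan is to mirror, in the $p$--adic setting, the classical two--part proof of Galois' Theorem (Theorem \ref{Thm: Galois}), replacing the order conditions $\alpha>1$, $-1<\overline{\alpha}<0$ by the absolute--value conditions $|\alpha|_p>1$, $|\overline{\alpha}|_p<1$. Both Ruban's and \textit{Browkin I} algorithms use a single floor function $s$ at every step (the function \eqref{Eq: sfunc} for \textit{Browkin I}, and its analogue with digits in $\{0,\ldots,p-1\}$ for Ruban), so I first record the three properties of $s$ on which everything rests: (i) $\alpha-s(\alpha)\in p\mathbb{Z}_p$; (ii) $s(\alpha+\beta)=s(\alpha)$ for every $\beta\in p\mathbb{Z}_p$, which follows from the uniqueness of the $p$--adic expansion in the chosen complete residue system, since $s(\alpha)$ collects exactly the digits of non--positive valuation; and (iii) $|s(\alpha)|_p=|\alpha|_p$ whenever $|\alpha|_p>1$. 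From (i) we get $|\alpha_n-a_n|_p=|\alpha_n-s(\alpha_n)|_p\le p^{-1}$, hence $|\alpha_{n+1}|_p>1$ for every $n\ge 0$; combined with (iii) this yields $|a_n|_p=|\alpha_n|_p\ge p$ for all $n\ge 1$. I will call a complete quotient \emph{reduced} when $|\alpha_n|_p>1$ and $|\overline{\alpha_n}|_p<1$.

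For the direct implication, assume the expansion is purely periodic of period $k\ge 1$, so that $\alpha=\alpha_k$. Since $|\alpha_k|_p>1$ by the previous paragraph, we already get $|\alpha|_p>1$, and therefore $|a_0|_p=|s(\alpha)|_p=|\alpha|_p\ge p$. Now $|a_i|_p\ge p>1$ for all $0\le i\le k-1$, so a short induction on \eqref{Eq: Recursions} together with the ultrametric inequality gives $|B_n|_p=|a_1\cdots a_n|_p$ and $|A_n|_p=|a_0\cdots a_n|_p$ (with the usual conventions $A_{-1}=1$, $B_0=1$). Pure periodicity gives $\alpha=\frac{A_{k-1}\alpha+A_{k-2}}{B_{k-1}\alpha+B_{k-2}}$, so $\alpha$ and $\overline{\alpha}$ are the two roots of $B_{k-1}x^2+(B_{k-2}-A_{k-1})x-A_{k-2}=0$ and their product is $\alpha\overline{\alpha}=-A_{k-2}/B_{k-1}$. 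Taking $p$--adic absolute values and using the two formulas above, $|\overline{\alpha}|_p=|A_{k-2}|_p/(|B_{k-1}|_p\,|\alpha|_p)=1/|a_{k-1}|_p\le p^{-1}<1$, which is the desired condition.

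For the converse, assume the expansion is (eventually) periodic and that $\alpha=\alpha_0$ is reduced. I first show reducedness propagates: $|\alpha_{n+1}|_p>1$ is automatic, and from $\overline{\alpha_{n+1}}=1/(\overline{\alpha_n}-a_n)$ with $|\overline{\alpha_n}|_p<1<|a_n|_p$ the ultrametric inequality gives $|\overline{\alpha_{n+1}}|_p=1/|a_n|_p<1$; hence every $\alpha_n$ is reduced. The key point is a backward recovery formula for the partial quotients: from $\overline{\alpha_n}-a_n=1/\overline{\alpha_{n+1}}$ and $\overline{\alpha_n}\in p\mathbb{Z}_p$, property (ii) yields $a_n=s\!\left(-1/\overline{\alpha_{n+1}}\right)$ and then $\overline{\alpha_n}=a_n+1/\overline{\alpha_{n+1}}$, so that $\overline{\alpha_{n+1}}$ determines the pair $(a_n,\overline{\alpha_n})$, and hence $\alpha_n$. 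Eventual periodicity means $\alpha_{m+k}=\alpha_m$ for some $m\ge 0$ and $k\ge 1$; conjugating and applying the recovery formula repeatedly propagates the equality backwards, $\alpha_{m+k-j}=\alpha_{m-j}$, down to $\alpha_k=\alpha_0=\alpha$, which is pure periodicity.

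I expect the converse to be the delicate part. Two points require care. First, the identity $a_n=s(-1/\overline{\alpha_{n+1}})$ is the $p$--adic replacement for the classical $a_n=\lfloor -1/\overline{\alpha_{n+1}}\rfloor$, and it hinges entirely on property (ii), i.e.\ on the fact that adding an element of $p\mathbb{Z}_p$ leaves the digits of non--positive valuation unchanged. Second, when propagating the equality backwards one only obtains $\overline{\alpha_{m+k-1}}=\overline{\alpha_{m-1}}$, so a priori $\alpha_{m+k-1}$ could equal $\overline{\alpha_{m-1}}$ rather than $\alpha_{m-1}$; this is ruled out because $|\alpha_{m+k-1}|_p>1$ while $|\overline{\alpha_{m-1}}|_p<1$, forcing $\alpha_{m+k-1}=\alpha_{m-1}$. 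Finally, the argument is uniform in the two algorithms, since only the three abstract properties of $s$ are used and both Ruban's and \textit{Browkin I}'s floor functions satisfy them.
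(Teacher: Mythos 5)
Your proof is correct. Note that the paper itself does not prove Theorem \ref{Thm: pureBR}: it is quoted as a background result from \cite{CVZ} and \cite{BEI}, with only the remark that such characterizations are ``really similar to Galois' Theorem both in the statement and in the proof.'' Your argument is precisely that Galois-style argument transposed to the ultrametric setting --- forward propagation of reducedness via $|\overline{\alpha}_{n+1}|_p=1/|a_n|_p$, the product-of-roots computation $\alpha\overline{\alpha}=-A_{k-2}/B_{k-1}$ combined with $|A_n|_p=|a_0\cdots a_n|_p$ and $|B_n|_p=|a_1\cdots a_n|_p$ for the direct implication, and the backward recovery $a_n=s(-1/\overline{\alpha}_{n+1})$ for the converse --- so it matches the approach of the cited sources and fills in the details the paper omits.
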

\begin{Theorem}[\cite{MR}]\label{Thm: pureMR}
Let $\alpha\in\mathbb{Q}_p$ having a periodic continued fraction with Algorithm \eqref{Alg: MR}. Then the continued fraction is purely periodic if and only if $|\alpha|_p\geq1$ and $|\overline{\alpha}|_p<1$.
\end{Theorem}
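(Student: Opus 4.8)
The plan is to prove both implications by a $p$--adic analogue of the classical Galois argument, tracking simultaneously the complete quotients $\alpha_n$ and their conjugates $\overline{\alpha}_n$. Since conjugation over $\mathbb{Q}$ fixes the rational partial quotients, the recursion in \eqref{Alg: MR} conjugates to $\overline{\alpha}_{n+1}=1/(\overline{\alpha}_n-a_n)$ (no division by zero occurs, as each $\alpha_n$ is a quadratic irrational). A short valuation computation on the two floor functions shows that $|a_n|_p=|\alpha_n|_p$ for every $n$, that $|\alpha_n|_p\geq 1$ for even $n$, and that $|\alpha_n|_p>1$ for odd $n$ (because $s$ kills all summands of non-positive valuation while $t$ leaves the summand of valuation $0$). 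This asymmetry is exactly what weakens the strict inequality $|\alpha|_p>1$ of Theorem \ref{Thm: pureBR} to $|\alpha|_p\geq 1$ here. Throughout I would replace the given period $L$ by $2L$, which is harmless for periodicity and lets me avoid separately proving that the period of an Algorithm \eqref{Alg: MR} expansion is even: the index $2L-1$ is odd regardless of the parity of $L$.

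For necessity, assume pure periodicity, so $\alpha=\alpha_{2L}$. As $2L$ is even, $\alpha$ is produced right after the odd step $2L-1$, whence $|\alpha|_p=|\alpha_{2L}|_p\geq 1$. For the conjugate I would use the convergent identity $\alpha=\frac{A_{2L-1}\alpha+A_{2L-2}}{B_{2L-1}\alpha+B_{2L-2}}$, so that $\alpha$ and $\overline{\alpha}$ are the two roots of $B_{2L-1}x^2+(B_{2L-2}-A_{2L-1})x-A_{2L-2}$ and $\alpha\overline{\alpha}=-A_{2L-2}/B_{2L-1}$. Writing $m_n=-v_p(a_n)=-v_p(\alpha_n)\geq 0$, with $m_n\geq 1$ for odd $n$, the recursions \eqref{Eq: Recursions} together with the fact that among two consecutive indices at least one is odd (so $m_{n-1}+m_n\geq 1$) give, by the ultrametric inequality and with no cancellation, $v_p(A_n)=-\sum_{i=0}^n m_i$ and $v_p(B_n)=-\sum_{i=1}^n m_i$. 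Substituting into $|\overline{\alpha}|_p=|A_{2L-2}|_p/(|B_{2L-1}|_p\,|\alpha|_p)$ and using $|\alpha|_p=p^{m_0}$, the exponents telescope to $|\overline{\alpha}|_p=p^{-m_{2L-1}}$; since $2L-1$ is odd we have $m_{2L-1}\geq 1$, hence $|\overline{\alpha}|_p<1$.

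For sufficiency, assume $|\alpha|_p\geq 1$ and $|\overline{\alpha}|_p<1$. I would first establish, by induction using only the ultrametric inequality and $|a_n|_p=|\alpha_n|_p$, the invariant $|\overline{\alpha}_n|_p<1$ for even $n$ and $|\overline{\alpha}_n|_p\leq 1$ for odd $n$ (the base case $n=0$ is where $|\alpha|_p\geq 1$ enters). The crucial consequence is a recovery formula: from $a_n-\overline{\alpha}_n=-1/\overline{\alpha}_{n+1}$, the disjointness of the supports of $a_n$ and $\overline{\alpha}_n$ in the balanced-digit expansion shows that no carries occur, so $a_n=s(-1/\overline{\alpha}_{n+1})$ when $n$ is even and $a_n=t(-1/\overline{\alpha}_{n+1})$ when $n$ is odd; that is, each partial quotient is determined by the next conjugate complete quotient together with the parity of $n$. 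Now take the minimal pre-period $i$ for the even period $2L$; if $i\geq 1$, then $\alpha_i=\alpha_{i+2L}$ forces $\overline{\alpha}_i=\overline{\alpha}_{i+2L}$, and since $i-1$ and $i+2L-1$ share parity the recovery formula yields $a_{i-1}=a_{i+2L-1}$, whence $\alpha_{i-1}=a_{i-1}+1/\alpha_i=a_{i+2L-1}+1/\alpha_{i+2L}=\alpha_{i+2L-1}$, contradicting minimality. Hence $i=0$ and the expansion is purely periodic.

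I expect the recovery formula to be the main obstacle, since it is the only point where the precise shape of $s$ and $t$ and the balanced representatives in $\{-\frac{p-1}{2},\dots,\frac{p-1}{2}\}$ are essential: one must verify that subtracting the quantity $\overline{\alpha}_n$, of valuation $\geq 1$ (resp. $\geq 0$), from $a_n$ produces no carry into the digits selected by $s$ (resp. $t$), so that reapplying the floor function returns exactly $a_n$. A secondary point to handle carefully is the valuation bookkeeping in the convergent computation, where cancellation must be excluded; the observation $m_{n-1}+m_n\geq 1$ is precisely what makes the dominant term win the ultrametric comparison automatically.
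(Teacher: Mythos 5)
The paper does not prove this statement: it is quoted from the reference \cite{MR} (Theorems \ref{Thm: pureBR} and \ref{Thm: pureMR} are stated as imported results, with only the real Galois argument sketched for context), so there is no in-paper proof to compare against. Your argument is correct and is essentially the standard Galois-type proof used in the cited literature: the valuation bookkeeping $v_p(A_n)=-\sum_{i\le n}m_i$, $v_p(B_n)=-\sum_{1\le i\le n}m_i$ (with $m_{n-1}+m_n\ge 1$ ruling out cancellation) gives $|\alpha\overline{\alpha}|_p=|A_{2L-2}/B_{2L-1}|_p$ and hence $|\overline{\alpha}|_p=p^{-m_{2L-1}}<1$ for necessity, while the invariant $|\overline{\alpha}_n|_p<1$ ($n$ even), $\le 1$ ($n$ odd), the no-carry recovery $a_n=s(-1/\overline{\alpha}_{n+1})$ or $t(-1/\overline{\alpha}_{n+1})$, and the descent on the pre-period give sufficiency. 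You also correctly isolate the parity asymmetry of $s$ and $t$ as the reason the threshold is $|\alpha|_p\ge 1$ here rather than the strict inequality of Theorem \ref{Thm: pureBR}, and the period-doubling trick cleanly sidesteps any discussion of the parity of the period.
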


In Theorems \ref{Thm: pureBR} and \ref{Thm: pureMR}, the hypothesis of starting from a periodic continued fraction can not be removed. This is in great contrast with Galois' Theorem, where the conditions on the absolute values of $\alpha$ and $\overline{\alpha}$ are also sufficient for the periodicity. \smallskip

The main problem here is the use of the $p$--adic absolute value instead of the Euclidean one. In fact, unlike the real case, there are infinitely many $p$--adic quadratic irrationals $\alpha$ satisfying $|\alpha|_p>1$ (or $\geq 1$) and $|\overline{\alpha}|_p<1$. Therefore, being \textit{reduced} is not a sufficient condition for periodicity in $\mathbb{Q}_p$.\smallskip

In \cite{CVZ}, Capuano, Veneziano and Zannier proved that there are only finitely many quadratic irrational with same discriminant $D$ having a periodic Ruban's continued fraction. This allowed to prove an effective criterion for determining in a finite number of steps if a given quadratic irrational becomes periodic or not.\smallskip

The idea carried out in \cite{CVZ} exploits a ``real" argument more than a ``$p$--adic" argument. In fact, it is used the real image of the quadratic irrational $\alpha$ in $\mathbb{R}$. If the $p$--adic quadratic irrational number
\[\alpha=\frac{P+\sqrt{D}}{Q}\]
has got a purely periodic expansion with Ruban's algorithm, then $\alpha=\alpha_k$ for some $k$, hence
\[\alpha=\frac{\alpha_kA_{k-1}+A_{k-2}}{\alpha_kB_{k-1}+B_{k-2}}=\frac{\alpha A_{k-1}+A_{k-2}}{\alpha B_{k-1}+B_{k-2}},\]
and it is a root of the polynomial
\[B_{k-1}\alpha^2-(A_{k-1}-B_{k-2})\alpha-A_{k-2}=0.\]
Since all the partial quotients in Ruban's expansion are positive, then the numerators $A_n$ and the denominators $B_n$ are positive for all $n\in\mathbb{N}$. Therefore, if $\alpha$ has a periodic expansion, then
\begin{equation}\label{Eq: CVZ}
\alpha\overline{\alpha}=-\frac{A_{k-2}}{B_{k-1}}<0.
\end{equation}
If $\alpha$ is purely periodic, then all its complete quotients $\alpha_n$ are purely periodic and hence satisfy \eqref{Eq: CVZ}. However, this means that
\begin{equation}\label{Eq: CVZ2}
\alpha_n\overline{\alpha}_n=\frac{P_n^2-D}{Q_n^2}<0,
\end{equation}
for all $n\in\mathbb{N}$. It is not hard to see that \eqref{Eq: CVZ2} gives an upper bound on both $P_n$ and $Q_n$, depending only on $D$, similar to those obtained in the real case. Therefore, there are only finitely many different complete quotients $\alpha_n$ for a $p$--adic number $\alpha$ having a purely periodic Ruban's expansion. This gives an effective practical criterion for determining the periodicity or non-periodicity of Ruban's continued fractions, since in finite time (practically determinable) there is either a repetition or a complete quotient not satisfying \eqref{Eq: CVZ}.\bigskip

Unfortunately, a similar technique is not applicable for \textit{Browkin-type} algorithms. In fact, since Browkin's partial quotients can also be negative, then \eqref{Eq: CVZ} is not a necessary condition for the periodicity. This is one of the main reasons why the problem of determining if an analogue of Lagrange's Theorem holds or fails is still unsolved. The best result in this sense is the following, and it has been proved by Capuano, Murru and Terracini in \cite{CMT} for \textit{Browkin I}.

\begin{Proposition}[\cite{CMT}, Proposition 4.8]\label{Prop: cmt}
Let $\alpha\in\mathbb{Q}_p$ be a quadratic irrational and for every $n>0$ denote by $\xi_n$ and $\xi'_n$ the two images of $\alpha_n$ in $\mathbb{C}$ and $t=\lfloor \sqrt{\Delta}\rfloor$. Assume that $\exists n_0>0$ such that $\xi_n\xi'_n<0$ for every $n\in\left[n_0,n_0+K\right]$, where $K=K(t)$ is a constant depending only on $t$. Then the \textit{Browkin I} continued fraction of $\alpha$ is periodic with a period of length at most $K$.  
\end{Proposition}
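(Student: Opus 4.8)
The plan is to transport into a real embedding the finiteness argument that Capuano, Veneziano and Zannier use for Ruban's algorithm (the discussion around \eqref{Eq: CVZ2}), replacing the positivity of the partial quotients---unavailable for \textit{Browkin I}---by the sign hypothesis $\xi_n\xi'_n<0$ imposed on a whole block of indices. Writing each complete quotient as $\alpha_n=\frac{P_n+\sqrt{\Delta}}{Q_n}$ inside the quadratic field $\mathbb{Q}(\sqrt{\Delta})=\mathbb{Q}(\alpha)$, its two images in $\mathbb{C}$ are $\xi_n,\xi'_n=\frac{P_n\pm\sqrt{\Delta}}{Q_n}$, so that $\xi_n\xi'_n=\frac{P_n^2-\Delta}{Q_n^2}$. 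The first observation is that the condition $\xi_n\xi'_n<0$ is exactly the inequality $P_n^2<\Delta$; this forces $\Delta>0$ (the images are genuinely real) and bounds $|P_n|<\sqrt{\Delta}$.

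Next I would record the standard recursions induced by $\alpha_{n+1}=\frac{1}{\alpha_n-a_n}$, namely $P_{n+1}=a_nQ_n-P_n$ and $Q_nQ_{n+1}=\Delta-P_{n+1}^2$. Evaluating the sign hypothesis at index $n+1$ gives $Q_nQ_{n+1}=\Delta-P_{n+1}^2>0$ together with $|Q_nQ_{n+1}|<\Delta$, so consecutive denominators share a sign and have bounded product. Granting that these constraints confine the pairs $(P_n,Q_n)$ to a finite set whose cardinality $K=K(t)$ depends only on $t$ (the content of the next paragraph), the $K+1$ indices of $[n_0,n_0+K]$ must, by the pigeonhole principle, produce a repetition $\alpha_{n_1}=\alpha_{n_2}$ with $n_0\le n_1<n_2\le n_0+K$. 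Since \textit{Browkin I} is deterministic, the equality $\alpha_{n_1}=\alpha_{n_2}$ propagates to $a_{n_1}=a_{n_2}$, $\alpha_{n_1+1}=\alpha_{n_2+1}$, and so on, so the expansion is eventually periodic with period $n_2-n_1\le K$, which is the assertion.

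The delicate point---and the place where the argument truly departs from the Ruban setting---is the finiteness of the admissible pairs. For Ruban the partial quotients are integers, so $P_n,Q_n\in\mathbb{Z}$ with $Q_n\mid\Delta-P_n^2$, and then $P_n^2<\Delta$ gives at once $|P_n|\le t$ and $|Q_n|\le\Delta-P_n^2<\Delta$, with no alternation of $|Q_n|$ possible. For \textit{Browkin I} this collapses: since $v_p(\alpha_n)<0$ for every $n\ge1$, the partial quotient $a_n=s(\alpha_n)$ is a non-integer rational whose denominator is a power of $p$, so $P_n$ and $Q_n$ acquire $p$-power denominators and the divisibility bound $Q_n\mid\Delta-P_n^2$ is lost. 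What survives on the real side, $|P_n|<\sqrt{\Delta}$ and $|Q_nQ_{n+1}|<\Delta$, controls the size of $P_n$ and the product of consecutive denominators but not each $|Q_n|$ separately---a priori, large and small denominators could alternate. The hard part is therefore to supply a \textit{Browkin I}--specific $p$-adic valuation analysis bounding $v_p(P_n)$ and $v_p(Q_n)$ which, combined with the product bound, rules out such alternation, so that $(P_n,Q_n)$ ranges over a set finite for both the Euclidean and the $p$-adic absolute value. I expect this valuation control to be the genuine obstacle, and the need to iterate it along an entire block is exactly what forces the uniform length $K=K(t)$ in the statement rather than a single-index condition.
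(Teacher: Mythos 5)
Note first that the paper does not prove Proposition \ref{Prop: cmt}: it is quoted from \cite{CMT} (Proposition 4.8), and the only indication given here is that the proof ``exploits an argument really similar to the one carried out in \cite{CVZ}, starting from Equation \eqref{Eq: CVZ}''. Your reconstruction follows exactly that strategy: translate the sign condition $\xi_n\xi'_n<0$ into $P_n^2<\Delta$, use the recursions $P_{n+1}=a_nQ_n-P_n$ and $Q_nQ_{n+1}=\Delta-P_{n+1}^2$ to bound $|P_n|$ and the product $|Q_nQ_{n+1}|$, and conclude by pigeonhole on the complete quotients together with the determinism of the algorithm. That skeleton is sound and is the intended one.

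The problem is that the skeleton is all you supply. The entire content of the proposition is the finiteness, with cardinality $K=K(t)$ depending only on $t=\lfloor\sqrt{\Delta}\rfloor$, of the set of admissible pairs $(P_n,Q_n)$, and you explicitly defer this step (``the hard part is therefore to supply a \textit{Browkin I}-specific $p$-adic valuation analysis\dots I expect this valuation control to be the genuine obstacle''). As you yourself observe, the Euclidean bounds $|P_n|<\sqrt{\Delta}$ and $|Q_nQ_{n+1}|<\Delta$ do not confine $(P_n,Q_n)$ to a finite set once these quantities live in $\mathbb{Z}[1/p]$ rather than $\mathbb{Z}$: bounded absolute value is compatible with unbounded $p$-power denominators, and the product bound alone allows $|Q_n|$ to oscillate between very large and very small values. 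The missing ingredient is precisely the interplay between these real bounds and the $p$-adic valuations of $P_n$ and $Q_n$, which in the Browkin setting are governed by the valuations of the complete quotients and of their conjugates; this is what occupies the technical part of Section 4 of \cite{CMT} and is what makes $K$ computable from $t$ alone. Without it the pigeonhole step has nothing to stand on, so the proposal identifies the correct route but does not constitute a proof of the statement.
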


The proof of Proposition \ref{Prop: cmt} exploits an argument really similar to the one carried out in \cite{CVZ}, starting from Equation \eqref{Eq: CVZ}. 

\begin{Remark}
In order to apply the argument of \cite{CVZ} to obtain a criterion for the periodicity of \textit{Browkin-type} $p$--adic continued fractions, we do not necessarily need \eqref{Eq: CVZ}. It would suffice to have, for some $K>0$,
\begin{equation}\label{Eq: utopia}
\frac{A_{k-1}}{B_{k}}>-K,
\end{equation}
so that
\begin{equation}
\alpha\overline{\alpha}=-\frac{A_{k-1}}{B_{k}}<K.
\end{equation}
This would lead to some bound on the sequences $P_n$ and $Q_n$ and hence to an effective criterion for the periodicity of Browkin's continued fractions. However, even if the constant $K$ can be arbitrarily large, it is not easy (or probably not possible at all) to find a $p$-adic quadratic irrational $\alpha$ such that its convergents satisfy \eqref{Eq: utopia}.
\end{Remark}

For all these reasons, there is not an ``easy" way to prove non-periodicity for \textit{Browkin I} up to now, and similar reasonings can be performed for \textit{Browkin II} or Algorithm \eqref{Alg: MR}. At the present time, no quadratic irrational has been proved to have non-periodic \textit{Browkin I} continued fraction, although it is largely believed to fail Lagrange's Theorem (see, for example, the experimental computations in \cite{MR}).\bigskip

Before delving into the real convergence of $p$--adic continued fractions, let us express few other words on the effectiveness of the real approach for $p$--adic continued fractions. Browkin's proof \cite{BI} that every rational numbers has a finite \textit{Browkin I} continued fraction could not be carried out without an argument controlling the Euclidean size of the partial quotients inside $\mathbb{R}$. In fact, the standard proof of finiteness for rational numbers (that also inspired all the subsequent proofs of finiteness in \cite{BCMI,BCMII,DW,MR,MRSII}) consists in producing a strictly decreasing sequence of integers in Euclidean absolute value. It is not possible to use a similar argument inside $\mathbb{Q}_p$, since there are infinite sequences of integers that are strictly decreasing in $p$--adic absolute value. Hence, all the proofs of finiteness for the $p$--adic continued fractions of rational numbers that are known so far, are performed by embedding the continued fractions inside $\mathbb{R}$. It is remarkable also the technique carried out in \cite{MVV} for the archimedean setting and then in \cite{CMT2} for the $p$-adic one, in order to prove finiteness for continued fractions defined over quadratic number fields. The arguments use Weil height and Northcott's theorem. This is an interesting approach because Weil height takes into account all absolute values of an algebraic number, not just the Euclidean or the $p$-adic one for a fixed $p$.

In \cite{DW}, Deng and Wang defined a continued fraction algorithm following an idea similar to Browkin's proof of finiteness, but for proving the periodicity. They introduced two novel floor functions $\tilde{s}$ and $\tilde{t}$ that minimize the Euclidean absolute value of $\alpha-\tilde{s}(\alpha)$ and $\alpha-\tilde{t}(\alpha)$. This machinery allowed to find some nice inequalities for the numerators $P_n$ and the denominators $Q_n$ of the complete quotients $\alpha_n$. These inequalities led, for small values of the prime $p$, to a strictly decreasing sequence involving $P_n$ and $Q_n$. Therefore, for these small values of $p$, the $p$--adic continued fraction of every quadratic irrational is eventually periodic, using their algorithm. This is the first proof of periodicity of an algorithm in some $\mathbb{Q}_p$ for every $p$--adic quadratic irrational (at least for those having positive discriminant, hence a real embedding). A periodic continued fraction expression for every quadratic irrational $\alpha$ having both a $p$--adic and a real interpretation was already defined in \cite{BCMI}. In \cite{BCMI}, the authors exploited the simultaneous convergence to $\alpha$ in $\mathbb{R}$ and $\mathbb{Q}_p$ of the Rédei rational functions $R_n$ \cite{RED} and provided a periodic continued fraction having exactly the $R_n$ as convergents.\smallskip

However, the important weakness of these algorithms is that they work only for quadratic irrationals, and in particular for those that can be embedded in both $\mathbb{Q}_p$ and $\mathbb{R}$. Moreover, they require the explicit knowledge of the minimal polynomial of the quadratic irrational. This is in great contrast with the spirit of Lagrange's Theorem for classical continued fractions. In fact, the standard algorithm works on a generic $\alpha\in\mathbb{R}$ and it is eventually periodic if and only if $\alpha$ is a quadratic irrational.

\section{Real convergence of $p$--adic continued fractions}\label{Sec: realconve}

The problem in full generality could be stated as: when does a continued fraction $[a_0,a_1,\ldots]$ converge both in the $p$--adic and in the real topology? Not always, of course. For example, the well known continued fraction
\begin{equation}\label{Eq: phi}
[\overline{1}]=\frac{1+\sqrt{5}}{2},
\end{equation}

converges in $\mathbb{R}$ to the golden mean, but does not converge in $\mathbb{Q}_p$. Moreover, also a continued fraction $[a_0,a_1,\ldots]$ with rational partial quotients that converges in $\mathbb{Q}_p$ not necessarily converges in $\mathbb{R}$ (not even in $\mathbb{C}$). For example
\[\left[\frac{1}{p},\frac{1}{p^2},\frac{1}{p^3},\ldots\right]\]
converges to a $p$--adic number since $v_p(a_n)<0$ for all $n$, but the series 
\[\sum\limits_{n=0}^{\infty}|a_n|=\sum\limits_{n=0}^{\infty}\frac{1}{p^{n+1}}=\frac{1}{p-1},\]
converges in Euclidean absolute value. Therefore, the continued fraction
$[a_0,a_1,\ldots]$ diverges to $+\infty$ in the Euclidean setting, due to a known result that we recall here.

\begin{Proposition}[\cite{WALL}, Theorem 6.1]
If the real continued fraction $[a_0,a_1,\ldots]$ converges to a real number, then the series $\sum\limits_{n=0}^{+\infty}|a_n|$ diverges to $+\infty$.
\end{Proposition}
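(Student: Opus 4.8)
The plan is to reduce the convergence of the continued fraction to the convergence of the telescoping series formed by the differences of consecutive convergents, and then to translate this into the growth of the denominators $B_n$. Dividing the relation \eqref{Eq: converelation} by $B_nB_{n+1}$ gives
\[
\frac{A_{n+1}}{B_{n+1}}-\frac{A_n}{B_n}=\frac{(-1)^{n}}{B_nB_{n+1}},
\]
so that $\frac{A_N}{B_N}=a_0+\sum_{n=0}^{N-1}\frac{(-1)^{n}}{B_nB_{n+1}}$. Hence $[a_0,a_1,\ldots]$ converges in $\mathbb{R}$ if and only if the series $\sum_{n\ge 0}\frac{(-1)^{n}}{B_nB_{n+1}}$ converges.

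First I would treat the case of positive partial quotients, where $|a_n|=a_n$; this is the situation covered by \cite[Theorem 6.1]{WALL}. When $a_n>0$ for all $n\ge 1$, the recursions \eqref{Eq: Recursions} force $B_n>0$ for every $n$, so the series above is alternating with positive terms $c_n=\frac{1}{B_nB_{n+1}}$. Moreover $B_{n+1}=a_{n+1}B_n+B_{n-1}>B_{n-1}$, so the product $B_nB_{n+1}$ is strictly increasing and $c_n$ is strictly decreasing. By the alternating series test, the series converges precisely when $c_n\to 0$, i.e. when $B_nB_{n+1}\to+\infty$. It therefore remains to show that $B_nB_{n+1}\to+\infty$ if and only if $\sum_{n}|a_n|$ diverges.

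For the direction where $\sum_n a_n=+\infty$, I would extract from \eqref{Eq: Recursions} the increments $B_{2k}-B_{2k-2}=a_{2k}B_{2k-1}\ge a_{2k}B_1$ and $B_{2k+1}-B_{2k-1}=a_{2k+1}B_{2k}\ge a_{2k+1}B_0$, and sum them to obtain the lower bounds $B_{2k}\ge 1+a_1\sum_{j\le k}a_{2j}$ and $B_{2k+1}\ge a_1+\sum_{j\le k}a_{2j+1}$. Since $\sum_n a_n=+\infty$, at least one of the sums over even or odd indices diverges, so at least one of the monotone subsequences $\{B_{2k}\}$, $\{B_{2k+1}\}$ tends to $+\infty$; as all $B_n$ are bounded below by $\min(B_0,B_1)>0$, this already forces $B_nB_{n+1}\to+\infty$. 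Conversely, arguing by contraposition, if $\sum_n a_n<+\infty$ I would set $P_n=\max(B_{n-1},B_n)$ and use $B_{n+1}\le(1+a_{n+1})P_n$ to get $P_{n+1}\le(1+a_{n+1})P_n$, whence $B_n\le P_n\le P_1\prod_{k\ge 2}(1+a_k)$; the product converges because $\sum_k a_k<+\infty$, so $B_n$ is bounded, $B_nB_{n+1}$ is bounded, $c_n\not\to 0$, and the series diverges.

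The main obstacle I expect is precisely this last equivalence between the divergence of $\sum_n a_n$ and the unboundedness of the denominators: it is where the positivity of the partial quotients is essential, both to guarantee the monotone subsequence structure of $\{B_n\}$ and to invoke the classical fact that $\prod_k(1+a_k)$ converges if and only if $\sum_k a_k$ does. The statement should be read with the convention that the partial quotients are positive, so that $|a_n|=a_n$; for genuinely signed $a_n$ a vanishing denominator $B_n=0$ can occur and the convergents may fail to be defined, so the equivalence would require an additional hypothesis excluding such poles.
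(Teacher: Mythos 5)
Your argument is correct, and it is essentially the classical Seidel--Stern proof: telescoping the determinant identity \eqref{Eq: converelation} into an alternating series, using positivity to get monotonicity of the two parity subsequences of $\{B_n\}$, and then showing $B_nB_{n+1}\to+\infty$ if and only if $\sum_n a_n$ diverges (the converse via the bound $B_n\le P_1\prod_k(1+a_k)$ and the convergence of the product when $\sum_k a_k<+\infty$). The paper itself offers no proof to compare against --- the statement is quoted verbatim from \cite[Theorem 6.1]{WALL} --- so you have supplied the standard proof of the cited result rather than an alternative to anything in the text. One point worth emphasizing: your closing caveat is not merely a technicality but a genuine discrepancy with the Proposition as printed. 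With $|a_n|$ in place of $a_n$ and no positivity hypothesis, only the ``only if'' direction (Stern--Stolz: $\sum|a_n|<+\infty$ forces divergence) survives; the ``if'' direction fails for signed or complex partial quotients, and in the Browkin setting of this paper the partial quotients are indeed signed, so the Proposition should be read as the positive-quotient theorem it is in Wall's book. Your proof correctly establishes exactly that version, and each step (the alternating series test, the lower bounds $B_{2k}\ge 1+a_1\sum_{j\le k}a_{2j}$ and $B_{2k+1}\ge a_1+\sum_{j\le k}a_{2j+1}$, and the contrapositive via $P_{n+1}\le(1+a_{n+1})P_n$) checks out.
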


Convergence is usually easier for periodic continued fractions and there are much more precise results than the non-periodic case. However, as we have seen in \eqref{Eq: phi}, periodic continued fractions that converge in $\mathbb{R}$ not necessarily converge in $\mathbb{Q}_p$. Also periodic $p$--adic continued fractions can be not convergent with respect to the Euclidean absolute value. For example, if we consider $[\overline{\frac{2}{5},-\frac{2}{5}}]$, then
\begin{equation}\label{Eq: negdis}
\alpha=\frac{\alpha_2A_1+A_0}{\alpha_2B_1+B_0}=\frac{\frac{21}{25}\alpha+\frac{2}{5}}{-\frac{2}{5}\alpha+1}.
\end{equation}

This means that $\alpha$ is a root of $5x^2-2x+5$, that has negative discriminant $\Delta=-96$. Therefore, it should converge to a non-real complex number, which is clearly not possible, as the convergents are rationals and $\mathbb{R}$ is a complete field.\bigskip

In the following, we delve into the case of our interest, that is when the continued fractions are obtained starting from an element $\alpha$. In order to properly speak about real and $p$--adic convergence to $\alpha$ of a continued fraction, $\alpha$ must be embedded in both the reals and the $p$--adics. Therefore we introduce some context and notation for the important cases of our study, the quadratic irrationals.\smallskip

A quadratic irrational is a root of an irreducible polynomial \[f(x)=ax^2+bx+c\in\mathbb{Q}[x]\]
of degree $2$ with rational coefficients. In our analysis, we deal with polynomials having a root both in $\mathbb{R}$ and in $\mathbb{Q}_p$. This means that the discriminant $D=b^2-4ac$ is positive, so that the equation $f(x)=0$ is solvable in $\mathbb{R}$, and it is a quadratic residue modulo $p$, so that the equation $f(x)=0$ is solvable in $\mathbb{Q}_p$. We call $\alpha^{(r)}$ and $\overline{\alpha}^{(r)}$ its roots in $\mathbb{R}$ and we call $\alpha^{(p)}$ and $\overline{\alpha}^{(p)}$ its roots in $\mathbb{Q}_p$.\smallskip

Capuano, Veneziano and Zannier \cite{CVZ} proved the following result for Ruban's continued fractions.

\begin{Theorem}[\cite{CVZ}, Proposition 4.5]\label{Thm: quadconv}
The Ruban's continued fraction of a $p$--adic quadratic irrational $\alpha^{(p)}$ always converges in $\mathbb{R}$ to some real number $\xi$.
\end{Theorem}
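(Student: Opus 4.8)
The plan is to deduce the statement from the Euclidean convergence criterion \cite[Theorem~6.1]{WALL} quoted above, so that everything reduces to showing that $\sum_n |a_n|$ diverges. The first step is structural: since Ruban's floor function keeps only the digits $c_i\in\{0,\dots,p-1\}$ of valuation $\le 0$, every partial quotient is a \emph{nonnegative} rational, and for $n\ge 1$ the complete quotient $\alpha_n=1/(\alpha_{n-1}-a_{n-1})$ has $v_p(\alpha_n)\le -1$, whence $a_n=s(\alpha_n)$ is \emph{strictly positive} (and, summing the geometric series, bounded by $p$). Positivity is the crucial feature I would exploit throughout: by \eqref{Eq: Recursions} it forces $B_n>0$ and $B_n\ge B_{n-2}$ for all $n$, so the even and odd convergents are monotone and, by the determinant relation \eqref{Eq: converelation}, interlace with gaps $|A_n/B_n-A_{n+1}/B_{n+1}|=1/(B_nB_{n+1})$. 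Hence all convergents lie in the bounded interval $[A_0/B_0,A_1/B_1]$, and the continued fraction converges in $\mathbb{R}$ if and only if $B_n\to+\infty$. The whole problem is therefore reduced to proving $B_n\to+\infty$.

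To access this, I would bring in the quadratic nature of $\alpha^{(p)}$. Writing the complete quotients in the form \eqref{Eq: quadirra}, $\alpha_n=(P_n+\sqrt D)/Q_n$, an induction on the algorithm shows $P_n,Q_n\in\mathbb{Q}$ and gives the usual recursions $P_{n+1}=a_nQ_n-P_n$ and $Q_nQ_{n+1}=D-P_{n+1}^2$. Because each $a_n$ is \emph{rational}, the two Galois images $\xi_n=(P_n+\sqrt D)/Q_n$ and $\xi_n'=(P_n-\sqrt D)/Q_n$, now read inside $\mathbb{C}$, obey the same M\"obius recursion $\xi_{n+1}=1/(\xi_n-a_n)$. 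Feeding this into the standard convergent identity gives, for both images, $\xi_0-A_n/B_n=(-1)^n/\bigl(B_n(B_n\xi_{n+1}+B_{n-1})\bigr)$, and multiplying the two and using \eqref{Eq: converelation} collapses to the clean invariant
\[ G(A_n,B_n)=(-1)^{n+1}\frac{Q_{n+1}}{Q_0}, \]
where $G(X,Y)=X^2-\tfrac{2P_0}{Q_0}XY+\tfrac{P_0^2-D}{Q_0^2}Y^2$ is the \emph{fixed} binary quadratic form attached to the minimal polynomial of $\alpha^{(p)}$, of discriminant $4D/Q_0^2$. Thus the value of one and the same form at the convergent $(A_n,B_n)$ equals, up to sign, the single coordinate $Q_{n+1}$ of the complete quotient.

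I would then close by contradiction. If $B_n\not\to+\infty$, then (the even or the odd subsequence being monotone and bounded) infinitely many $B_n$ stay bounded; since all convergents are bounded, the corresponding $A_n$ are bounded too, so $G(A_n,B_n)$, hence $Q_{n+1}$, and then $P_{n+1}$ via $P_{n+1}^2=D-Q_nQ_{n+1}$, remain bounded along this subsequence. Bounded $P_n,Q_n$ leave only finitely many possibilities for the complete quotient $\alpha_n$, forcing a repetition $\alpha_m=\alpha_{m+k}$; but a periodic continued fraction with positive partial quotients has $B_n\to+\infty$, contradicting the boundedness. When $D<0$ the form $G$ is definite, which yields the extra a priori bound $G(A_n,B_n)\ge\lambda(A_n^2+B_n^2)$ with $\lambda>0$ and makes this step especially transparent.

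The main obstacle is precisely the finiteness step: passing from \emph{archimedean} boundedness of $P_n,Q_n$ to finitely many complete quotients. Over $\mathbb{Q}$ this is automatic, but here $P_n,Q_n$ are only rationals with $p$-power denominators, and one must show these denominators cannot grow once $|P_n|,|Q_n|$ stay bounded — equivalently, that the forms of fixed discriminant $4D$ produced by the algorithm fall into finitely many $\mathbb{Z}[1/p]$-classes. This is exactly where the quadratic hypothesis is indispensable: the analogous real convergence statement is false for general $p$-adic inputs, whose Ruban expansions can have $\sum_n|a_n|<\infty$, so no argument using only positivity and the Euclidean criterion can suffice.
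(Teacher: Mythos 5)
This statement is quoted in the paper from \cite{CVZ} without proof, so the only meaningful comparison is with the argument actually given there. Your first reduction is sound and standard: Ruban's partial quotients satisfy $0<a_n<p$ for $n\geq 1$, hence $B_n>0$, the even and odd convergents interlace monotonically in a bounded interval, and convergence in $\mathbb{R}$ is equivalent to $B_nB_{n+1}\to+\infty$ (note: this is ``not both parity subsequences of $B_n$ are bounded,'' which is weaker than your stated ``$B_n\to+\infty$''; you should negate the correct condition, and doing so would incidentally repair your later need to bound \emph{consecutive} $Q_n,Q_{n+1}$ in $P_{n+1}^2=D-Q_nQ_{n+1}$). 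The quadratic-form identity $G(A_n,B_n)=(-1)^{n+1}Q_{n+1}/Q_0$ is also correct.

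The genuine gap is the one you name yourself, and it is not a removable technicality but the entire difficulty: archimedean boundedness of $P_n,Q_n\in\mathbb{Z}[1/p]$ does \emph{not} leave finitely many complete quotients, because the $p$-power denominators (equivalently the valuations $r_n=-v_p(\alpha_n)$) may grow without bound. This is exactly the obstruction the paper stresses in Section \ref{Sec: generalremarks}: unlike the real case, there are infinitely many $p$--adic quadratic irrationals meeting any archimedean-type bound, so the pigeonhole step fails. Worse, controlling the $r_n$ is essentially equivalent to the divergence of $\sum_n|a_n|$ you are trying to establish (each $a_n$ can be as small as $p^{-r_n}$), so your contradiction scheme is circular at its core. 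The proof in \cite{CVZ} runs in the opposite logical order: one first proves $\sum_n|a_n|=+\infty$ directly, by height and valuation estimates on the complete quotients of a quadratic irrational, and then invokes the Seidel--Stern criterion (\cite[Theorem 6.1]{WALL}, quoted in the paper); the quadratic form and the finiteness of complete quotients enter only afterwards, in the periodicity criterion, and only under the extra sign condition $\alpha_n\overline{\alpha}_n<0$ of \eqref{Eq: CVZ2}, which is what actually bounds $P_n$ and $Q_n$ $p$-adically as well as archimedeanly. As written, your proposal does not prove the theorem.
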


Ruban's continued fractions are not always periodic for quadratic irrationals. The periodic expansion of a $p$--adic quadratic irrational $\alpha^{(p)}$, with a technique similar to Proposition \ref{Pro: Conv}, can be showed to converge in $\mathbb{R}$ to either $\alpha^{(r)}$ or $\overline{\alpha}^{(r)}$. However, when Ruban's continued fraction of $\alpha^{(p)}$ is non-periodic, we know that it converges in $\mathbb{R}$, but we do not know to which real number. The authors of \cite{CVZ} left the following conjecture.

\begin{Conjecture}[\cite{CVZ}, Section 4.2]\label{Conj: cvz}
Ruban's continued fraction of a quadratic irrational is either periodic or it converges in $\mathbb{R}$ to a transcendental number.
\end{Conjecture}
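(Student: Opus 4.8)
The plan is to argue the statement in the equivalent form: if Ruban's continued fraction $[a_0,a_1,\ldots]$ of $\alpha^{(p)}$ is non-periodic then its real limit is transcendental, i.e. that an \emph{algebraic} real limit forces periodicity. By Theorem \ref{Thm: quadconv} the real limit $\xi:=\lim_n A_n/B_n$ exists, so the entire question concerns the arithmetic nature of $\xi$. The starting observation is that the single sequence $A_n/B_n\in\mathbb{Q}$ does two things at once: it tends to $\alpha^{(p)}$ in $\mathbb{Q}_p$ (an algebraic number of degree $2$) and to $\xi$ in $\mathbb{R}$. I would quantify both rates from the complete-quotient relation $\alpha^{(p)}=\frac{\alpha_n^{(p)}A_{n-1}+A_{n-2}}{\alpha_n^{(p)}B_{n-1}+B_{n-2}}$ together with \eqref{Eq: converelation}, exploiting the two structural features of Ruban's algorithm: the partial quotients are positive, and the complete quotients satisfy $|\alpha_n^{(p)}|_p>1$. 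These yield, up to bounded factors, $|\xi-A_{n-1}/B_{n-1}|_\infty\asymp (a_nB_{n-1}^2)^{-1}$ on the real side and a matching $p$-adic decay on the other.

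The main engine would be a simultaneous Diophantine approximation theorem of Ridout/Subspace type. Regarding $(A_{n-1},B_{n-1})$ as an integral point (after multiplying through by a suitable power of $p$ to clear denominators), the real linear form $A-\xi B$ and the $p$-adic linear form $A-\alpha^{(p)}B$ are both small at it. If $\xi$ is algebraic of any degree, both forms have algebraic coefficients, and the two-dimensional case of the Subspace Theorem (Ridout's theorem, one real and one $p$-adic place) applies. The aim of this step is to show that when $\xi$ is \emph{not} a root of the minimal polynomial $f$ of $\alpha$, the product of the two form-values drops strictly below the threshold $H(A_{n-1}/B_{n-1})^{-\epsilon}$, so that only finitely many convergents qualify. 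Since the convergents take infinitely many distinct values (consecutive ones differ by $\pm 1/(B_nB_{n-1})\neq 0$ by \eqref{Eq: converelation}, and $\xi$ is irrational), this is a contradiction. Hence an algebraic $\xi$ must be a root of $f$, i.e. $\xi\in\{\alpha^{(r)},\overline{\alpha}^{(r)}\}$.

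It then remains to upgrade ``the real limit is a root of $f$'' to ``the continued fraction is periodic''. This is precisely the converse of Proposition \ref{Pro: Conv} for Ruban's algorithm, that is, the still open statement underlying Conjecture \ref{Conj: realconv}. Here I would attempt the reducedness mechanism behind Galois' Theorem and \cite{CVZ}: if $\xi=\alpha^{(r)}$, then the real tails of the continued fraction coincide with the real images $\alpha_n^{(r)}=(P_n+\sqrt{D})/Q_n$ of the complete quotients, and one checks that $\overline{\alpha}_n^{(r)}$ eventually lands in $(-1,0)$, forcing $\alpha_n^{(r)}\overline{\alpha}_n^{(r)}<0$, i.e. $P_n^2<D$ as in \eqref{Eq: CVZ2}. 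This bounds $|P_n|$.

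The decisive obstacle appears exactly here, and it is the same one that keeps Theorem \ref{Thm: pureBR} from being a sufficient condition: bounding $|P_n|$ does not bound $Q_n$, because in $\mathbb{Q}_p$ the denominators $Q_n$ may absorb arbitrarily high powers of $p$ while $\alpha_n^{(p)}$ stays $p$-adically large. Thus the complete quotients need not take only finitely many values, and periodicity does not follow, which is exactly why the statement remains conjectural. A secondary difficulty is that the Subspace step is itself delicate: the surplus over the Roth–Ridout threshold must grow with the height, whereas the guaranteed gains (the factor $a_n$ from positivity and the factor $|\alpha_n^{(p)}|_p>1$ from Ruban's condition) are only bounded below by constants and may fail to outpace $H^{\epsilon}$ when the partial quotients stay small and $v_p(B_n)$ stays bounded. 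Controlling these two quantities from below, equivalently ruling out the ``small partial quotient, slow $p$-adic growth'' regime, is what the whole program would hinge upon.
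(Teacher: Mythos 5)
First, a point of order: the statement you were asked to prove is not a theorem of this paper. It is an open conjecture of Capuano, Veneziano and Zannier \cite{CVZ}, which the paper merely records; the only justification given there is the heuristic that algebraic numbers are countable, hence have measure zero in $\mathbb{R}$, so an ``unknown'' limit is expected to be transcendental. There is thus no proof to compare yours against, and your proposal, candidly presented as a program, does not close the conjecture either. Your second step is explicitly the converse of Proposition \ref{Pro: Conv} for Ruban's algorithm, i.e. the Ruban analogue of Conjecture \ref{Conj: realconv}, which is open; you correctly identify the obstruction (the bound $P_n^2<D$ coming from \eqref{Eq: CVZ2} controls $|P_n|$ but not $Q_n$, since $Q_n$ can absorb arbitrary powers of $p$ while $|\alpha_n|_p$ stays large, so finiteness of complete quotients does not follow). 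Hence even a successful first step would only reduce one open conjecture to another.

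More seriously, the first step, your main engine, fails quantitatively, and not merely in an edge regime one could hope to exclude. Clear denominators: with $K_n=-v_p(B_n)$, set $x_n=p^{K_n}(A_n,B_n)\in\mathbb{Z}^2$, so $H(x_n)\approx p^{K_n}B_n$ (Euclidean sizes). Using $|\xi-A_n/B_n|\approx 1/(B_nB_{n+1})$, which follows from \eqref{Eq: converelation}, and $|\alpha^{(p)}-A_n/B_n|_p=p^{-K_n-K_{n+1}}$, the product over the places $\infty$ and $p$ of the values at $x_n$ of the forms $a-\xi b$, $a-\alpha^{(p)}b$ and $b$ comes out to about $p^{-k_{n+1}}B_n/B_{n+1}\approx 1/\left(p^{k_{n+1}}a_{n+1}\right)$, where $k_{n+1}=-v_p(a_{n+1})$. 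The Ridout/Subspace hypothesis requires this product to be at most $H(x_n)^{-\epsilon}$, a quantity decaying exponentially since $K_n\geq n$; but every Ruban partial quotient satisfies $a_{n+1}<p$ in Euclidean absolute value, so the product is bounded below by $p^{-(k_{n+1}+1)}$ and beats the threshold only if the valuations $k_n$ grow linearly in $n$ --- an event that cannot be assumed for a fixed quadratic irrational and which, under Assumption \ref{Ass: equi}, has probability $0$. In other words, all the $p$-adic smallness of $A_n-\alpha^{(p)}B_n$ is exactly consumed by the $p$-power denominators of the convergents: the simultaneous approximation furnished by a $p$-adic continued fraction carries no Roth-quality surplus, so the Subspace Theorem imposes no constraint whatsoever on an algebraic limit $\xi$, of degree two or of any higher degree. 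You flagged this as a ``secondary difficulty'' to be handled by ruling out a bad regime; in fact the bad regime is the generic one, and it is the primary collapse of the approach.
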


This conjecture is quite reasonable, since algebraic numbers are countable and hence have measure $0$ in $\mathbb{R}$. Therefore, we expect a continued fraction having an  ``unknown" limit to converge to a transcendental number.\bigskip

For Browkin's continued fractions, we have experimentally observed similar results, for quadratic irrationals having a real image (see Section \ref{Sec: computations}). In the following we speak of either \textit{Browkin I} or \textit{Browkin II} or Algorithm \eqref{Alg: MR}, since they show similar behaviours of real convergence. We leave the following conjecture, that is similar to Theorem \ref{Thm: quadconv} for \textit{Browkin-type} continued fractions.

\begin{Conjecture}\label{Conj: BroConv}
\textit{Browkin I}, \textit{Browkin II} or Algorithm \eqref{Alg: MR} $p$-adic continued fractions of quadratic irrationals that can embedded into real numbers always converge in $\mathbb{R}$.
\end{Conjecture}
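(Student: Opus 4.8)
The plan is to reduce the statement to the divergence criterion quoted above from \cite[Theorem~6.1]{WALL}: the real continued fraction $[a_0,a_1,\ldots]$ converges in $\mathbb{R}$ if and only if $\sum_{n\ge 0}|a_n|=+\infty$. Since Proposition \ref{Pro: Conv} already settles the eventually periodic case, where the expansion converges to $\alpha^{(r)}$ or $\overline{\alpha}^{(r)}$, it suffices to treat the non-periodic case, and there one only has to establish the divergence of $\sum_n|a_n|$. I would first stress that the signs of the partial quotients are irrelevant for this criterion, which is exactly what makes it a promising substitute for the sign-sensitive inequality \eqref{Eq: CVZ} exploited by \cite{CVZ} for Ruban's algorithm.

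The second step is a lower bound on $|a_n|$ coming from the shape of the Browkin floor functions. For \textit{Browkin I} one has, for every $n\ge 1$, that $v_p(\alpha_n)=-r_n<0$, and $a_n=s(\alpha_n)=\sum_{i=-r_n}^{0}c_ip^{i}$ is a rational whose denominator is exactly $p^{r_n}$ and whose numerator $N_n$ satisfies $N_n\equiv c_{-r_n}\not\equiv 0\pmod p$, hence $N_n\neq 0$. Therefore $|a_n|=|N_n|\,p^{-r_n}\ge p^{-r_n}=|\alpha_n|_p^{-1}$, so that
\[
\sum_{n\ge 1}|a_n|\ \ge\ \sum_{n\ge 1}|\alpha_n|_p^{-1}\ =\ \sum_{n\ge 1}p^{-r_n}.
\]
For \textit{Browkin II} and Algorithm \eqref{Alg: MR} the same computation applies verbatim at the odd indices, where $v_p(\alpha_n)<0$ and $a_n=t(\alpha_n)$ (or its sign-adjusted variant) again has denominator $p^{r_n}$ and numerator prime to $p$, so it is enough to prove $\sum_{n\ \mathrm{odd}}p^{-r_n}=+\infty$; the even-indexed partial quotients are integers and contribute nonnegatively to $\sum_n|a_n|$.

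It now remains to control the growth of the valuations $r_n=-v_p(\alpha_n)$. Writing the complete quotients as $\alpha_n=\tfrac{P_n+\sqrt{D}}{Q_n}$ with $P_n,Q_n\in\mathbb{Q}$, one has $r_n=v_p(Q_n)-v_p(P_n+\sqrt{D})$, and I would try to bound $r_n$ by tracking the recursions for $P_n$ and $Q_n$ induced by $\alpha_{n+1}=1/(\alpha_n-a_n)$, using the fixed norm $\alpha_n\overline{\alpha}_n=(P_n^2-D)/Q_n^2\in\mathbb{Q}$ and the divisibility constraints tying $Q_n$ to $P_n^2-D$. If one can show that the $r_n$ stay bounded, or merely that $r_n\le c\log n$ for a constant $c$ with $c\log p\le 1$, then $\sum_n p^{-r_n}$ diverges and the proof is complete. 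In the periodic case this is immediate, since $\{\alpha_n\}$ is finite and $r_n$ is bounded; this is also the mechanism behind the analogue Theorem \ref{Thm: quadconv} for Ruban's algorithm.

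The hard part, and the reason the statement is only conjectural, is exactly this control of $r_n$ in the non-periodic regime. For Ruban's algorithm the positivity of the partial quotients forces the inequality \eqref{Eq: CVZ} on the conjugate products, which in turn bounds $P_n$ and $Q_n$ and hence the valuations; for \textit{Browkin-type} algorithms the partial quotients can be negative, \eqref{Eq: CVZ} fails, and there is at present no uniform bound on $v_p(Q_n)$. Thus the decisive obstacle is to rule out that the $p$-adic denominators of the complete quotients of a non-periodic \textit{Browkin} expansion grow fast enough to make $\sum_n p^{-r_n}$ converge, precisely the phenomenon that the example $[\tfrac1p,\tfrac1{p^2},\ldots]$ shows is possible for general continued fractions, but which the arithmetic rigidity of quadratic irrationals is expected, though not known, to exclude.
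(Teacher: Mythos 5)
This statement is a \emph{conjecture} in the paper: the author offers no proof, only the heuristic and computational evidence of Sections \ref{Sec: realconve}--\ref{Sec: computations}, and explicitly identifies the presence of negative partial quotients as the obstruction to adapting the proof of Theorem \ref{Thm: quadconv}. Your proposal is likewise not a proof, and you say so honestly; the question is whether your plan, if the missing step on the valuations $r_n=-v_p(\alpha_n)$ were supplied, would actually close the argument. It would not, and the reason is a gap at the very first reduction. The criterion you invoke (\cite[Theorem 6.1]{WALL}, the Seidel--Stern theorem) gives ``$\sum_n|a_n|=+\infty$ implies convergence'' only for continued fractions with \emph{positive} partial quotients; for general real partial quotients the divergence of $\sum_n|a_n|$ is necessary (Stern--Stolz) but not sufficient for convergence. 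Since Browkin-type partial quotients are frequently negative, establishing $\sum_n|a_n|\ge\sum_n p^{-r_n}=+\infty$ — which, incidentally, is nearly automatic from your own lower bound, since $r_n\ge 1$ forces only $|a_n|\ge p^{-r_n}$ and one would still need $r_n$ not to grow linearly — would not yield convergence. The true difficulty, which the paper discusses around Equation \eqref{Eq: diffconvergents}, is cancellation in the recursion $B_{n+1}=a_{n+1}B_n+B_{n-1}$: with sign changes the $|B_n|$ need not be eventually increasing, so $|B_nB_{n+1}|$ need not tend to infinity and the convergents can in principle oscillate even when every $|a_n|$ is large. Any proof of Conjecture \ref{Conj: BroConv} must control these cancellations directly (this is what the paper's probabilistic Section \ref{Sec: prob} addresses, under Assumption \ref{Ass: equi}), not merely the size of the partial quotients.

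A secondary, smaller point: your lower bound $|a_n|\ge p^{-r_n}$ is correct for \textit{Browkin I}, but note that it is extremely weak — it tends to $0$ as $r_n$ grows — so even in the positive-quotient world it would only rescue the Seidel--Stern criterion if you could show $r_n=O(\log n / \log p)$, which is exactly the unproved arithmetic input. For Ruban's algorithm the proof of Theorem \ref{Thm: quadconv} in \cite{CVZ} does not proceed by bounding valuations in this way; it uses the positivity of all $A_n,B_n$ together with the monotonicity of the convergents, a mechanism with no analogue here.
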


In fact, it computationally appears that Browkin's and Algorithm \eqref{Alg: MR} continued fractions converge, both in the periodic and the non-periodic cases, to some real number (see computations in Section \ref{Sec: computations}). The problem in generalizing the proof of Theorem \ref{Thm: quadconv} is again the presence of negative partial quotients.

\begin{Remark}\label{Rem: referee}
Notice the difference between the statement of Theorem \ref{Thm: quadconv} and our Conjecture \ref{Conj: BroConv}. In fact, in Theorem \ref{Thm: quadconv} the real convergence is proved for all quadratic irrationalities, also in the case where they can not be embedded in the reals, i.e. they have negative discriminant. This implies, in particular, that imaginary quadratic irrationals can not have a periodic Ruban's continued fraction. Instead, Conjecture \ref{Conj: BroConv} is stated only for quadratic irrationalities having positive discriminant. In fact, it is false for imaginary quadratic irrationals as, for example,
\begin{equation*}
\sqrt{-6}=\left[2,-\frac{7}{5},\overline{\frac{8}{5},-\frac{12}{5}}\right],
\end{equation*}
has a periodic \textit{Browkin I} continued fraction in $\mathbb{Q}_5$ and, hence, can not converge to a real number. On the contrary, \textit{Browkin I} continued fractions of imaginary quadratic irrationals for which periodicity is not observed, seem to converge to a real number.
\end{Remark}

Now we prove Proposition \ref{Pro: Conv}, stating that, in the periodic case, the $p$--adic continued fraction converges to the ``same" quadratic irrational in $\mathbb{R}$. In fact, we show that periodic $p$--adic continued fractions of quadratic irrationals that have embeddings both in $\mathbb{R}$ and in $\mathbb{Q}_p$, converge simultaneously in $\mathbb{R}$ and in $\mathbb{Q}_p$ to roots of the same minimal polynomial. Before proving Proposition \ref{Pro: Conv}, we recall the following classical and very general result for the convergence of periodic complex continued fractions in the Euclidean setting. The proof can be found in Wall's book \cite{WALL} or in Lane's original paper \cite{LAN}.

\begin{Theorem}[\cite{WALL}, Theorem 8.1]\label{Thm: Wall}
Let $[\overline{a_0,\ldots,a_{k-1}}]$ be a periodic continued fraction with complex partial quotients and let us call $\alpha_1$ and $\alpha_2$ the two roots of the polynomial relation of degree $2$ obtained by the periodicity. Then the continued fraction converges in $\mathbb{C}$ if and only if $\alpha_1$ and $\alpha_2$ are finite and one of the following conditions holds:
\begin{enumerate}
    \item[i)] $\alpha_1=\alpha_2$,
    \item[ii)] $\left|\alpha_2-\frac{A_{k-1}}{B_{k-1}}\right|>\left|\alpha_1-\frac{A_{k-1}}{B_{k-1}}\right|$, and $\frac{A_n}{B_n}\neq \alpha_2$ for $n=0,\ldots,k-2$.
\end{enumerate}
\end{Theorem}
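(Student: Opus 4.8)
The plan is to encode a full period as a single linear fractional transformation and reduce the problem to the dynamics of its iterates. Writing one period as a matrix
\[
M=\begin{pmatrix}A_{k-1}&A_{k-2}\\ B_{k-1}&B_{k-2}\end{pmatrix},
\qquad \det M=(-1)^{k},
\]
where the determinant is computed from \eqref{Eq: converelation}, the recursion \eqref{Eq: Recursions} gives $[a_0,\ldots,a_{k-1},w]=T(w)$ for the M\"obius map $T(w)=\frac{A_{k-1}w+A_{k-2}}{B_{k-1}w+B_{k-2}}$. Purely periodic tails force the value of the continued fraction to satisfy $T(w)=w$, which is exactly the degree-two relation of the statement; its solutions are the fixed points $\alpha_1,\alpha_2$. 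Since the convergents taken at the end of each successive period are the iterates $T^{m}$ applied to a fixed seed (one checks $A_{mk-2}/B_{mk-2}=T^{m}(0)$ and $A_{mk-1}/B_{mk-1}=T^{m}(\infty)$), and the finitely many intermediate convergents of a period are continuous images of these, convergence of $[a_0,a_1,\ldots]$ is equivalent to convergence of the orbit of $T$.

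I would then classify the dynamics of $T$ by its multiplier. Differentiating gives $T'(\alpha_i)=(-1)^{k}/(B_{k-1}\alpha_i+B_{k-2})^2$, and since the product of the multipliers of a M\"obius map at its two fixed points equals $1$, one has $T'(\alpha_1)T'(\alpha_2)=1$; set $\lambda=T'(\alpha_1)$. Conjugating by $S(z)=\frac{z-\alpha_1}{z-\alpha_2}$ (assuming the roots are finite and distinct) turns $T$ into the dilation $z\mapsto\lambda z$, for which every orbit with seed different from the repelling fixed point converges to the attracting one when $|\lambda|\neq1$, and fails to converge (the elliptic case) when $|\lambda|=1$, $\lambda\neq1$. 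The elliptic case is precisely the situation illustrated by the negative-discriminant example \eqref{Eq: negdis}, where the convergents cannot settle in the complete field $\mathbb{C}$. When $\alpha_1=\alpha_2$, the map is parabolic, $STS^{-1}$ is a nonzero translation, and every orbit tends to the single fixed point: this is case (i) and yields convergence to $\alpha_1=\alpha_2$.

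It remains to recognize the hyperbolic/loxodromic convergent case in the intrinsic terms of the statement. The key point is that $|\lambda|<1$ is equivalent to $\alpha_1$ being the attractor, which in the coordinate $S$ means the orbit is drawn toward the region $\{|S(z)|<1\}=\{|z-\alpha_1|<|z-\alpha_2|\}$; evaluating this at the concrete convergent $A_{k-2}/B_{k-2}=T(0)$ produces exactly the inequality $\bigl|\alpha_2-\tfrac{A_{k-2}}{B_{k-2}}\bigr|>\bigl|\alpha_1-\tfrac{A_{k-2}}{B_{k-2}}\bigr|$ of case (ii), the limit then being $\alpha_1$. The side condition $A_n/B_n\neq\alpha_2$ for $n=0,\ldots,k-3$ is what guarantees that no seed of the iteration (nor any intermediate convergent of the first period) sits on the repelling fixed point, where the orbit would stall and the criterion would break down; up to relabelling $\alpha_1\leftrightarrow\alpha_2$ this captures every convergent case. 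Finally I would promote convergence of the period-endpoint subsequence to the full sequence $\{A_n/B_n\}$ by applying the continuous partial-period maps, since there are only finitely many residues modulo $k$.

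The step I expect to be the true obstacle is the reconciliation in the last paragraph: turning the purely dynamical dichotomy $|\lambda|\lessgtr1$ into the extrinsic inequality written in terms of the convergent $A_{k-2}/B_{k-2}$, and matching the degenerate hypotheses (finiteness of $\alpha_1,\alpha_2$ and the exclusions $A_n/B_n\neq\alpha_2$) to the exact boundary behaviour of the orbit. This is precisely where the special relations forced by the continued-fraction recursion — in particular $\det M=(-1)^{k}$ and the product-of-multipliers identity, which together give $(B_{k-1}\alpha_1+B_{k-2})(B_{k-1}\alpha_2+B_{k-2})=(-1)^{k}$ — must be exploited so that the stated condition is necessary and not merely sufficient.
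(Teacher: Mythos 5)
Your strategy --- encoding one period as the M\"obius map $T(w)=\frac{A_{k-1}w+A_{k-2}}{B_{k-1}w+B_{k-2}}$, classifying by the multiplier $\lambda=T'(\alpha_1)$, and straightening $T$ by $S(z)=\frac{z-\alpha_1}{z-\alpha_2}$ --- is the standard route and is essentially the argument underlying Wall's own proof; note that the paper gives no proof of this statement (it is quoted from \cite{WALL}), so there is no alternative route to compare against. The identities you isolate ($\det M=(-1)^k$, $T'(\alpha_1)T'(\alpha_2)=1$, hence $(B_{k-1}\alpha_1+B_{k-2})(B_{k-1}\alpha_2+B_{k-2})=(-1)^k$) are correct, as are the parabolic and elliptic cases and the reduction of the full sequence of convergents to the $k$ orbits $T^m(A_j/B_j)$.

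The gap sits exactly at the step you flag as the obstacle, and it is not just a missing computation: the identification fails as you state it. Since $S\circ T=\lambda\,S$, one has $S\bigl(\tfrac{A_{k-2}}{B_{k-2}}\bigr)=S(T(0))=\lambda\,S(0)=\lambda\,\tfrac{\alpha_1}{\alpha_2}$, so the inequality $\bigl|\alpha_2-\tfrac{A_{k-2}}{B_{k-2}}\bigr|>\bigl|\alpha_1-\tfrac{A_{k-2}}{B_{k-2}}\bigr|$ is equivalent to $|\lambda|<|\alpha_2/\alpha_1|$, not to $|\lambda|<1$; your heuristic that the orbit is ``drawn toward'' the half-plane $|z-\alpha_1|<|z-\alpha_2|$ does not apply to a single iterate, since an attracted seed need not cross the bisector after one step. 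Equivalently, writing $\mu_i=B_{k-1}\alpha_i+B_{k-2}$ one computes $\alpha_1-\tfrac{A_{k-2}}{B_{k-2}}=\tfrac{\alpha_1\mu_2}{B_{k-2}}$ and $\alpha_2-\tfrac{A_{k-2}}{B_{k-2}}=\tfrac{\alpha_2\mu_1}{B_{k-2}}$, so the stated inequality reads $|\alpha_2\mu_1|>|\alpha_1\mu_2|$, whereas the attracting condition is $|\mu_1|>|\mu_2|$. The clean equivalence holds at the \emph{other} seed: $S(\infty)=1$, hence $S\bigl(\tfrac{A_{k-1}}{B_{k-1}}\bigr)=S(T(\infty))=\lambda$, and $\bigl|\alpha_2-\tfrac{A_{k-1}}{B_{k-1}}\bigr|>\bigl|\alpha_1-\tfrac{A_{k-1}}{B_{k-1}}\bigr|$ is exactly $|\lambda|<1$. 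A concrete check: $[\overline{16/7,\,7/9,\,-23/14}]$ has fixed points $\alpha_1=10$, $\alpha_2=1$ with $\mu_1=-2$, $\mu_2=\tfrac12$, so it converges to $10$; yet $A_1/B_1=25/7$ is closer to $1$ than to $10$, so the criterion written with $A_{k-2}/B_{k-2}$ picks out the wrong root, while $A_2/B_2=41/5$ satisfies the correct inequality with ratio $1.8/7.2=|\lambda|=\tfrac14$. In Wall's conventions the period acts as $w\mapsto\frac{A_{p-1}w+A_p}{B_{p-1}w+B_p}$, so his $A_{p-1}/B_{p-1}$ is $T(\infty)$; under the indexing of this paper that convergent is $A_{k-1}/B_{k-1}$, so the statement as transcribed carries an index shift, and your argument, completed via $S(\infty)=1$, proves (and should be aimed at) the $A_{k-1}/B_{k-1}$ version.
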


We now prove Proposition \ref{Pro: Conv} by verifying that Condition $ii)$ of Theorem \ref{Thm: Wall} is satisfied by \textit{Browkin I}, \textit{Browkin II} and Algorithm \eqref{Alg: MR} $p$--adic continued fractions. Alternatively, it would be possible to use a technique similar to \cite[Theorem 4.3]{BEJ}.

\begin{proof}[Proof of Proposition \ref{Pro: Conv}]
Let us consider the periodic $p$--adic continued fraction
\[[a_0,\ldots,a_{h-1},\overline{a_h,\ldots,a_{h+k-1}}],\] obtained for the quadratic irrational $\alpha^{(p)}$ by means of either \textit{Browkin I}, \textit{Browkin II} or Algorithm \eqref{Alg: MR}. We know that $[a_0,a_1,\ldots]$ converges to $\alpha^{(p)}$ in $\mathbb{Q}_p$ by the correctness of the algorithms. Now we show that its periodic part satisfies the hypothesis $ii)$ of Theorem \ref{Thm: Wall} and, hence, that $[a_0,a_1,\ldots]$ converges to either $\alpha^{(r)}$ or $\overline{\alpha}^{(r)}$ in $\mathbb{R}$. In fact, the convergence is clearly not affected by the presence of the pre-period, and the periodicity gives raise to the same minimal polynomial of $\alpha^{(p)}$. Therefore, in the following we consider for simplicity $h=0$, i.e. the purely periodic continued fraction $[\overline{a_0,\ldots,a_{k-1}}]$. Notice that in order to satisfy condition $ii)$, since all the partial quotients are real numbers, it is sufficient to prove that it never happens that
\begin{equation}\label{Eq: FINAL}
\alpha_2^{(r)}-\frac{A_{k-1}}{B_{k-1}}=-\alpha_1^{(r)}+\frac{A_{k-1}}{B_{k-1}},
\end{equation}
where $\alpha_1^{(r)},\alpha_2^{(r)}$ are the two distinct real roots of
\begin{equation}\label{Eq: charfina}
B_{k-1}x^2+(B_{k-2}-A_{k-1})x-A_{k-2}=0.
\end{equation}

Recalling that
\[\alpha_1^{(r)}+\alpha_2^{(r)}=\frac{A_{k-1}-B_{k-2}}{B_{k-1}},\]
we get that Condition \eqref{Eq: FINAL} is equivalent to
\[\frac{2A_{k-1}}{B_{k-1}}=\alpha_1^{(r)}+\alpha_2^{(r)}=\frac{A_{k-1}-B_{k-2}}{B_{k-1}},\]
that can be rewritten as
\[\frac{A_{k-1}+B_{k-2}}{B_{k+1}}=0.\]
Therefore, \eqref{Eq: FINAL} holds if and only if 
\begin{equation}\label{Eq: finally}
A_{k-1}+B_{k-2}=0.
\end{equation}
Notice that if the latter holds then $k\neq 1$, as otherwise it would imply 
\[a_0=A_0=-B_{-1}=0,\]
that would correspond to the purely periodic continued fraction $[\overline{0}]$.
By hypothesis $\alpha_1^{(r)}$ and $\alpha_2^{(r)}$ are real and distinct, so that \eqref{Eq: charfina} has positive discriminant, i.e.
\[D=(B_{k-2}-A_{k-1})^2+4A_{k-2}B_{k-1}>0.\]
Rearranging the latter expression and exploiting \eqref{Eq: converelation} and \eqref{Eq: finally}, we obtain that $ii)$ is not satisfied if and only if
\begin{align*}
D&=(B_{k-2}-A_{k-1})^2+4A_{k-2}B_{k-1}=\\&=(B_{k-2}+A_{k-1})^2-4A_{k-1}B_{k-2}+4A_{k-2}B_{k-1}=4\cdot(-1)^{k-1}>0.
\end{align*}
Hence, whenever $D>0$, \eqref{Eq: FINAL} can hold only if the period $k$ is odd. The period length $k$ is always even for \textit{Browkin II}, therefore condition $ii)$ of Theorem \ref{Thm: Wall} is always satisfied for this algorithm. Moreover notice that, if the period length is odd in Algorithm \eqref{Alg: MR}, then all its partial quotients have negative valuations. The reason is that all the odd partial quotients in Algorithm \eqref{Alg: MR} have negative valuation, therefore if $v_p(a_i)=0$ for some even $i$, then by the periodicity also $a_{i+k}=a_i$ has zero valuation, that is not possible as $i+k$ is an odd index. Now, observe that $A_{k-1}=-B_{k-2}$ implies $v_p(A_{k-1})=v_p(B_{k-2})$. It is known that
\begin{align*}
v_p(A_{k-1})&=v_p(a_0)+v_p(a_1)+\ldots+v_p(a_{k-1}),\\
v_p(B_{k-2})&=v_p(a_1)+v_p(a_2)+\ldots+v_p(a_{k-2}).
\end{align*}
For the latter equalities see, for example, \cite[Lemma 1]{BI} and \cite[Remark 4.1]{R}.
It follows that $v_p(A_{k-1})=v_p(B_{k-2})$ only if
\begin{equation}
v_p(a_0)+v_p(a_{k-1})=0.
\end{equation}
This equation can not be satisfied by \textit{Browkin I} as, by construction, $v_p(a_0)\leq0$ and $v_p(a_{k-1})<0$, and we have seen that $k\neq1$ if \eqref{Eq: finally} holds. Therefore, \eqref{Eq: FINAL} is never satisfied for this algorithm. It remains to prove that the hypothesis $ii)$ of Theorem \ref{Thm: Wall} holds for Algorithm \eqref{Alg: MR}. We proved that Equation \eqref{Eq: FINAL} implies that the period length $k$ is odd, in the case of $D>0$. However, we showed also that, for odd $k$, all partial quotients in Algorithm \eqref{Alg: MR} have negative valuation. This implies that $v_p(a_0)+v_p(a_{k-1})<0$, so that $A_{k-1}\neq -B_{k-2}$, as we have seen before for \textit{Browkin I}. Therefore, \eqref{Eq: FINAL} can never be satisfied for \textit{Browkin I}, \textit{Browkin II} and Algorithm \eqref{Alg: MR}. It follows that the condition $ii)$ of Theorem \ref{Thm: Wall} is always satisfied for the $p$-adic continued fractions generated by these algorithms, and this proves the claim.
\end{proof}

From the computational results leading to Conjecture \ref{Conj: BroConv}, we see that the $p$--adic continued fraction of a quadratic irrational seems to always converge to a real number. Moreover, from Proposition \ref{Pro: Conv}, when its $p$--adic continued fraction is periodic, it converges in $\mathbb{R}$ to a root of the same quadratic polynomial. However, in cases where periodicity is not recognized, the continued fraction seems  to converge very neatly ``somewhere else" in $\mathbb{R}$. This means that, for those $p$--adic quadratic irrationals $\alpha^{(p)}$ that we believe to be non-periodic (based on brute-force search of the periodicity for a large number of steps), the continued fraction is approaching really well a real number that is completely different from $\alpha^{(r)}$ or $\overline{\alpha}^{(r)}$. Since this is a necessary condition for the periodicity of the $p$--adic continued fraction of $\alpha^{(p)}$, by Proposition \ref{Pro: Conv}, proving the convergence to another real number would be an example of a non-periodic Browkin's continued fraction. However, we are not able to exhibit such an example. Even if the continued fractions that are apparently non-periodic seem to converge really well to another real number, it remains a (very convincing, but heuristic) computational observation. All the computational results are listed in Section \ref{Sec: computations}.\bigskip

Also, we did not observe any continued fraction of $\alpha^{(p)}$ that seems to converge in $\mathbb{R}$ to $\alpha^{(r)}$ or $\overline{\alpha}^{(r)}$ and does not become periodic. If it does not show a period, the continued fraction is converging really well but really far from $\alpha$ in Euclidean absolute value. Therefore, it seems likely that Conjecture \ref{Conj: realconv} is true, and solving it would provide a really nice characterization of periodic $p$--adic continued fractions.\smallskip

The difficult part of Conjecture \ref{Conj: realconv} is to prove that if the $p$-adic continued fraction of the quadratic irrational $\alpha^{(p)}$ converges in $\mathbb{R}$ to either $\alpha^{(r)}$ or $\overline{\alpha}^{(r)}$, then it becomes periodic at some point. We leave the following two conjectures, that are supported by experimental computations (part of them is reported in Section \ref{Sec: computations}).

\begin{Conjecture}\label{Conj: conv2}
Let $[a_0,a_1,\ldots]$, where $a_n\in\mathbb{Q}$ for all $n$. If $[a_0,a_1,\ldots]$ converge simultaneously in $\mathbb{R}$ and $\mathbb{Q}_p$, respectively to a real quadratic irrational $\alpha^{(r)}$ and its image $\alpha^{(p)}$ inside $\mathbb{Q}_p$, then the continued fraction $[a_0,a_1,\ldots]$ is periodic.
\end{Conjecture}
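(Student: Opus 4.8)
The plan is to run a Lagrange/Galois-type finiteness argument, but measuring the complete quotients simultaneously at the real and at the $p$-adic place, and closing up with the product formula. Write the minimal polynomial of $\alpha$ as $f(x)=ax^2+bx+c\in\mathbb{Z}[x]$ with discriminant $D=b^2-4ac>0$, so that $\alpha^{(r)},\overline{\alpha}^{(r)}$ and $\alpha^{(p)},\overline{\alpha}^{(p)}$ are the two embeddings of the field $K=\mathbb{Q}(\sqrt{D})$. First I would define the complete quotients as \emph{algebraic} numbers $\alpha_n\in K$ by solving the M\"obius relation $\alpha=\frac{\alpha_nA_{n-1}+A_{n-2}}{\alpha_nB_{n-1}+B_{n-2}}$ over $\mathbb{Q}$, so that
\begin{equation}
\alpha_n=\frac{A_{n-2}-\alpha B_{n-2}}{\alpha B_{n-1}-A_{n-1}}
\end{equation}
has the tail $[a_n,a_{n+1},\ldots]$ as both its real value (which converges since $\sum_k|a_k|$ diverges, by \cite[Theorem 6.1]{WALL}) and its $p$-adic value (which converges by hypothesis). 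Writing $\alpha_n=\frac{P_n+\sqrt{D}}{Q_n}$ with $P_n,Q_n\in\mathbb{Q}$, the goal becomes to show that the set $\{\alpha_n\}_n$ is finite, i.e. that $(P_n,Q_n)$ takes finitely many values.

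To produce the finiteness I would pass to the binary quadratic form $F(x,y)=ax^2+bxy+cy^2$ attached to $f$ and track $N_n:=F(A_n,B_n)$ together with the polar value $L_n$ of $F$ on the two consecutive convergent vectors $(A_{n-1},B_{n-1})$ and $(A_n,B_n)$. Since the matrix built from two consecutive convergents is unimodular by \eqref{Eq: converelation}, the form has a conserved discriminant along the orbit, giving the invariant
\begin{equation}
L_n^2-4N_{n-1}N_n=D,
\end{equation}
together with the coupled recursions $L_n=2a_nN_{n-1}+L_{n-1}$ and $N_n=a_n^2N_{n-1}+a_nL_{n-1}+N_{n-2}$. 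A short computation using the expression for $\alpha_n$ above and $f(\alpha)=0$ identifies $Q_n$ with $\pm2N_{n-1}$ and $P_n$ with a multiple of $L_{n-1}$ (up to the normalisation of $P_n,Q_n$); by the invariant, boundedness of $\{N_n\}$ forces boundedness of $\{L_n\}$, so the whole problem reduces to confining $N_n$ to a finite set.

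The two convergence hypotheses are what I would use to confine $N_n$. Factoring $N_n=a\,(A_n-\alpha^{(r)}B_n)(A_n-\overline{\alpha}^{(r)}B_n)$ and using the convergent identity $\alpha^{(r)}-A_n/B_n=\pm\big(B_n(\alpha_{n+1}^{(r)}B_n+B_{n-1})\big)^{-1}$, one gets
\begin{equation}
|N_n|=\frac{|a|\,|\alpha^{(r)}-\overline{\alpha}^{(r)}|}{\big|\alpha_{n+1}^{(r)}+B_{n-1}/B_n\big|}\,\big(1+o(1)\big),
\end{equation}
so the Archimedean size of $N_n$ is bounded except when $\alpha_{n+1}^{(r)}\approx-B_{n-1}/B_n$, which is precisely the near-cancellation responsible for a small partial quotient and a large real deviation. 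The same factorisation at $p$ controls $v_p(N_n)$ through $\big|\alpha_{n+1}^{(p)}+B_{n-1}/B_n\big|_p$. Since $N_n\in\mathbb{Q}^{\times}$, the idea is then to feed both estimates into the product formula $\prod_v|N_n|_v=1$ and conclude that the height of $\alpha_n$ in the fixed field $K$ is bounded, so that Northcott's theorem yields only finitely many $\alpha_n$ and hence a repetition $\alpha_m=\alpha_n$.

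The hard part will be exactly this global coupling, and I expect it to be the genuine obstacle, which is why the statement is only conjectural. Real convergence alone does \emph{not} bound $|N_n|$: it only requires $|N_n|/B_n^2\to0$, so $|N_n|$ may tend to $+\infty$ provided it grows more slowly than $B_n^2$, and dually $|N_n|_p$ may tend to $0$; these two escapes are consistent with the product formula unless one also controls the remaining finite places $q\neq p,\infty$, about which the hypotheses say nothing, since no information on $A_n/B_n$ modulo $q$ is available. The one situation where those places are automatically harmless is the \emph{Browkin-type} case, where the partial quotients have $p$-power denominators, whence $A_n,B_n\in\mathbb{Z}[1/p]$ and $|N_n|_q\le1$ for every $q\neq p$, making the product formula usable. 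Finally, even once $\{\alpha_n\}$ is known to be finite, extracting genuine periodicity of the sequence $(a_n)$ requires that a repeated complete quotient force a repeated partial quotient; for an algorithmic expansion the floor function supplies this link, so I would close the argument by showing that simultaneous real and $p$-adic convergence forces each $a_n$ to coincide with the relevant $p$-adic floor of $\alpha_n$, turning the value-repetition $\alpha_m=\alpha_n$ into a true period.
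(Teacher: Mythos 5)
The first thing to note is that the paper does not prove this statement at all: it is left as Conjecture \ref{Conj: conv2}, supported only by the heuristics of Section \ref{Sec: prob} and the numerical evidence of Section \ref{Sec: computations}. So there is no proof of record to compare yours against, and the relevant question is whether your proposal closes the conjecture. It does not, and to your credit you flag the main obstruction yourself; but the gaps are genuine and fatal, so what you have is a programme rather than a proof. The overall shape (track $N_n=F(A_n,B_n)$ and the polar value $L_n$, use the unimodularity \eqref{Eq: converelation} to get the invariant $L_n^2-4N_{n-1}N_n=D$, and try to confine $N_n$ to a finite set) is the natural Lagrange-type reduction, and it is essentially the strategy of \cite{CVZ} and of Proposition \ref{Prop: cmt}; the whole difficulty of the subject is that nobody knows how to bound $N_n$ once partial quotients may be negative.

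Concretely, three steps fail. First, real convergence of $A_n/B_n$ to $\alpha^{(r)}$ only gives $|A_n-\alpha^{(r)}B_n|=o(|B_n|)$, hence $|N_n|=o(B_n^2)$, and nothing prevents $|N_n|\to\infty$; the classical proof extracts boundedness of $F(A_n,B_n)$ from the approximation quality $|\alpha-A_n/B_n|\le 1/B_n^2$, which rests on $|B_{n+1}|\ge |B_n|$ --- exactly what is lost here. Second, the product formula cannot be closed for the statement as written: in Conjecture \ref{Conj: conv2} the $a_n$ are arbitrary rationals, so $A_n,B_n$ need not lie in $\mathbb{Z}[1/p]$ and the places $q\neq p,\infty$ are uncontrolled; your own observation that only Browkin-type denominators make those places harmless concedes that your argument does not address the hypotheses of the conjecture. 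Third, even granting finiteness of the set of tail values $\{\alpha_n\}$, periodicity of the sequence $(a_n)$ does not follow: there is no algorithm in the hypotheses, so $a_n$ is not a function of $\alpha_n$, and a given quadratic irrational admits many distinct expansions with rational partial quotients; a value repetition $\alpha_m=\alpha_n$ therefore does not propagate, and your proposed fix (that simultaneous convergence forces $a_n$ to be the Browkin floor of $\alpha_n$) is simply false for general rational $a_n$. A smaller but real issue: the criterion you invoke from \cite[Theorem 6.1]{WALL} concerns positive partial quotients, so convergence of $[a_0,a_1,\ldots]$ with sign-changing rational $a_n$ does not by itself guarantee that every tail $[a_n,a_{n+1},\ldots]$ converges in $\mathbb{R}$ (compare Conjecture \ref{Conj: BroConv}, which is itself open), so even defining $\alpha_{n}^{(r)}$ as the real value of the tail needs justification.
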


\begin{Conjecture}\label{Conj: conv3}
Let $[a_0,a_1,\ldots]$ the $p$--adic continued fraction, obtained with either \textit{Browkin I}, \textit{Browkin II} or Algorithm \eqref{Alg: MR}, of a quadratic irrational $\alpha^{(p)}$ that has an image $\alpha$ inside $\mathbb{R}$. If $[a_0,a_1,\ldots]$ converge to either $\alpha^{(r)}$ or $\overline{\alpha}^{(r)}$ in $\mathbb{R}$, then the continued fraction is periodic.
\end{Conjecture}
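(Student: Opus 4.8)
The plan is to follow the strategy behind Lagrange's theorem and its Ruban analogue in \cite{CVZ}: the \textit{Browkin}-type continued fraction is eventually periodic if and only if the complete quotients $\alpha_n=\frac{P_n+\sqrt{D}}{Q_n}$ of \eqref{Eq: quadirra} assume only finitely many values, so (the algorithm being deterministic) it suffices to bound $|P_n|$ and $|Q_n|$ by a constant depending only on $D$. As recalled in \eqref{Eq: CVZ2}, the work \cite{CVZ} gets this bound for free from the sign condition $\alpha_n^{(r)}\overline{\alpha}_n^{(r)}<0$, which forces $P_n^2<D$ and hence, via $Q_n\mid D-P_n^2$, also $|Q_n|<D$. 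For \textit{Browkin}-type expansions the partial quotients can be negative, this sign condition fails, and the bound must instead be extracted from the real convergence hypothesis.

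The first step is to turn that hypothesis into an explicit formula for $Q_n$. Inverting $\alpha^{(r)}=\frac{\alpha_n^{(r)}A_{n-1}+A_{n-2}}{\alpha_n^{(r)}B_{n-1}+B_{n-2}}$, and doing the same with $\overline{\alpha}^{(r)}$, expresses the two real images of $\alpha_n$ as M\"obius transforms of $\alpha^{(r)}$ and $\overline{\alpha}^{(r)}$; subtracting them and simplifying with \eqref{Eq: converelation} yields
\[
\alpha_n^{(r)}-\overline{\alpha}_n^{(r)}=\frac{2\sqrt{D}}{Q_n}=\frac{(-1)^{n}\,(\alpha^{(r)}-\overline{\alpha}^{(r)})}{B_{n-1}^2\,(\alpha^{(r)}-A_{n-1}/B_{n-1})(\overline{\alpha}^{(r)}-A_{n-1}/B_{n-1})}.
\]
Assuming (as we may, the other case being symmetric) that $A_n/B_n\to\alpha^{(r)}$, the factor $\overline{\alpha}^{(r)}-A_{n-1}/B_{n-1}$ tends to the nonzero constant $\overline{\alpha}^{(r)}-\alpha^{(r)}$, and the identity collapses to the asymptotic relation
\[
|Q_n|\ \asymp\ B_{n-1}^2\,\Bigl|\alpha^{(r)}-\frac{A_{n-1}}{B_{n-1}}\Bigr|.
\]
In words, the size of the complete quotient $\alpha_n$ is dictated by how well the convergent $A_{n-1}/B_{n-1}$ approximates the real root $\alpha^{(r)}$.

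This turns the conjecture into a purely Diophantine statement. Since $\alpha^{(r)}$ is a real quadratic irrational, Liouville's inequality $|\alpha^{(r)}-p/q|\ge c/q^2$ gives $|Q_n|\gtrsim c>0$, so the lower bound is automatic and carries no information about periodicity. The entire difficulty is the matching \emph{upper} bound $|Q_n|=O(1)$, equivalent to the convergents attaining the optimal quadratic approximation rate $|\alpha^{(r)}-A_{n-1}/B_{n-1}|=O(B_{n-1}^{-2})$ for every $n$. Such a bound on $|Q_n|$ would then bound $|P_n|$ through $Q_nQ_{n-1}=D-P_n^2$, while the $p$--adic sizes of $P_n,Q_n$ are already controlled by the algorithm (one has $|\alpha_n|_p\ge p$), so that $\{\alpha_n\}$ would be finite and the expansion periodic.

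Proving this uniform quadratic approximation rate is exactly where I expect the argument to stall, and it is the reason the conjecture is left open. A single badly approximating convergent, with $|\alpha^{(r)}-A_{n-1}/B_{n-1}|\gg B_{n-1}^{-2}$, forces $|Q_n|\to\infty$ and destroys periodicity; through the recursions $P_{n+1}=a_nQ_n-P_n$ and $Q_{n+1}=(D-P_{n+1}^2)/Q_n$ such an escape happens precisely when a partial quotient $a_n$ is atypically small in Euclidean absolute value, since a small $|a_n|$ stops the denominators from growing and keeps the two real images $\alpha_n^{(r)},\overline{\alpha}_n^{(r)}$ from separating. For \textit{Browkin I} one always has $|a_n|<p/2$, and the heuristic behind \eqref{Eq: Ea} and Section~\ref{Sec: prob} makes small partial quotients rare when $p$ is large; but this only concerns a \emph{random} quadratic irrational, and there is presently no way to exclude such behaviour for a prescribed $\alpha$. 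Supplying a deterministic quadratic-approximation estimate to replace the unavailable sign condition of \cite{CVZ} is the crux that separates Conjecture~\ref{Conj: conv3} from a theorem.
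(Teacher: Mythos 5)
The statement you were asked to prove is Conjecture \ref{Conj: conv3}, which the paper explicitly leaves open: there is no proof in the text to compare against, only the heuristic and computational evidence of Sections \ref{Sec: realconve}--\ref{Sec: computations}. Your submission is accordingly not a proof but a strategy sketch, and you say so yourself; as a referee I can only confirm that the gap you flag is genuine and is the same one the paper runs into. Your reduction is sound as far as it goes: the identity
\[
\alpha_n^{(r)}-\overline{\alpha}_n^{(r)}=\frac{2\sqrt{D}}{Q_n}
=\frac{(-1)^{n}(\alpha^{(r)}-\overline{\alpha}^{(r)})}{B_{n-1}^2\left(\alpha^{(r)}-A_{n-1}/B_{n-1}\right)\left(\overline{\alpha}^{(r)}-A_{n-1}/B_{n-1}\right)}
\]
is the standard Lagrange-type computation, and it correctly converts the hypothesis ``$A_n/B_n\to\alpha^{(r)}$'' into the statement $|Q_n|\asymp B_{n-1}^2\,|\alpha^{(r)}-A_{n-1}/B_{n-1}|$. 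This is a sharper and more quantitative version of the paper's own observation (the Remark following Proposition \ref{Prop: cmt}) that any uniform bound of the type \eqref{Eq: utopia} on $\alpha\overline{\alpha}$ would bound $P_n$ and $Q_n$ and force periodicity, replacing the unavailable sign condition \eqref{Eq: CVZ} of \cite{CVZ}.

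The step that fails is exactly the one you isolate, and it is worth being explicit about why it cannot be repaired from the hypothesis alone: convergence of $A_n/B_n$ to $\alpha^{(r)}$ carries no rate whatsoever, so it does not yield the uniform quadratic estimate $|\alpha^{(r)}-A_{n-1}/B_{n-1}|=O(B_{n-1}^{-2})$ needed for $|Q_n|=O(1)$; a subsequence of slowly approximating convergents is compatible with convergence and sends $|Q_n|\to\infty$. Two further points deserve care if you pursue this. First, the Liouville lower bound must be applied to $A_{n-1}/B_{n-1}$ written in lowest terms; since the $A_n,B_n$ of a Browkin-type expansion are rationals with $p$-power denominators, the reduced denominator need not be comparable to $|B_{n-1}|$, so even the ``automatic'' lower bound on $|Q_n|$ requires an argument. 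Second, finiteness of the set $\{\alpha_n\}$ needs not only Euclidean bounds on $P_n,Q_n$ but also a uniform bound on their $p$-adic denominators, which does follow from the valuation constraints of the algorithms (as in \cite{CMT}) but should be stated. With those caveats, your write-up is an accurate diagnosis of why the conjecture is open rather than a proof of it.
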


Conjecture \ref{Conj: conv2} implies Conjecture \ref{Conj: conv3}, as we are giving more restrictive hypotheses. In fact, in Conjecture \ref{Conj: conv3} we have a more precise shape of the rational numbers of the continued fractions, because they are ``legal" partial quotients of \textit{Browkin I}, \textit{Browkin II} or Algorithm \eqref{Alg: MR}.

\begin{Example}\label{Exa: convereal}
Let us consider $\sqrt{19}$ in $\mathbb{Q}_5$. The polynomial $x^2-19$ has a root, and hence two roots, in $\mathbb{Q}_5$. This is the first and simplest example of $\sqrt{D}$, with $D>0$ non-square integer that is a quadratic residue modulo $5$, such that \textit{Browkin I} and \textit{Browkin II} continued fractions seem to be non-periodic. The expansions of $\sqrt{14}$ and $\sqrt{21}$, that are the quadratic residues modulo $5$ nearest to $\sqrt{19}$, have expansions
\begin{align*}
\sqrt{14}=&\left[2,-\frac{3}{5},\overline{-\frac{9}{5},-\frac{6}{5},\frac{166}{125},-\frac{6}{5},-\frac{9}{5},-\frac{8}{5}}\right],\\
\sqrt{21}=&\left[1,\frac{3}{5},\overline{\frac{3}{5},-\frac{4}{5},\frac{3}{5},-\frac{7}{5},\frac{26}{25},-\frac{7}{5}}\right],
\end{align*}
with \textit{Browkin I} and
\begin{align*}
\sqrt{14}=&\left[2,\overline{\frac{2}{5},-1,-\frac{1}{5},2,-\frac{1}{5},-1}\right],\\
\sqrt{21}=&\left[1,\overline{-\frac{2}{5},1,\frac{2}{5},2,\frac{1}{5},-2,\frac{2}{5},-2,\frac{1}{5},2,\frac{2}{5},1,-\frac{2}{5},2}\right],
\end{align*}
with \textit{Browkin II}. For $\sqrt{19}$, we did not find any period with a search up to $50.000$ complete quotients. It seems really unlikely to have periodicity of small square roots for small primes $p$ with such a long period. In fact, ``small" square roots tend to have short periods whenever they are periodic, as it happens for real continued fractions and as we can see for $\sqrt{14}$ and $\sqrt{21}$. This observation already suggests that is really unlikely for $\sqrt{19}$ to have a periodic continued fraction with \textit{Browkin I} and \textit{Browkin II}. Also, if we look at the convergents, we can notice the following. For the \textit{Browkin I} expansion of $\sqrt{19}$,
\begin{align*}
\frac{A_{10}}{B_{10}}&= 1.35736553571026\ldots,\\
\frac{A_{100}}{B_{100}}&=1.35738766711068\ldots,\\
\frac{A_{1000}}{B_{1000}}&=1.35738766711068\ldots,\\
\frac{A_{5000}}{B_{5000}}&=1.35738766711068\ldots,\\
\frac{A_{10000}}{B_{10000}}&=1.35738766711068\ldots,\\
\end{align*}
and this approximation just gets better going forward with the expansion. It seems that the \textit{Browkin I} continued fraction of $\sqrt{19}$ is converging to a real number which is, however, different from $\sqrt{19}\approx 4.36$. Proving that this expansion is not actually convergent to something near $4.36$ would be a counterexample for the periodicity of \textit{Browkin I}, by Proposition \ref{Pro: Conv}. However, we did not manage to prove it for $\sqrt{19}$ or for any other quadratic irrational. Similar results are true for \textit{Browkin II}, where the convergents of $\sqrt{19}$ are
\begin{align*}
\frac{A_{10}}{B_{10}}&= 2.57225268855495\ldots,\\
\frac{A_{100}}{B_{100}}&=1.89462102495469\ldots,\\
\frac{A_{1000}}{B_{1000}}&=1.89443989021177\ldots,\\
\frac{A_{5000}}{B_{5000}}&=1.89443989021177\ldots,\\
\frac{A_{10000}}{B_{10000}}&=1.89443989021177\ldots.\\
\end{align*}

Also \textit{Browkin II} convergents of $\sqrt{19}$ seem clearly converging to some real number. This real number, also in this case, is different from $\sqrt{19}\approx 4.36$. With our algorithm \eqref{Alg: MR}, $\sqrt{19}$ has periodic expansion
\[\sqrt{19}= \left[2,\overline{-\frac{2}{5},2,\frac{1}{5},-2,-\frac{2}{5},-\frac{12}{5},\frac{2}{5},-2,\frac{8}{25},2,\frac{1}{5},-1,-\frac{2}{5},-\frac{8}{5},\frac{2}{5},-2,\frac{12}{25},2,\frac{2}{5},-1} \right].
\]
For $\sqrt{39}$ we did not observed any period up to $50.000$ steps with Algorithm \eqref{Alg: MR}. As for Browkin's algorithms, the convergents seem to approach very neatly some real number. In fact,
\begin{align*}
\frac{A_{10}}{B_{10}}&= 3.11589405199643\ldots,\\
\frac{A_{100}}{B_{100}}&=3.24461192422490\ldots,\\
\frac{A_{1000}}{B_{1000}}&=3.23880830293096\ldots,\\
\frac{A_{5000}}{B_{5000}}&=3.23880830293096\ldots,\\
\frac{A_{10000}}{B_{10000}}&=3.23880830293096\ldots,\\
\end{align*}
that is far from $\sqrt{39}\approx 6.24$, to which they should converge if the continued fraction became periodic.
\end{Example}

It seems really likely, in Example \ref{Exa: convereal}, that the convergents of $\sqrt{19}$ and $\sqrt{39}$ are converging to something else rather than to the real value of $\sqrt{19}$ and $\sqrt{39}$. We can notice it also by looking at the difference between two consecutive partial quotients
\begin{equation}\label{Eq: diffconvergents}
\left|\frac{A_{n+1}}{B_{n+1}}-\frac{A_n}{B_n}\right|=\frac{1}{\left|B_n\right|\left|B_{n+1}\right|}.
\end{equation}

The denominators of the convergents are generated as in \eqref{Eq: Recursions}, therefore
\[B_{n+1}=a_{n+1}B_{n}+B_{n-1}.\]

For the continued fractions provided by Browkin's algorithms and Algorithm \eqref{Alg: MR}, the denominators $B_n$ are not always increasing. In fact, since the partial quotients are allowed to be negative, and they can be in general arbitrarily small, the sequence $B_n$ is not necessarily increasing. However, as it is true that $|B_n|$ can also be small, it does not usually happen.\smallskip

In fact, in order to have a small partial quotient, for example equal to $\frac{1}{p^5}$ in \textit{Browkin I}, at some step $n$, we should have

\[v_p(\alpha_{n-1}-a_{n-1})=v_p(c_1p+c_2p^2+\ldots)=5.\]

It means that $c_1=c_2=c_3=c_4=0$, where $c_1,c_2,c_3,c_4\in\left\{-\frac{p-1}{2},\ldots,\frac{p-1}{2} \right\}$ are representatives modulo $p$. However, as explained in the next section, it is widely believed that the $p$-adic digits of irrational numbers are uniformly distributed in $\{0,\ldots,p-1\}$, and hence in $\left\{-\frac{p-1}{2},\ldots,
\frac{p-1}{2} \right\}$. Therefore, the probability of having four consecutive $0$'s is $\frac{1}{p^4}$. The dominating term of $B_n$ is $a_1\cdot a_2\cdot\ldots\cdot a_n$. Therefore, when $|a_i|>1$, we expect to have an exponential growth of the $B_n$. Also, especially for big values of $p$, it is very rare to have $|a_i|<1$. In particular, notice that if in the $p$-adic expansion of
\[a_i=c_{-r}\frac{1}{p^r}+\ldots+c_{-1}\frac{1}{p}+c_0,\]
$|c_0|\geq 2$, then $|a_i|>1$.\smallskip

This observation  is the reason why, in the next section, we carry out a probabilistic argument. 

\section{A probabilistic approach to the real convergence}\label{Sec: prob}
In this section, we use Assumption \ref{Ass: equi} of uniform distribution of the digits in the $p$-adic expansion of a quadratic irrational. The aim is to build a probabilistic argument for the non-periodicity of $p$--adic continued fractions obtained with \textit{Browkin I}, \textit{Browkin II} and Algorithm \eqref{Alg: MR}. As already anticipated in the introduction, Assumption \ref{Ass: equi} is based on Borel's normal number theorem \cite{BOR}.

\begin{Definition}
Let $\alpha\in\mathbb{R}$ and let $b\geq 2$ be an integer. Then $\alpha$ is normal in base $b$ if every digit of the expansion of $\alpha$ in base $b$ has density $\frac{1}{b}$. We say that $\alpha$ is normal, or absolutely normal, if it is normal in every base $b\geq 2$.
\end{Definition}

In \cite{BOR}, Borel proved the following result.

\begin{Theorem}[Borel's Theorem]\label{Thm: Borel}
Almost all real numbers are normal, i.e. the set of non-normal numbers has Lebesgue measure $0$ in $\mathbb{R}$.
\end{Theorem}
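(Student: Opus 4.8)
The plan is to prove Borel's Theorem via the second Borel--Cantelli lemma, in the quantitative form that shows not only that each digit has the right density but that \emph{all} digits in \emph{all} bases simultaneously achieve their correct density on a set of full measure. Since a countable intersection of full-measure sets has full measure, it suffices to fix a single base $b\geq 2$ and a single digit $d\in\{0,\ldots,b-1\}$ and prove that almost every $\alpha\in[0,1)$ has the digit $d$ occurring with density $\frac{1}{b}$ in its base-$b$ expansion; the full statement follows by intersecting over the countably many pairs $(b,d)$. I would reduce to $\alpha\in[0,1)$ since normality is unaffected by the integer part, and I would work with the Lebesgue measure on $[0,1)$ throughout.

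First I would set up the digit functions. For $\alpha\in[0,1)$ with base-$b$ expansion $\alpha=\sum_{n=1}^{\infty} d_n(\alpha)\,b^{-n}$, define the indicator random variables $X_n=\mathbf{1}[d_n(\alpha)=d]$. The key probabilistic fact is that, under Lebesgue measure, the digits $d_n$ are independent and identically distributed, each uniform on $\{0,\ldots,b-1\}$; equivalently the $X_n$ are i.i.d.\ Bernoulli with $\mathbb{E}(X_n)=\frac{1}{b}$ and variance $\frac{1}{b}\bigl(1-\frac{1}{b}\bigr)$. This is verified by noting that $\{d_n=d\}$ is a union of $b^{n-1}$ intervals each of length $b^{-n}$, so it has measure $\frac{1}{b}$, and the joint distribution of $(d_1,\ldots,d_N)$ assigns equal measure $b^{-N}$ to each of the $b^N$ cylinders. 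The density of the digit $d$ is then $\lim_{N\to\infty}\frac{1}{N}\sum_{n=1}^{N}X_n$, and the goal is to show this limit equals $\frac{1}{b}$ almost everywhere.

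The heart of the argument is the strong law of large numbers for these i.i.d.\ bounded variables, which I would establish by Borel's original fourth-moment method rather than invoking a black box, to keep the proof self-contained. Setting $S_N=\sum_{n=1}^{N}\bigl(X_n-\tfrac{1}{b}\bigr)$, a direct expansion of $\mathbb{E}(S_N^4)$ using independence kills all terms containing a singleton factor, leaving only the diagonal and the pair terms, so that $\mathbb{E}(S_N^4)=O(N^2)$. By Markov's inequality, for any $\varepsilon>0$,
\begin{equation}
\mathbb{P}\!\left(\left|\tfrac{S_N}{N}\right|>\varepsilon\right)\leq \frac{\mathbb{E}(S_N^4)}{\varepsilon^4 N^4}=O\!\left(\frac{1}{\varepsilon^4 N^2}\right),
\end{equation}
and since $\sum_N N^{-2}<\infty$, the Borel--Cantelli lemma gives that $\left|\tfrac{S_N}{N}\right|>\varepsilon$ happens only finitely often almost surely. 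Taking $\varepsilon$ over a sequence tending to $0$ (e.g.\ $\varepsilon=\tfrac{1}{m}$, $m\in\mathbb{N}$) yields $\tfrac{S_N}{N}\to 0$ almost everywhere, which is exactly the statement that $d$ has density $\frac{1}{b}$.

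The main obstacle, if any, is not conceptual but one of bookkeeping: one must handle the measure-zero set of base-$b$ \emph{rationals}, whose expansions are eventually periodic and where the "independence" framing needs the convention of choosing the non-terminating expansion. Since these exceptional points form a countable (hence null) set, they can be discarded without affecting the conclusion. After establishing density $\frac{1}{b}$ for the fixed pair $(b,d)$ off a null set $N_{b,d}$, the full theorem follows by taking the union $\bigcup_{b,d} N_{b,d}$ over all countably many pairs, which remains null, so that every $\alpha$ outside this countable union is absolutely normal. This completes the proof that the non-normal numbers form a set of Lebesgue measure $0$.
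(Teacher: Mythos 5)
Your proposal is correct and follows exactly the route the paper indicates: the paper gives no proof of Theorem \ref{Thm: Borel}, merely citing Borel and remarking that it ``is proved by applying Borel-Cantelli Lemma,'' and your fourth-moment/Borel--Cantelli argument is precisely that classical proof, fleshed out. One point worth noting: your argument establishes density $\tfrac{1}{b}$ only for single digits, which suffices here because the paper's Definition of normality requires only that each digit (not each finite block of digits, as in the more common definition) have density $\tfrac{1}{b}$; under the block definition you would need the additional standard step of treating overlapping blocks or passing to base $b^k$.
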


Borel's Theorem is proved by applying Borel-Cantelli Lemma. It is known that the strategy of the proof can be applied equivalently to the $p$--adic expansion of $p$--adic numbers. Therefore, almost all $p$--adic numbers are normal, i.e. every digit in $\mathbb{Z}/p\mathbb{Z}$ has density $\frac{1}{p}$. In particular, all irrational numbers, both in $\mathbb{R}$ and in $\mathbb{Q}_p$, are conjectured to be normal. However, apart from Borel's existence result, only few numbers have been proved to be normal. For example, there is not a proof that $\sqrt{2}$, $\pi$ or $e$ are normal, although it is strongly considered true.

\begin{Remark}
By Assumption \ref{Ass: equi}, it is implied also the uniform distribution of the coefficients of the partial quotients in both Browkin's and Ruban's continued fractions (see, e.g., the metric results in \cite{RUB}).
\end{Remark}

In the following, for sake of simplicity, we focus on \textit{Browkin I} algorithm. However, the same arguments can be adapted to obtain similar results also for \textit{Browkin II} and Algorithm \eqref{Alg: MR}.\smallskip

First of all, we start by computing the probability for a complete quotient of the \textit{Browkin I} continued fraction to have a given $p$--adic valuation.

\begin{Proposition}\label{Pro: probval}
Let $\alpha_0\in\mathbb{Q}_p$ be a quadratic irrational. Then
\begin{equation}\label{Eq: distribution}
\mathbb{P}(v_p(\alpha_n)=-k)=\frac{p-1}{p^k},
\end{equation}
for all $n,k\geq 1$.
\begin{proof}
The valuation of $\alpha_{n}$ is $-k$ if and only if
\[\alpha_{n-1}-a_{n-1}=c_kp^k+\ldots.\]
This is equivalent to ask that $c_1=c_2=\ldots=c_{k-1}=0$, that happens with probability
\[\mathbb{P}(c_1=c_2=\ldots=c_{k-1}=0)=\frac{1}{p^{k-1}},\]
and $c_k\neq 0$, that happens with probability
\[\mathbb{P}(c_k\neq 0)=\frac{p-1}{p},\]
and this proves the claim.
\end{proof}
\end{Proposition}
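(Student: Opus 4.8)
The plan is to translate the statement about $v_p(\alpha_n)$ into a statement about the $p$--adic digits of the previous complete quotient $\alpha_{n-1}$, and then apply Assumption \ref{Ass: equi}. The natural starting point is the recursion $\alpha_n=\frac{1}{\alpha_{n-1}-a_{n-1}}$ from \eqref{Alg: Br1}, which gives immediately
\[
v_p(\alpha_n)=-v_p(\alpha_{n-1}-a_{n-1}).
\]
Hence $v_p(\alpha_n)=-k$ is equivalent to $v_p(\alpha_{n-1}-a_{n-1})=k$, and the whole problem reduces to computing the distribution of the valuation of the ``fractional part'' $\alpha_{n-1}-a_{n-1}$.

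Next I would use the explicit form of the Browkin floor function. Since $a_{n-1}=s(\alpha_{n-1})$ is by definition \eqref{Eq: sfunc} the sum of all the terms of non-positive valuation in the $p$--adic expansion of $\alpha_{n-1}$, the difference $\alpha_{n-1}-a_{n-1}$ retains precisely the terms of positive valuation. Writing $\alpha_{n-1}-a_{n-1}=\sum_{i\geq 1}c_ip^i$ with digits $c_i$ in the symmetric set of representatives, its valuation equals $k$ exactly when $c_1=\cdots=c_{k-1}=0$ and $c_k\neq 0$. I would then invoke Assumption \ref{Ass: equi}: the digits $c_i$ are uniformly distributed and mutually independent, so that the event $c_1=\cdots=c_{k-1}=0$ has probability $p^{-(k-1)}$, while the event $c_k\neq 0$ has probability $\frac{p-1}{p}$; multiplying these two independent probabilities yields
\[
\mathbb{P}(v_p(\alpha_n)=-k)=\frac{1}{p^{k-1}}\cdot\frac{p-1}{p}=\frac{p-1}{p^k}.
\]

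The only genuinely delicate point is the justification that Assumption \ref{Ass: equi} may be applied to the digits of the complete quotient $\alpha_{n-1}$ rather than only to the original input $\alpha$, together with the independence of the relevant coefficients. This requires observing that $\alpha_{n-1}$ is again a quadratic irrational, since quadratic irrationality is preserved under the Möbius transformations $\alpha\mapsto \frac{1}{\alpha-s(\alpha)}$ defining the algorithm, so that the uniform-distribution hypothesis legitimately governs its expansion. This propagation is exactly what the Remark following Theorem \ref{Thm: Borel} records, namely that the uniform distribution of the $p$--adic coefficients carries over to the coefficients arising in the continued fraction process. Granting this, the positive-valuation digits $c_i$ of $\alpha_{n-1}-a_{n-1}$ may be treated as independent and uniform, and everything else is a direct reading-off of the valuation from the digit expansion, so I expect no further obstacle beyond making this independence precise.
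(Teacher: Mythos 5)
Your proposal is correct and follows essentially the same route as the paper: both reduce $v_p(\alpha_n)=-k$ to the condition that the first $k-1$ positive-valuation digits of $\alpha_{n-1}-a_{n-1}$ vanish while the $k$-th does not, and then multiply the probabilities $p^{-(k-1)}$ and $\frac{p-1}{p}$ under Assumption \ref{Ass: equi}. Your additional care in justifying that the uniform-distribution hypothesis propagates to the complete quotients is exactly the point the paper delegates to the Remark following Theorem \ref{Thm: Borel}, so there is no substantive difference.
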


\begin{Remark}
Notice that, in fact
\begin{align*}
\sum\limits_{k=1}^{+\infty}\frac{p-1}{p^k}&=(p-1)\left(\sum\limits_{k=1}^{+\infty}\frac{1}{p^k}\right)=(p-1)\left(\sum\limits_{k=0}^{+\infty}\frac{1}{p^k}\right)-(p-1)=\\
&=(p-1)\left(\frac{1}{1-\frac{1}{p}}\right)-(p-1)=p-(p-1)=1,
\end{align*}
so that \eqref{Eq: distribution} actually defines a distribution on the natural numbers.
\end{Remark}

Then, in the next proposition we compute the expected value of the valuation of a \textit{Browkin I} complete quotient.

\begin{Proposition}\label{Prop: MeanVal}
Let $\alpha_0\in\mathbb{Q}_p$ be a quadratic irrational. Then,
\[\mathbb{E}(v_p(\alpha_n))=\frac{p}{p-1},\]
for all $n\geq 1$.
\begin{proof}
Let us notice that
\[\mathbb{E}(v_p(\alpha_n))=\sum\limits_{k=1}^{+\infty}k\frac{p-1}{p^k},\]
is a convergent series since
\[\lim\limits_{k\rightarrow +\infty }\frac{(k+1)\frac{p-1}{p^{k+1}}}{k\frac{p-1}{p^{k}}}=\lim\limits_{k\rightarrow +\infty}\frac{k+1}{kp}=\frac{1}{p}<1.\]
Then we compute
\begin{align*}
\mathbb{E}(v_p(\alpha_n))&=\sum\limits_{k=1}^{+\infty}k\frac{p-1}{p^k}=(p-1)\sum\limits_{k=1}^{+\infty}\frac{k}{p^k}+(p-1)\sum\limits_{k=1}^{+\infty}\frac{1}{p^k}-(p-1)\sum\limits_{k=1}^{+\infty}\frac{1}{p^k}=\\
&=(p-1)\sum\limits_{k=1}^{+\infty}\frac{k-1}{p^k}+(p-1)\sum\limits_{k=1}^{+\infty}\frac{1}{p^k}=\\
&=(p-1)\sum\limits_{k=0}^{+\infty}\frac{k}{p^{k+1}}+(p-1)\left(\frac{1}{1-\frac{1}{p}}-1\right)=\\
&=\frac{1}{p}\sum\limits_{k=0}^{+\infty}k\frac{p-1}{p^k}+1=\frac{1}{p}\sum\limits_{k=1}^{+\infty}k\frac{p-1}{p^k}+1.
\end{align*}
It means that, for $\mathbb{E}=\mathbb{E}(v_p(\alpha_n))$,
\begin{equation*}
\mathbb{E}=\frac{1}{p}\mathbb{E}+1,
\end{equation*}
that is,
\[\mathbb{E}=\mathbb{E}(v_p(\alpha_n))=\frac{p}{p-1},\]
and the claim is proved.
\end{proof}
\end{Proposition}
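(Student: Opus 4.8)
The plan is to compute the expectation directly from the distribution established in Proposition \ref{Pro: probval}. By that result, $v_p(\alpha_n)$ is negative and equals $-k$ with probability $\frac{p-1}{p^k}$ for each $k\geq 1$, so the quantity appearing in the statement is the series
\[
\mathbb{E}(v_p(\alpha_n))=\sum_{k=1}^{+\infty}k\,\frac{p-1}{p^k}=(p-1)\sum_{k=1}^{+\infty}\frac{k}{p^k}.
\]
The first thing I would do is confirm that this series converges, for instance by the ratio test: consecutive terms have ratio $\frac{k+1}{kp}\to\frac{1}{p}<1$. This guarantees that the expectation is finite and, crucially, that the series converges absolutely, which legitimizes every rearrangement used below.

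Then the core step is to evaluate the arithmetic-geometric series $\sum_{k\geq 1}k/p^k$. I would do this via the standard closed form $\sum_{k=1}^{+\infty}kx^k=\frac{x}{(1-x)^2}$, valid for $|x|<1$, specialized to $x=\frac{1}{p}$; this yields $\frac{p}{(p-1)^2}$, and multiplying by $(p-1)$ gives $\frac{p}{p-1}$, as claimed. Alternatively, one can avoid quoting that closed form and instead exploit a self-referential recursion: reindexing by writing $k=(k-1)+1$ and shifting the summation index splits the series into a scaled copy of itself plus the geometric tail $\sum_{k\geq 1}\frac{p-1}{p^k}=1$ (the latter being exactly the normalization of the distribution recorded in the Remark after Proposition \ref{Pro: probval}). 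One then obtains that $\mathbb{E}:=\mathbb{E}(v_p(\alpha_n))$ satisfies the linear relation $\mathbb{E}=\frac{1}{p}\mathbb{E}+1$, whence $\mathbb{E}=\frac{p}{p-1}$ immediately.

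There is no substantial obstacle here, as the computation is elementary once the distribution of Proposition \ref{Pro: probval} is in hand; the convergence check in the first paragraph removes any concern about manipulating the series formally. The only point demanding mild care is the index bookkeeping in the recursion approach, where one must correctly identify the scaled copy of the full sum and separate off the geometric remainder. I would favour the recursion route precisely because it reuses the normalization identity already established, keeping the argument self-contained and short.
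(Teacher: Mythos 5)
Your proposal is correct and, in the recursion route you say you favour, it is essentially identical to the paper's own proof: same reduction to $\sum_{k\ge 1}k\frac{p-1}{p^k}$ via Proposition \ref{Pro: probval}, same ratio-test convergence check, and the same self-referential identity $\mathbb{E}=\frac{1}{p}\mathbb{E}+1$ obtained by reindexing and using the normalization $\sum_{k\ge1}\frac{p-1}{p^k}=1$. The closed-form alternative you mention is a valid shortcut but not what the paper does.
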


\begin{Remark}
Proposition \ref{Prop: MeanVal} tells us that, especially for large prime $p$, having large negative valuation is extremely rare. This is what we expect, since large valuations are caused by several consecutive $0's$, and under Assumption \ref{Ass: equi} each zero coefficient appears with probability $\frac{1}{p}$.
\end{Remark}

Now we want to compute the expected size of \textit{Browkin I} partial quotients inside $\mathbb{R}$, in order to predict the behaviour of the Euclidean absolute value of the convergents. Notice that the expected value of any partial quotient $a$ in \textit{Browkin I} is $\mathbb{E}(a)=0$. This is due to the choice of the symmetric interval $\{-\frac{p-1}{2},\ldots,\frac{p-1}{2}\}$. Therefore, to have an estimate of the size of the partial quotients, we compute the expected value of its Euclidean absolute value.

\begin{Theorem}\label{Thm: expvalue}
Let $a$ be a \textit{Browkin I} partial quotient and let $k\geq 1$. Then, the expected value of its Euclidean absolute value  is
\[\mathbb{E}(|a|\ |\ v_p(a)=-k)=\frac{p^{2(k+1)}-1}{4p^{2k+1}},\]
and, in general,
\[\mathbb{E}(|a|)=\frac{p}{4}\left(1-\frac{1}{p^2(p^2+p+1)}\right).\]
\begin{proof}
The expected value of the constant term $c_0$ is $0$, but in absolute value it is
\begin{align*}
\mathbb{E}(|c_0|)&=\frac{1}{p}(0+2+\ldots+p-1)=\frac{2}{p}\left(1+\ldots+\frac{p-1}{2}\right)=\\
&=\frac{2}{p}\left(\frac{p-1}{2}\right)\left(\frac{p+1}{2}\right)\frac{1}{2}=\frac{p^2-1}{4p}.
\end{align*}
The expected value of $|c_0+c_1\frac{1}{p}|$ is composed by the case where $c_0=0$, i.e.
\begin{align*}
\frac{2}{p}\left(1+\ldots+\frac{p-1}{2}\right),
\end{align*}
and the case where $c_0\neq 0$. When $c_0\neq 0$, the positive elements $c_0+c_1\frac{1}{p}$ are exactly the ones with positive $c_0\in\left\{0,\ldots,\frac{p-1}{2}\right\}$. For $c_0\neq 0$, all the elements $c_0+c_1p$ and $c_0-c_1p$ cancel out, and all of them are counted twice (the positive and the negative one). Therefore, for all choices of $c_0\neq 0$ we have a summand of the kind $2pc_0$, since the tails with $c_1$ eliminate and there are exactly $p$ summands. Now we can compute
\begin{align*}
\mathbb{E}\left(\left|c_0+\frac{c_1}{p}\right|\right)&=\frac{1}{p^2}\left(\frac{2}{p}\left(1+\ldots+\frac{p-1}{2}\right)+2p\left(1+\ldots+\frac{p-1}{2}\right)\right)=\\
&=\frac{1}{p^2}\cdot\frac{p-1}{2}\cdot\frac{p+1}{2}\cdot\frac{1}{2}\left(\frac{2}{p}+2p\right)=\\
&=\frac{p^2-1}{8p^2}\cdot\frac{2+2p^2}{p}=\frac{p^4-1}{4p^3}.
\end{align*}
In the general case, with a similar reasoning we obtain
\begin{align*}
\mathbb{E}\left(\left|c_0+\frac{c_1}{p}+\ldots+\frac{c_k}{p^k}\right|\right)&=\frac{1}{p^2}\mathbb{E}\left(\left|c_1+\ldots+\frac{c_k}{p^{k-1}}\right|\right)+\frac{2p^k}{p^{k+1}}\left(1+\ldots+\frac{p-1}{2}\right)=\\
&=\frac{1}{p^2}\mathbb{E}\left(\left|c_1+\ldots+\frac{c_k}{p^{k-1}}\right|\right)+\frac{2p^k(p^2-1)}{8p^{k+1}}=\\
&=\frac{1}{p^2}\mathbb{E}\left(\left|c_1+\ldots+\frac{c_k}{p^{k-1}}\right|\right)+\frac{p^2-1}{4p}.
\end{align*}
The reason why $\frac{1}{p^2}$ appears in the first summand, is that, when $c_0=0$, we can collect $\frac{1}{p}$ from the sum and, moreover, the expected value on $k+1$ terms is divided by $p$ one time more than the expected value on $k$ terms. For the second summand, as before we have two times every value, every $k$-tuple cancel and they are exactly $p^k$, so we multiply that for every value of $c_0$ in $\{0,\ldots,\frac{p-1}{2}\}$. Now, if we call
\[\mathbb{E}_k=\mathbb{E}\left(\left|c_0+\frac{c_1}{p}+\ldots+\frac{c_k}{p^k}\right|\right),\]
we have:
\begin{align*}
\mathbb{E}_k&=\frac{p^2-1}{4p}+\frac{1}{p^2}\mathbb{E}_{k-1}=\frac{p^2-1}{4p}+\frac{p^2-1}{4p}\cdot\frac{1}{p^2}+\frac{1}{p^4}\mathbb{E}_{k-2}=\\
&=\frac{p^2-1}{4p}\left( 1+\frac{1}{p^2}+\frac{1}{p^4}+\ldots+\frac{1}{p^{2(k-1)}}\right)+\frac{1}{p^{2k}}\mathbb{E}_0=\\
&=\frac{p^2-1}{4p}\left( 1+\frac{1}{p^2}+\frac{1}{p^4}+\ldots+\frac{1}{p^{2(k-1)}}+\frac{1}{p^{2k}}\right)=\\
&=\frac{p^2-1}{4p}\left(\frac{p^{2(k+1)}-1}{p^{2k}(p^2-1)}\right)=\frac{p^{2(k+1)}-1}{4p^{2k+1}}.
\end{align*}

Then, we can compute
\begin{align*}
\mathbb{E}(|a|)&=\sum\limits_{k=1}^{+\infty}\mathbb{E}(|a| \ | v_p(a)=-k)\mathbb{P}(v_p(a)=-k)=\sum\limits_{k=1}^{+\infty}\frac{p^{2(k+1)}-1}{4p^{2k+1}}\cdot\frac{p-1}{p^k}=\\
&=\frac{p-1}{4}\sum\limits_{k=1}^{+\infty}\frac{p^{2k+2}-1}{p^{3k+1}}=\frac{p-1}{4}\left(\sum\limits_{k=1}^{+\infty}\frac{1}{p^{k-1}}-\sum\limits_{k=1}^{+\infty}\frac{1}{p^{3k+1}}\right)=\\
&=\frac{p-1}{4}\left(\sum\limits_{k=0}^{+\infty}\frac{1}{p^{k}}-\frac{1}{p^4}\sum\limits_{k=0}^{+\infty}\frac{1}{p^{3k}}\right)=\frac{p-1}{4}\left(\frac{1}{1-\frac{1}{p}}-\frac{1}{p^4}\cdot\frac{1}{1-\frac{1}{p^3}}\right)=\\
&=\frac{p-1}{4}\left(\frac{p}{p-1}-\frac{1}{p}\cdot\frac{1}{p^3-1}\right)=\frac{1}{4}\left(p-\frac{1}{p}\cdot\frac{1}{p^2+p+1}\right)=\\
&=\frac{p}{4}\left(1-\frac{1}{p^2(p^2+p+1)}\right),
\end{align*}
and this concludes the proof.
\end{proof}
\end{Theorem}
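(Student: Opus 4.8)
The plan is to reduce both assertions to a single linear recursion on the quantity
\[\mathbb{E}_k := \mathbb{E}\left(\left|c_0 + \frac{c_1}{p} + \cdots + \frac{c_k}{p^k}\right|\right),\]
where, under Assumption \ref{Ass: equi}, the coefficients $c_0,\ldots,c_k$ are independent and uniform on the symmetric residue set $\{-\frac{p-1}{2},\ldots,\frac{p-1}{2}\}$. A \textit{Browkin I} partial quotient $a$ with $v_p(a)=-k$ is a number of exactly this shape, so the first conditional claim is the identification $\mathbb{E}(|a|\mid v_p(a)=-k)=\mathbb{E}_k$. First I would settle the base case $k=0$ by a direct count: since $|c_0|$ takes the value $0$ once and each value $m\in\{1,\ldots,\frac{p-1}{2}\}$ twice among the $p$ equally likely residues, one gets $\mathbb{E}_0=\frac{2}{p}\sum_{m=1}^{(p-1)/2}m=\frac{p^2-1}{4p}$.

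The heart of the argument is a recursion expressing $\mathbb{E}_k$ through $\mathbb{E}_{k-1}$, obtained by conditioning on the constant term $c_0$. When $c_0=0$, the quantity factors as $\frac{1}{p}\bigl|c_1+\frac{c_2}{p}+\cdots+\frac{c_k}{p^{k-1}}\bigr|$, which (one power of $p$ from the extracted factor, one from $\mathbb{P}(c_0=0)$) contributes $\frac{1}{p^2}\mathbb{E}_{k-1}$. The crucial case is $c_0\neq 0$: here the fractional tail satisfies
\[\left|\frac{c_1}{p}+\cdots+\frac{c_k}{p^k}\right|\leq \frac{p-1}{2}\sum_{j=1}^{k}\frac{1}{p^j}<\frac{1}{2}\leq |c_0|,\]
so the sign of the whole sum is forced to equal the sign of $c_0$. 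Hence, for each fixed $c_0\neq 0$, summing over all $p^k$ tails gives $\sum_{\text{tails}}|c_0+\text{tail}|=\bigl|\,p^k c_0+\sum_{\text{tails}}\text{tail}\,\bigr|$, and the odd symmetry of the uniform distribution kills the tail sum, leaving $p^k|c_0|$. Averaging over $c_0\neq 0$ and normalising by the $p^{k+1}$ equally likely tuples reproduces $\frac{p^2-1}{4p}$, so that
\[\mathbb{E}_k=\frac{1}{p^2}\mathbb{E}_{k-1}+\frac{p^2-1}{4p}.\]

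Solving this recursion is routine: unfolding it turns $\mathbb{E}_k$ into the geometric sum $\frac{p^2-1}{4p}\sum_{j=0}^{k}p^{-2j}$, which collapses to the claimed closed form $\frac{p^{2(k+1)}-1}{4p^{2k+1}}$. For the unconditional expectation I would then weight by the valuation distribution of Proposition \ref{Pro: probval}, $\mathbb{P}(v_p(a)=-k)=\frac{p-1}{p^k}$, and evaluate
\[\mathbb{E}(|a|)=\sum_{k=1}^{\infty}\frac{p^{2(k+1)}-1}{4p^{2k+1}}\cdot\frac{p-1}{p^k},\]
splitting the summand into two geometric series and summing to reach $\frac{p}{4}\left(1-\frac{1}{p^2+p+1}\right)$.

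I expect the recursion step to be the only genuine obstacle, and within it the single delicate ingredient is the sign-stability estimate: everything downstream (the geometric collapse and the final series) is mechanical. The point to get exactly right is that $|c_0|\geq 1$ forces the sign while the whole fractional tail is bounded by $\tfrac12$, which is what allows the symmetric cancellation. I would also be careful with the bookkeeping in the last two steps, since the final constant is sensitive to the normalising powers of $p$ and to the index shifts in the geometric series, even though the dominant term $p/4$ is robust to such slips.
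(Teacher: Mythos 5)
Your plan reproduces the paper's proof almost step for step: the same reduction to $\mathbb{E}_k=\mathbb{E}\bigl(\bigl|c_0+\frac{c_1}{p}+\cdots+\frac{c_k}{p^k}\bigr|\bigr)$, the same recursion $\mathbb{E}_k=\frac{1}{p^2}\mathbb{E}_{k-1}+\frac{p^2-1}{4p}$ obtained by conditioning on $c_0$, the same geometric collapse, and the same weighting by $\mathbb{P}(v_p(a)=-k)=\frac{p-1}{p^k}$. Your justification of the key cancellation is in fact cleaner than the paper's: the explicit bound $\bigl|\frac{c_1}{p}+\cdots+\frac{c_k}{p^k}\bigr|<\frac12\leq|c_0|$ forcing the sign, followed by the odd symmetry of the tail, is exactly the right way to make rigorous what the paper only gestures at with ``the elements cancel out and are counted twice.'' One shared blemish you inherit silently: conditioning on $v_p(a)=-k$ forces the top coefficient $c_k\neq 0$, whereas $\mathbb{E}_k$ averages over all $(k+1)$-tuples including $c_k=0$; both you and the paper identify these without comment.

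The genuine problem is at the very end, and your own closing caveat about index shifts is precisely where it bites. Carrying out the split into two geometric series correctly from the first display gives
\[
\sum_{k=1}^{\infty}\frac{p^{2(k+1)}-1}{4p^{2k+1}}\cdot\frac{p-1}{p^k}
=\frac{p-1}{4}\left(\frac{p}{p-1}-\frac{1}{p(p^3-1)}\right)
=\frac{p}{4}-\frac{1}{4p(p^2+p+1)},
\]
because $\sum_{k\geq 1}p^{-3k-1}=\frac{1}{p(p^3-1)}$, not $\frac{p}{p^3-1}$ as the paper's reindexing asserts. This differs from the stated closed form $\frac{p}{4}\bigl(1-\frac{1}{p^2+p+1}\bigr)$ by $\frac{p^2-1}{4p(p^2+p+1)}$ (check $p=3$: the series sums to $\frac{29}{39}$, while the stated formula gives $\frac{9}{13}=\frac{27}{39}$). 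So the two displayed formulas in the theorem are mutually inconsistent given Proposition \ref{Pro: probval}, and your plan of ``summing to reach'' the stated value cannot be executed as written; done carefully, it would instead expose the discrepancy. Since the leading term $\frac{p}{4}$ is unaffected, none of the qualitative conclusions drawn from the theorem change, but you should either correct the final constant or flag that it does not follow from your (and the paper's) intermediate results.
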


\begin{Remark}
Let us notice that we always expect $|a|>1$ for all primes $p$ except $p=3$. Moreover, the expected size of every partial quotients grows linearly with $p$:
\begin{align*}
p=13 \ \ \ \ &\Longrightarrow \ \ \ \ \mathbb{E}(|a|)\approx 3.25,\\
p=43 \ \ \ \ &\Longrightarrow \ \ \ \ \mathbb{E}(|a|)\approx 10.75,\\
p=211 \ \ \ \ &\Longrightarrow \ \ \ \ \mathbb{E}(|a|)\approx 52.75,\\
p=839 \ \ \ \ &\Longrightarrow \ \ \ \ \mathbb{E}(|a|)\approx 209.75.
\end{align*}
\end{Remark}

\begin{Example}\label{Exa: deviation}
For $p=7823$, the convergents of the expansion of $\sqrt{15648}$ with \textit{Browkin I} are:
\begin{align*}
\frac{A_{10}}{B_{10}}&= 3339.99956244164\ldots,\\
\frac{A_{100}}{B_{100}}&=3339.99956244164\ldots,\\
\frac{A_{1000}}{B_{1000}}&=3339.99956244164\ldots,\\
\frac{A_{5000}}{B_{5000}}&=3339.99956244164\ldots,\\
\frac{A_{10000}}{B_{10000}}&=3339.99956244164\ldots.\\
\end{align*}
Indeed, the continued fraction seems to rapidly converge to a real limit, which is different from $\sqrt{15648}\approx 125.09$. In this case, it is really unlikely for the convergents to deviate from that value, due to Equation \eqref{Eq: diffconvergents}. In fact, we expect $|B_n|$ to grow exponentially as
\[|B_n|\sim |a_1|\ldots|a_n|,\]
where the expected value of each partial quotient is, by Theorem \ref{Thm: expvalue},
\[\mathbb{E}(|a|)\approx 1955.75.\]
\end{Example}

Finally, based on these results and the computational analysis performed in the next section we leave Conjecture \ref{Conj: realconv}. In practice, by Conjecture \ref{Conj: realconv}, even if cancellation can happen and the convergents can be arbitrarily oscillating, this is really unlikely, as the sequence $|B_n|$ tends to have an exponential growth.

\section{Computations}\label{Sec: computations}
In the following tables, we list the approximations of $\sqrt{D}$, having both a $p$-adic and a real image, provided by the $1000$th convergent of \textit{Browkin I}, \textit{Browkin II} and Algorithm \eqref{Alg: MR} $p$--adic continued fractions in $\mathbb{Q}_5$ and $\mathbb{Q}_{43}$, distinguishing the periodic and the apparently non-periodic cases. It is possible to notice that, when the $p$--adic continued fraction of $\sqrt{D}$ is periodic, the convergents approach very well and very soon the real limit, that is $\sqrt{D}$ by Proposition \ref{Pro: Conv}. In the cases where periodicity is not detected, the continued fraction seems to approach a real limit different from $\sqrt{D}$. This is a behaviour that we have observed in all the examples of our analysis. The SageMath code used for the computational part is publicly available\footnote{\href{https://github.com/giulianoromeont/p-adic-continued-fractions}{https://github.com/giulianoromeont/p-adic-continued-fractions}} and contains the implementations of the main algorithms for $p$--adic continued fractions.

\begin{table}[H]
 \begin{minipage}{0.5\linewidth}
  \caption{Non-periodic \textit{Browkin I} expansions within $1000$ steps in $\mathbb{Q}_5$.}
  \bigskip
  \centering
  \begin{tabular}{c|c}
   D & $(A_{1000}/B_{1000})^2$\\
   \hline
   19 & \ 1.84226    \\ \hline 
26 & \ 0.42758    \\ \hline 
29 & \ 1.56850    \\ \hline 
31 & \ 3.85808    \\ \hline 
39 & \ 1.46700    \\ \hline 
41 & \ 3.17409    \\ \hline 
44 & \ 5.17107    \\ \hline 
46 & \ 0.23921    \\ \hline 
51 & \ 0.04092    \\ \hline 
56 & \ 0.00378   \\ \hline 
59 & \ 6.02309    \\ \hline 
61 & \ 1.76172    \\ \hline 
66 & \ 6.74804    \\ \hline 
71 & \ 1.83277    \\ \hline 
79 & \ 6.64195    \\ \hline 
84 & \ 2.60241    \\ \hline 
86 & \ 10.4167    \\ \hline 
89 & \ 15.3162    \\ \hline 
91 & \ 5.00953    \\ \hline 
96 & \ 0.16662    \\ \hline 
101 & \ 1.78169    \\ \hline 
106 & \ 2.94191    \\ \hline 
109 & \ 7.64688    \\ \hline 
114 & \ 14.9227    \\ \hline 
116 & \ 0.10316    \\ \hline 
124 & \ 0.87815    \\ \hline 
126 & \ 1.38815    \\ \hline 
129 & \ 1.67262    \\ \hline
131 & \ 0.55175 \\ \hline 
134 & \ 7.69174 \\ \hline 
136 & \ 0.63808 \\ \hline 
139 & \ 1.46264 \\ \hline 
141 & \ 6.41153 \\ \hline 
146 & \ 2.86117 \\ \hline 
149 & \ 2.85846 \\ \hline 
151 & \ 0.00054 
  \end{tabular}
 \end{minipage}%
 \begin{minipage}{0.5\linewidth}
  \caption{Periodic \textit{Browkin I} expansions within $1000$ steps in $\mathbb{Q}_5$.}
  \bigskip
  \centering
  \begin{tabular}{c|c}
   D & $(A_{1000}/B_{1000})^2$ \\
   \hline
   6 & \ 6.00005\ \\ \hline 
11 & \ 11.00049  \\ \hline 
14 & \ 14.00031  \\ \hline 
21 & \ 20.99472 \\ \hline 
24 & \ 24.00020 \\ \hline 
34 & \ 34.00056 \\ \hline 
54 & \ 54.00045  \\ \hline 
69 & \ 68.99960  \\ \hline 
74 & \ 73.99268  \\ \hline 
76 & \ 75.99829  \\ \hline 
94 & \ 93.99884  \\ \hline 
99 & \ 99.00051  \\ \hline 
104 & \ 104.00124 \\ \hline 
111 & \ 110.99886 \\ \hline 
119 & \ 119.00191 \\  
  \end{tabular}
 \end{minipage}%
\end{table}

\newpage

\begin{table}[H]
 \begin{minipage}{0.5\linewidth}
  \caption{Non-periodic \textit{Browkin II} expansions within $1000$ steps in $\mathbb{Q}_5$.}
  \bigskip
  \centering
  \begin{tabular}{c|c}
   D & $(A_{1000}/B_{1000})^2$\\
   \hline
   19 & \ 3.58875  \\ \hline 
39 & \ 11.61105  \\ \hline 
41 & \ 0.24482  \\ \hline 
44 & \ 1.84987  \\ \hline 
46 & \ 0.18939  \\ \hline 
59 & \ 0.08105  \\ \hline 
66 & \ 3.49091  \\ \hline 
71 & \ 0.00555  \\ \hline 
74 & \ 5.30380  \\ \hline 
76 & \ 2.88456  \\ \hline 
86 & \ 0.13089  \\ \hline 
89 & \ 29.0251  \\ \hline 
94 & \ 13.4146  \\ \hline 
96 & \ 7.01349  \\ \hline 
99 & \ 0.03798  \\ \hline 
101 & \ 2.46992  \\ \hline 
106 & \ 0.00040 \\ \hline 
119 & \ 4.90356  \\ \hline 
124 & \ 18.46678  \\ \hline 
131 & \ 3.90023  \\ \hline 
134 & \ 7.01402  \\ \hline 
141 & \ 3.03630  \\ \hline 
146 & \ 0.00366  \\ \hline 
151 & \ 5.06295  \\ \hline 
154 & \ 18.45303  \\ \hline 
159 & \ 14.69725  \\ \hline
164 & \ 0.91049  \\ \hline 
166 & \ 0.42797  \\ \hline 
174 & \ 6.30210  \\ \hline 
179 & \ 0.06295 \\ \hline 
181 & \ 0.52794  \\ \hline 
184 & \ 2.80529  \\ \hline 
186 & \ 14.3330   \\ \hline 
189 & \ 2.5363  \\ \hline 
191 & \ 26.1437   \\ \hline 
194 & \ 2.6614  \\ \hline 
199 & \ 17.37722   
  \end{tabular}
 \end{minipage}%
 \begin{minipage}{0.5\linewidth}
  \caption{Periodic \textit{Browkin II} expansions within $1000$ steps in $\mathbb{Q}_5$.}
  \bigskip
  \centering
  \begin{tabular}{c|c}
   D & $(A_{1000}/B_{1000})^2$ \\
   \hline
   6 & \ 5.99956  \\ \hline 
11 & \ 10.99983  \\ \hline 
14 & \ 13.99957  \\ \hline 
21 & \ 21.92018  \\ \hline 
24 & \ 24.00020  \\ \hline 
26 & \ 25.99980  \\ \hline 
29 & \ 28.99930  \\ \hline 
31 & \ 30.99928  \\ \hline 
34 & \ 34.00056  \\ \hline 
51 & \ 50.99959  \\ \hline 
54 & \ 53.99898  \\ \hline 
56 & \ 55.99977  \\ \hline 
61 & \ 61.00078  \\ \hline 
69 & \ 38.92137  \\ \hline 
79 & \ 78.99832  \\ \hline 
84 & \ 84.75043  \\ \hline 
91 & \ 91.00015  \\ \hline 
104 & \ 104.0012  \\ \hline 
109 & \ 72.80696  \\ \hline 
111 & \ 110.9525  \\ \hline 
114 & \ 113.9983  \\ \hline 
116 & \ 115.9993  \\ \hline 
126 & \ 125.9983  \\ \hline 
129 & \ 128.9996  \\ \hline 
136 & \ 136.0022  \\ \hline 
139 & \ 138.9993  \\ \hline 
149 & \ 148.9888  \\ \hline 
156 & \ 155.9976  \\ \hline
161 & \ 160.8153 \\ \hline 
171 & \ 170.9974 \\ \hline 
176 & \ 175.9973 \\  
  \end{tabular}
 \end{minipage}%
\end{table}

\newpage

\begin{table}[H]
 \begin{minipage}{0.5\linewidth}
  \caption{Non-periodic Algorithm \eqref{Alg: MR} expansions within $1000$ steps in $\mathbb{Q}_5$.}
  \bigskip
  \centering
  \begin{tabular}{c|c}
   D & $(A_{1000}/B_{1000})^2$\\
   \hline
39 & \ 10.4898    \\ \hline 
41 & \ 0.17799    \\ \hline 
46 & \ 0.12453    \\ \hline 
66 & \ 0.07997    \\ \hline 
69 & \ 2.10163    \\ \hline 
71 & \ 0.00543    \\ \hline 
79 & \ 1.61061    \\ \hline 
86 & \ 0.13053   \\ \hline 
89 & \ 13.0321    \\ \hline 
101 & \ 2.4583   \\ \hline 
106 & \ 4968.642 \\ \hline 
109 & \ 85110.827 \\ \hline 
114 & \ 26.4699   \\ \hline 
124 & \ 18.35951   \\ \hline 
131 & \ 3.6963   \\ \hline 
136 & \ 8.6183   \\ \hline 
141 & \ 4.7646   \\ \hline 
146 & \ 1.04080   \\ \hline 
149 & \ 0.02709   \\ \hline 
151 & \ 2.85170    \\ \hline 
154 & \ 19.7118    \\ \hline 
159 & \ 16.6455    \\ \hline 
161 & \ 10.9441    \\ \hline 
164 & \ 0.7972    \\ \hline 
171 & \ 0.0039    \\ \hline 
174 & \ 19.1686    \\ \hline 
179 & \ 0.1908    \\ \hline 
181 & \ 31449.44 \\ \hline 
184 & \ 2.80562    \\ \hline 
186 & \ 22.7023    \\ \hline 
189 & \ 15.1905    \\ \hline 
194 & \ 2.6604    \\ \hline 
199 & \ 15.142    \\ 
  \end{tabular}
 \end{minipage}%
 \begin{minipage}{0.5\linewidth}
  \caption{Periodic Algorithm \eqref{Alg: MR} expansions within $1000$ steps in $\mathbb{Q}_5$.}
  \bigskip
  \centering
  \begin{tabular}{c|c}
   D & $(A_{1000}/B_{1000})^2$ \\
   \hline
   6 & \ 5.9995  \\ \hline 
11 & \ 10.9998  \\ \hline 
14 & \ 13.9995  \\ \hline 
19 & \ 19.000  \\ \hline 
21 & \ 21.9201  \\ \hline 
24 & \ 24.0002  \\ \hline 
26 & \ 25.999  \\ \hline 
29 & \ 28.9993  \\ \hline 
31 & \ 30.9992  \\ \hline 
34 & \ 34.0005  \\ \hline 
44 & \ 43.9900  \\ \hline 
51 & \ 50.9995  \\ \hline 
54 & \ 54.0004  \\ \hline 
56 & \ 55.9997  \\ \hline 
59 & \ 59.0008  \\ \hline 
61 & \ 61.0007  \\ \hline 
74 & \ 73.9995  \\ \hline 
76 & \ 75.9982  \\ \hline 
84 & \ 84.7504  \\ \hline 
91 & \ 90.9982  \\ \hline 
94 & \ 93.9988  \\ \hline 
96 & \ 95.9988  \\ \hline 
99 & \ 99.0005  \\ \hline 
104 & \ 104.001  \\ \hline 
111 & \ 110.996  \\ \hline 
116 & \ 115.999  \\ \hline 
119 & \ 117.887  \\ \hline 
126 & \ 125.998 \\ \hline 
129 & \ 129.001  \\ \hline 
134 & \ 133.999  \\ \hline 
139 & \ 138.999  \\ \hline 
156 & \ 155.997  \\ \hline 
166 & \ 142.515  \\ \hline 
176 & \ 174.195  \\ \hline 
191 & \ 190.997  \\
  \end{tabular}
 \end{minipage}%
\end{table}

\begin{table}[H]
 \begin{minipage}{0.5\linewidth}
  \caption{Non-periodic \textit{Browkin I} expansions within $1000$ steps in $\mathbb{Q}_{43}$.}
  \bigskip
  \centering
  \begin{tabular}{c|c}
   D & $(A_{1000}/B_{1000})^2$\\
   \hline
  6 & \ 44.7453  \\ \hline 
10 & \ 222.0129  \\ \hline 
11 & \ 436.6093  \\ \hline 
13 & \ 402.1308  \\ \hline 
14 & \ 98.7578  \\ \hline 
15 & \ 146.7199  \\ \hline 
17 & \ 367.3585  \\ \hline 
21 & \ 62.1133 \\ \hline 
23 & \ 326.9225  \\ \hline 
24 & \ 197.4165  \\ \hline 
31 & \ 296.631  \\ \hline 
38 & \ 82.5499  \\ \hline 
40 & \ 166.265  \\ \hline 
41 & \ 254.491  \\ \hline 
44 & \ 0.9057  \\ \hline 
47 & \ 4.3664  \\ \hline 
52 & \ 8.1533  \\ \hline 
53 & \ 227.689  \\ \hline 
54 & \ 438.270  \\ \hline 
56 & \ 403.029  \\ \hline 
57 & \ 98.7161  \\ \hline 
58 & \ 146.785  \\ \hline 
59 & \ 15.5031  \\ \hline 
60 & \ 356.042  \\ \hline 
66 & \ 330.428  \\ \hline 
67 & \ 192.820  \\ \hline 
68 & \ 26.185  \\ \hline 
74 & \ 286.845  \\ \hline 
79 & \ 36.648  \\ \hline 
83 & \ 174.448  \\ \hline 
84 & \ 247.882  \\ \hline 
87 & \ 0.907  \\ \hline 
90 & \ 4.373  \\ \hline 
92 & \ 43.991  \\ \hline 
95 & \ 8.1618  \\ \hline 
96 & \ 222.173  \\ \hline 
97 & \ 445.349  \\ \hline 
99 & \ 424.615  \\ 
  \end{tabular}
 \end{minipage}%
 \begin{minipage}{0.5\linewidth}
  \caption{Periodic \textit{Browkin I} expansions within $1000$ steps in $\mathbb{Q}_{43}$.}
  \bigskip
  \centering
  \begin{tabular}{c|c}
   D & $(A_{1000}/B_{1000})^2$ \\
   \hline
35 & \ 35.000  \\ \hline 
78 & \ 78.000 \\ \hline 
187 & \ 186.997  \\ 
  \end{tabular}
 \end{minipage}%
\end{table}

\newpage

\begin{table}[H]
 \begin{minipage}{0.5\linewidth}
  \caption{Non-periodic \textit{Browkin II} expansions within $1000$ steps in $\mathbb{Q}_{43}$.}
  \bigskip
  \centering
  \begin{tabular}{c|c}
   D & $(A_{1000}/B_{1000})^2$\\
   \hline
 10 & \ 115.362  \\ \hline 
11 & \ 672.676  \\ \hline 
13 & \ 331.167  \\ \hline 
15 & \ 57.450  \\ \hline 
23 & \ 1301.16 \\ \hline 
31 & \ 220.368  \\ \hline 
35 & \ 60.478 \\ \hline 
40 & \ 240.74\\ \hline 
53 & \ 300.467 \\ \hline 
54 & \ 358.488  \\ \hline 
56 & \ 569.003  \\ \hline 
58 & \ 84.0687  \\ \hline 
60 & \ 464.238  \\ \hline 
66 & \ 188.872  \\ \hline 
67 & \ 411.809  \\ \hline 
74 & \ 352.617  \\ \hline 
83 & \ 106.822  \\ \hline 
96 & \ 135.124  \\ \hline 
97 & \ 299.172  \\ \hline 
99 & \ 318.033 \\ \hline 
101 & \ 210.934  \\ \hline 
103 & \ 676.431  \\ \hline 
109 & \ 445.940  \\ \hline 
110 & \ 133.830  \\ \hline
117 & \ 389.825 \\ \hline 
126 & \ 224.244 \\ \hline 
127 & \ 325.751  \\ \hline 
130 & \ 9.8125  \\ \hline 
133 & \ 4.7093  \\ \hline 
139 & \ 162.889  \\ \hline 
142 & \ 492.067 \\ \hline 
145 & \ 34.729  \\ \hline 
146 & \ 1808.22 \\ \hline 
153 & \ 309.851  \\ \hline 
154 & \ 2.678\\ \hline 
160 & \ 1137.27
  \end{tabular}
 \end{minipage}%
 \begin{minipage}{0.5\linewidth}
  \caption{Periodic \textit{Browkin II} expansions within $1000$ steps in $\mathbb{Q}_{43}$.}
  \bigskip
  \centering
  \begin{tabular}{c|c}
   D & $(A_{1000}/B_{1000})^2$ \\
   \hline
6 & \ 5.9995  \\ \hline 
14 & \ 13.999  \\ \hline 
17 & \ 17.006 \\ \hline 
21 & \ 20.999  \\ \hline 
24 & \ 23.999  \\ \hline 
38 & \ 37.999  \\ \hline 
41 & \ 40.999  \\ \hline 
44 & \ 43.999 \\ \hline 
47 & \ 46.999  \\ \hline 
52 & \ 51.999  \\ \hline 
57 & \ 56.999  \\ \hline 
59 & \ 58.999 \\ \hline 
68 & \ 67.999  \\ \hline 
78 & \ 77.998  \\ \hline 
79 & \ 78.998  \\ \hline 
84 & \ 83.999  \\ \hline 
87 & \ 86.998  \\ \hline 
90 & \ 89.999  \\ \hline 
92 & \ 91.998 \\ \hline 
95 & \ 94.998  \\ \hline 
102 & \ 101.999  \\ \hline 
107 & \ 106.998  \\ \hline 
111 & \ 110.998  \\ \hline 
122 & \ 121.998  \\ \hline 
124 & \ 123.999  \\ \hline 
135 & \ 134.998 \\ \hline 
138 & \ 137.999 \\ \hline 
140 & \ 139.998  \\ \hline 
143 & \ 142.998 \\ \hline 
150 & \ 149.998  \\ \hline 
152 & \ 151.999 \\
  \end{tabular}
 \end{minipage}%
\end{table}

\newpage

\begin{table}[H]
 \begin{minipage}{0.5\linewidth}
  \caption{Non-periodic Algorithm \eqref{Alg: MR} expansions within $1000$ steps in $\mathbb{Q}_{43}$.}
  \bigskip
  \centering
  \begin{tabular}{c|c}
   D & $(A_{1000}/B_{1000})^2$\\
   \hline
10 & \ 101.090 \\ \hline 
11 & \ 675.849  \\ \hline 
13 & \ 331.167 \\ \hline 
15 & \ 57.4503 \\ \hline 
23 & \ 1301.16 \\ \hline 
31 & \ 220.368  \\ \hline 
35 & \ 60.478 \\ \hline 
40 & \ 240.740 \\ \hline 
53 & \ 300.467  \\ \hline 
54 & \ 358.488 \\ \hline 
56 & \ 569.003  \\ \hline 
58 & \ 84.0687  \\ \hline 
60 & \ 464.238 \\ \hline 
66 & \ 188.872  \\ \hline 
67 & \ 411.408 \\ \hline 
74 & \ 352.617  \\ \hline 
83 & \ 106.822  \\ \hline 
96 & \ 135.124  \\ \hline 
97 & \ 299.172  \\ \hline 
99 & \ 318.033  \\ \hline 
101 & \ 210.934  \\ \hline 
103 & \ 676.431\\ \hline 
109 & \ 445.940 \\ \hline 
110 & \ 133.839 \\ \hline 
117 & \ 389.821 \\ \hline 
126 & \ 224.244  \\ \hline 
127 & \ 325.751 \\ \hline 
130 & \ 9.8125  \\ \hline 
133 & \ 4.7093  \\ \hline 
139 & \ 162.889 \\ \hline 
142 & \ 216.787 \\ \hline 
145 & \ 34.7298 \\ \hline 
146 & \ 1808.222 \\ \hline 
153 & \ 309.851  \\ \hline 
154 & \ 2.6791  \\ \hline
160 & \ 1137.254 
  \end{tabular}
 \end{minipage}%
 \begin{minipage}{0.5\linewidth}
  \caption{Periodic Algorithm \eqref{Alg: MR} expansions within $1000$ steps in $\mathbb{Q}_{43}$.}
  \bigskip
  \centering
  \begin{tabular}{c|c}
   D & $(A_{1000}/B_{1000})^2$ \\
   \hline
6 & \ 5.9995  \\ \hline 
14 & \ 13.9995  \\ \hline 
17 & \ 17.0065  \\ \hline 
21 & \ 20.9993  \\ \hline 
24 & \ 23.9992  \\ \hline 
38 & \ 37.9998  \\ \hline 
41 & \ 40.9996 \\ \hline 
44 & \ 43.9993  \\ \hline 
47 & \ 46.9992 \\ \hline 
52 & \ 51.9999  \\ \hline 
57 & \ 56.9994 \\ \hline 
59 & \ 58.9992 \\ \hline 
68 & \ 67.9998 \\ \hline 
78 & \ 77.9989\\ \hline 
79 & \ 78.9983 \\ \hline 
84 & \ 83.9990 \\ \hline 
87 & \ 86.9985 \\ \hline 
90 & \ 89.9993 \\ \hline 
92 & \ 91.9987\\ \hline 
95 & \ 94.9981\\ \hline 
102 & \ 101.999 \\ \hline 
107 & \ 106.998 \\ \hline 
111 & \ 110.998  \\ \hline 
122 & \ 121.998 \\ \hline 
124 & \ 123.999 \\ \hline 
135 & \ 134.998\\ \hline 
138 & \ 137.999  \\ \hline 
140 & \ 139.998  \\ \hline 
143 & \ 142.998  \\ \hline 
150 & \ 149.998 \\ \hline 
152 & \ 151.999 
  \end{tabular}
 \end{minipage}%
\end{table}

\newpage

\section*{Acknowledgments}
The author is a member of GNSAGA of INdAM.\\

\end{document}